\documentclass{amsart}
\usepackage{mathptmx}

\usepackage{amssymb}
\usepackage{amsmath}

 \newtheorem{theo}{Theorem}[section]
 \newtheorem{lemm}[theo]{Lemma}

\newtheorem{prop}[theo]{Proposition}
\newtheorem{rema}[theo]{Remark}

\newcommand\NN{{\mathbb N}}
\newcommand\RR{{{\mathbb R}}}
\newcommand\ZZ{{\mathbb Z}}

\newcommand\TT{{{\mathbb T}}}
\def\SS {\mathbb{S}}

\def\la{\langle}
\def\ra{\rangle}

\let\a=\alpha
\let\b=\beta

\newcommand\cD{\mathcal D}
\newcommand\cE{{\mathcal E}}

\newcommand\cS{{\mathcal S}}
\newcommand\cM{{\mathcal M}}
\newcommand\cT{{\mathcal T}}

\newcommand\pa{\partial}

\begin{document}
\title[Bounded Solutions]
{Bounded  Solutions of the Boltzmann Equation\\
in the Whole Space}
\author{R. Alexandre}
\address{R. Alexandre,
\newline\indent
Department of Mathematics, Shanghai Jiao Tong University
\newline\indent
Shanghai, 200240, P. R. China,
and \newline\indent
IRENAV Research Institute, French Naval Academy
Brest-Lanv\'eoc 29290, France
}
\email{radjesvarane.alexandre@ecole-navale.fr}
\author{Y. Morimoto }
\address{Y. Morimoto, Graduate School of Human and Environmental Studies,
Kyoto University
\newline\indent
Kyoto, 606-8501, Japan} \email{morimoto@math.h.kyoto-u.ac.jp}
\author{S. Ukai}
\address{S. Ukai, 17-26 Iwasaki-cho, Hodogaya-ku, Yokohama 240-0015, Japan}
\email{ukai@kurims.kyoto-u.ac.jp}
\author{C.-J. Xu}
\address{C.-J. Xu, School of Mathematics, Wuhan University 430072,
Wuhan, P. R. China
\newline\indent
and \newline\indent
Universit\'e de Rouen, UMR 6085-CNRS,
Math\'ematiques
\newline\indent
Avenue de l'Universit\'e,\,\, BP.12, 76801 Saint
Etienne du Rouvray, France } \email{Chao-Jiang.Xu@univ-rouen.fr}
\author{T. Yang}
\address{T. Yang, Department of mathematics, City University of Hong Kong,
Hong Kong, P. R. China
\newline\indent
and \newline\indent
School of Mathematics, Wuhan University 430072,
Wuhan, P. R. China} \email{matyang@cityu.edu.hk}

\subjclass[2000]{35A01, 35A02, 35A09, 35S05, 76P05, 82C40}

\keywords{Boltzmann equation, non-cutoff cross section,
local existence, locally uniform Sobolev space, spatial behavior at infinity, 
pseudo-differential calculus.}

\maketitle

\date{empty}

\begin{abstract}
We construct bounded classical solutions of the Boltzmann equation
in the whole space  without  specifying
any limit behaviors at the spatial infinity and without assuming the smallness condition on initial data.
More precisely, we show that if the initial data
is non-negative and belongs to a uniformly local Sobolev space in the space variable 
 with Maxwellian type decay property in the velocity variable, 
then the Cauchy problem of the Boltzmann
equation possesses
a unique non-negative local solution in the same function
space,
 both for the cutoff and non-cutoff collision cross section  with
mild singularity.
The known solutions such as solutions on the torus (space periodic solutions) and in the vacuum (solutions
vanishing at the spatial infinity), and solutions in the whole space having a limit
equilibrium state at the spatial infinity
  are included in our category.
\end{abstract}


\section{Introduction}\label{s1}

Consider the Boltzmann
equation,
\begin{equation}\label{1.1}
f_t+v\cdot\nabla_x f=Q(f, f),
\end{equation}
where $f= f(t,x,v)$ is the density distribution function
of particles  with position
$x\in \RR^3$ and velocity $v\in \RR^3$ at time $t$.
The right hand side of (\ref{1.1}) is given by the
Boltzmann bilinear collision operator
\[
Q(g, f)=\int_{\RR^3}\int_{\mathbb S^{2}}B\left({v-v_*},\sigma
\right)
 \left\{g(v'_*) f(v')-g(v_*)f(v)\right\}d\sigma dv_*\,,
\]
which is well-defined for
 suitable functions $f$ and $g$ specified later. Notice that the
collision operator $Q(\cdot\,,\,\cdot)$ acts only on the velocity
variable $v\in\RR^3$. In the following discussion, we will use the
$\sigma-$representation, that is, for $\sigma\in \mathbb S^{2}$,
$$
v'=\frac{v+v_*}{2}+\frac{|v-v_*|}{2}\sigma,\,\,\, v'_*
=\frac{v+v_*}{2}-\frac{|v-v_*|}{2}\sigma,\,
$$
which give the relations between the pre- and post- collisional velocities.

It is well known that the Boltzmann equation is a fundamental
equation in statistical physics. For the mathematical theories on
this equation, we refer the readers to
\cite{Ce88,Cercignani-Illner-Pulvirenti,D-L,Grad,villani2}, and
the references therein also for the physics background.

In addition to the special bilinear structure of the collision
operator, the cross-section $B(v-v_*,\sigma)$
is a function of  only $|v-v_*|$ and $\theta$ where
\[
\cos\theta=\big<\frac{v-v_*}{|v-v_*|}, \sigma\big>, \,\quad
0\leq\theta\leq\frac{\pi}{2}.
\]
$B$ varies with different
physical assumptions on the particle interactions and it plays an
important role in the well-posedness theory for the Boltzmann equation.
In fact, except for the hard sphere model, for most of the other molecular
interaction potentials such as  the inverse power laws, the cross
section  $B(v-v_*,\sigma)$ has a non-integrable angular singularity.
For example, if the interaction potential obeys the inverse power law
 $r^{-(p-1)}$ for $2<
p<\infty$, where $r$ denotes the distance between
 two interacting  molecules,
the cross-section behaves like
\[
B(|v-v_*|, \sigma)\sim |v-v_*|^\gamma \theta^{-2-2s},
\]
with
\[
-3<\gamma=\frac{p-5}{p-1}<1,\,\,\, \,\,\,\,\,\,0<s=\frac{1}{p-1}<1\, .
\]
As usual, the hard and soft potentials correspond to $p>5$ and
$2<p<5$  respectively, and the Maxwellian potential corresponds to
$p=5$.

The  main consequence of the non-integrable singularity of
$B$ at $\theta=0$  is that it makes the collision operator $Q$ behave like
a pseudo differential operator, as
 pointed out by many authors, e.g. \cite{al-1,  Lions98, Pao, ukai}.
 To avoid this difficulty, Grad \cite{Grad} introduced
 an assumption to cutoff this singularity.
This was a substantial step made in the study of the Boltzmann
equation \eqref{1.1} and is now called Grad's angular cutoff assumption.

One of the main issues in the study of \eqref{1.1} is the existence theory of the solutions.
Many authors have developed various  methods for constructing
local and global solutions for different situations. Among them, the Cauchy problem has been studied most extensively for both cutoff and non-cutoff cases. 

An essential observation here is 
  that so far, all  solutions  for the Cauchy problem   have been constructed so as to 
satisfy  one of the following three
spatial behaviors at infinity;
$x$-periodic solutions (solutions on the torus,
\cite{ Grad, gr-st-2, ukai-1a}), solutions approaching an equilibrium
(\cite{amuxy3,amuxy3b,amuxy4-2,amuxy4-3, guo-1, liu-2, ukai-1b})
and solutions approaching 0 (solutions near vacuum, \cite{alex-two-maxwellian,amuxy-arma,al-3,
D-L}).
Notice that the solutions constructed  in \cite{guo-bounded} are also  solutions approaching an global equilibrium.   

However, it is natural to wonder if there are any other solutions 
behaving differently at $x$-infinity.
In fact the aim of the present paper is to show that the admissible limit behaviors are
not restricted to the above three behaviors. More precisely, 
we show that the Cauchy problem admits a very large solution space 
which includes not only the solutions of the above three types but also
the solutions having no specific limit behaviors such that  almost periodic solutions and
perturbative solutions of arbitrary bounded functions which are not necessarily equilibrium state.
This will be done for both cutoff and non-cutoff cases without the smallness condition
on initial data. 

The method developed in this paper  works for local existence theory. The global existence
in the same solution space is a big open issue and is  our current subject.
Also the present  method  works for the Landau equation but
since the extension is straightforward, the detail is omitted.

Our assumption on the cross section is as follows.
For the non-cutoff case  we assume as usual that $B$ takes the form
\begin{align}\label{B}
B(v-v_*,\sigma)=\Phi(|v-v_*|)b(\cos\theta),
\end{align}
in which it contains a kinetic part
\[
\Phi(|v-v_*|)=\Phi_\gamma(|v-v_*|)=|v-v_*|^\gamma,
\]
and a factor related to the collision angle with singularity,
\[
b(\cos\theta)\approx K\theta^{-2-2s}  \text{\ \ when\ \ } \theta\to 0+,
\]
for some constant $K>0$. For the cutoff case we assume that $b$ takes the form
\eqref{cutoff} or is bounded by it.

In order to introduce our working  function spaces, set
\[
\pa^\alpha_\beta=\pa^\alpha_x\pa^{\beta}_v, \qquad \alpha, \beta \in \NN^3.
\]
Let 
 $\phi_1=\phi_1(x)$ be a smooth cutoff function
\begin{align}\label{phi1}
\phi_1\in C^\infty_0(\RR^3), \quad 0\le \phi_1(x)\le 1, \quad \phi_1(x)=\left\{
\begin{array}{ll}
1, \ &|x|\le 1,
\\
0,  &|x|\ge 2.
\end{array}
\right.
\end{align}
Our  function space is  the uniformly local Sobolev space with respect to the space variable
and the usual Sobolev space with respect to the velocity variable with weight. 
More precisely, let $k, \ell \in\NN $
and $W_\ell=(1+|v|^2)^{\ell/2}$ be a weight function. We define
\begin{align}\label{ulS}
H^{k,\ell}_{\it ul}&(\RR^6)=\{g \ |\  \|g\|_{H^{k,\ell}_{\it ul}(\RR^6)}^2
\\&=\notag
\sum_{|\alpha+\beta|\le k}\ \sup_{a\in \RR^3}\int_{\RR^6} |\phi_1(x-a) W_\ell\pa^\alpha_\beta g(x,v)|^2dxdv<+\infty\}.
\end{align}
We will set $H^{k}_{\it ul}(\RR^6)=H^{k,0}_{\it ul}(\RR^6)$.

 The uniformly local Sobolev space was first introduced by Kato in \cite{kato}
as a space of functions of $x$ variable, and was used to develop
the local existence theory on
 the quasi-linear symmetric hyperbolic systems without specifying the limit behavior at infinity.

This space could be defined also by the cutoff function  $\phi_R(x)=\phi(x/R)$
for any $R>0$, but the choice of $R$ is not a matter. Indeed  let $R>1$. Then
we see that
\begin{align*}
\|g\|_{H^{k,\ell}_{\it ul}(\RR^6)}&
\le \sum_{|\alpha+\beta|\le k}\
\sup_{a\in\RR^3}\int_{\RR^6}|\phi_R(x-a)W_\ell\pa^\alpha_\beta g(x,v)|^2dxdv
\\&\le \notag
\sum_{|\alpha+\beta|\le k}\ 
\sum_{j\in \ZZ^3, |j|\le R}
\sup_{a\in\RR^3}\int_{\RR^6}|\phi_1(x-a-j)W_\ell\pa^\alpha_\beta g(x,v)|^2dxdv
\\&\le \notag
C R^3\|g\|_{H^{k,\ell}_{\it ul}(\RR^6)}.
\end{align*}
The case $0<R<1$ can be proved similarly. In the sequel, therefore, we fix $R=1$.

This space shares many important properties with the usual Sobolev space
such as the Sobolev embedding and hence  it is contained in the space of bounded functions
if $k>3$.
An important difference from the usual Sobolev space is that
no limit property is specified at $x$-infinity for the space \eqref{ulS}.

We shall consider the solutions satisfying
the Maxwellian type exponential decay in the velocity variable.
More precisely, set $\la v\ra=(1+|v|^2)^{1/2}$. For $k\in\NN$,  
our function space of initial data will be
$$
\cE^k_0(\RR^6)=\Big\{g\in\cD'(\RR^6_{x, v});\, \exists \,\rho_0>0\,
\, s. t. \,\, e^{\rho_0 <v>^2} g\in H^{k}_{\it ul}(\RR^6_{x, v}) \Big\},
$$
while the function space of solutions will be, for $T>0$,
\begin{eqnarray*}
{\mathcal E}^k([0,T]\times{\mathbb R}^6_{x, v})&=&\Big\{f\in
C^0([0,T];{\mathcal D}'({\mathbb R}^6_{x, v}));\, \exists \,\rho>0
\\
&&\hskip 0.5cm s. t. \,\, e^{\rho \langle v \rangle^2} f\in C^0([0,
T];\,\, H^{k}_{\it ul}({\mathbb R}^6_{x, v})) \Big\}.
\end{eqnarray*}
Our main result is stated as follows.
\begin{theo}\label{E-theo1}
 Assume that the cross section $B$ takes the form \eqref{B} with
$0<s<1/2$, $\gamma>-3/2$ and  $ 2s+\gamma< 1$.
If the initial data $f_0$ is non-negative
and belongs to the function space
$
\cE^{k_0}_0(\RR^6)$ for some  $ k_0 \in \NN, k_0\ge 4$, then, there exists
$T_* >0$ such that the  Cauchy
problem
\begin{equation}\label{1.1b}
\left\{\begin{array}{l}f_t+v\cdot\nabla_x f=Q(f, f),\\
f|_{t=0}=f_0,\end{array}\right.
\end{equation}
admits a non-negative  unique solution in the function space
${\mathcal E}^{k_0}([0,T_*]\times{\mathbb R}^6)$.
\end{theo}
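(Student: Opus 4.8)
The plan is to combine a weighted energy method in the uniformly local Sobolev space $H^{k_0}_{\it ul}$ with a linearized iteration scheme that propagates non-negativity. The first step is to remove the exponential velocity weight by passing to the new unknown $g=e^{\rho(t)\langle v\rangle^2}f$, where the exponent $\rho(t)=\rho_0-\lambda t$ is chosen \emph{decreasing}; then $g$ solves
\begin{equation*}
g_t+v\cdot\nabla_xg=\Gamma_\rho(g,g)+\dot\rho(t)\langle v\rangle^2 g,\qquad
\Gamma_\rho(g,h):=e^{\rho\langle v\rangle^2}Q\bigl(e^{-\rho\langle v\rangle^2}g,\;e^{-\rho\langle v\rangle^2}h\bigr).
\end{equation*}
Because the collision invariant gives $\langle v'\rangle^2+\langle v'_*\rangle^2=\langle v\rangle^2+\langle v_*\rangle^2$, the exponential weight inside $\Gamma_\rho$ collapses to a harmless factor $e^{-\rho\langle v_*\rangle^2}$ in the $v_*$-integrand, so that $\Gamma_\rho(g,g)$ carries at worst a polynomial-in-$v$ loss term of order $\gamma<1$. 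Since $\dot\rho<0$, the term $\dot\rho\langle v\rangle^2 g$ is dissipative and, in the energy estimate, yields a good term $\lambda\int\langle v\rangle^2|g|^2$ which dominates that loss (as $\gamma<2$) once $\lambda$ is taken large in terms of the size of the data; it will also absorb the unbounded factor $v$ produced by the transport commutator below.

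The heart of the matter is the a priori energy estimate in $H^{k_0}_{\it ul}$, uniformly in the localization parameter $a\in\RR^3$. Since $Q$, hence $\Gamma_\rho$, acts only on $v$, multiplication by $\phi_1(x-a)$ commutes with it and the $x$-localization is harmless for the collision term; the only genuinely non-commuting interaction with the cutoff comes from transport, where $[v\cdot\nabla_x,\phi_1(x-a)]=v\cdot(\nabla\phi_1)(x-a)$ is supported in the annulus $1\le|x-a|\le 2$ and is controlled, following Kato's device, by a finite sum of the localizations centered at the lattice points near $a$ (the factor $v$ being split between the $\lambda\int\langle v\rangle^2|g|^2$ good term and $\|g\|^2_{H^{k_0}_{\it ul}}$ itself). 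For each $\alpha,\beta$ with $|\alpha+\beta|\le k_0$ one applies $\pa^\alpha_\beta$, localizes by $\phi_1(x-a)$, pairs with $\phi_1(x-a)^2\pa^\alpha_\beta g$, and estimates the collision contribution using (i) the Leibniz expansion of $\pa^\alpha_\beta\Gamma_\rho(g,g)$ into terms $\Gamma_\rho(\pa g,\pa g)$ plus commutators, (ii) weighted trilinear upper bounds for $\Gamma_\rho$ whose right-hand sides involve an $L^\infty$ norm (controlled via $H^{k_0}_{\it ul}\hookrightarrow L^\infty$, valid since $k_0\ge 4>3$) times norms of $s$ fractional velocity derivatives, and (iii) the sub-elliptic \emph{coercivity} of the non-cutoff operator, which furnishes a negative term $-c\,\mathcal G(g)$, where $\mathcal G$ controls an extra $s$ velocity-derivatives and, via Young's inequality with a small absorption constant, soaks up the commutator losses of order up to $s$. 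Together with the algebra property of $H^{k_0}_{\it ul}$ this yields
\begin{equation*}
\frac{d}{dt}\,\|g(t)\|^2_{H^{k_0}_{\it ul}}+c\,\mathcal G\bigl(g(t)\bigr)\;\le\;C\bigl(1+\|g(t)\|_{H^{k_0}_{\it ul}}\bigr)\,\|g(t)\|^2_{H^{k_0}_{\it ul}},
\end{equation*}
from which a uniform bound on a short interval $[0,T_*]$ follows, with $T_*$ depending only on $\|e^{\rho_0\langle v\rangle^2}f_0\|_{H^{k_0}_{\it ul}}$ and $\rho_0$. The restrictions $0<s<1/2$, $\gamma>-3/2$ and $2s+\gamma<1$ enter precisely here: they keep the singularity mild, make $\Phi_\gamma$ manageable against the weight, and ensure the order-$s$ gain dominates the derivative losses, so that the estimate closes.

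With the estimate established, I would construct the solution by the standard linearized iteration on the $g$-equation: set $g^{0}\equiv g_0:=e^{\rho_0\langle v\rangle^2}f_0$ and let $g^{n+1}$ solve
\begin{equation*}
\partial_tg^{n+1}+v\cdot\nabla_xg^{n+1}=\Gamma_\rho(g^{n},g^{n+1})+\dot\rho(t)\langle v\rangle^2 g^{n+1},\qquad g^{n+1}|_{t=0}=g_0 .
\end{equation*}
The same energy estimate, applied to these linear problems, bounds $\{g^{n}\}$ uniformly in $n$ in $C^0([0,T_*];H^{k_0}_{\it ul})$; the difference $g^{n+1}-g^{n}$ satisfies a linear equation whose estimate in the lower-order norm $H^{k_0-1}_{\it ul}$ shows that $\{g^{n}\}$ is Cauchy there, hence converges to a limit $g$, and by weak-$*$ lower semicontinuity $g\in C^0([0,T_*];H^{k_0}_{\it ul})$; uniqueness follows from the same difference estimate. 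Then $f:=e^{-\rho\langle v\rangle^2}g\in\cE^{k_0}([0,T_*]\times\RR^6)$ solves \eqref{1.1b}, and it is non-negative: since $g^{n}\ge 0$ makes $\Gamma_\rho(g^{n},\cdot)$ generate a positivity-preserving evolution — most transparently after a cutoff regularization $b\mapsto b_\delta$ of the kernel, using the gain–loss splitting and the Duhamel formula along characteristics with the non-negative damping coefficient, then letting $\delta\to0$ with the $\delta$-uniform bounds — non-negativity propagates through the iteration and passes to the limit.

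The main obstacle is the second step: the weighted energy estimate for the non-cutoff operator in the uniformly local framework. One has to control simultaneously the singular velocity integral, the exact balance between the loss of velocity decay and the $\dot\rho\langle v\rangle^2 g$ gain (using energy conservation to cancel the Maxwellian weight and bounding every remainder), and the deployment of the coercive order-$s$ gain to absorb both the commutators $[\pa^\alpha_\beta,\Gamma_\rho]$ and $[v\cdot\nabla_x,\phi_1(x-a)]$; it is the interplay of these three mechanisms, under $0<s<1/2$ and $2s+\gamma<1$, that makes the estimate close. Once it is in place, the iteration, the passage to the limit, uniqueness and non-negativity are comparatively routine.
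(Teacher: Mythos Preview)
Your overall architecture is the paper's: set $g=e^{(\rho_0-\kappa t)\langle v\rangle^2}f$, use the dissipative term $-\kappa\langle v\rangle^2 g$ to compensate the polynomial weight loss of the collision operator (energy conservation collapsing the exponential weight to $e^{-\rho\langle v_*\rangle^2}$), and close a differential inequality for $\|g(t)\|_{H^{k_0}_{\it ul}}$ uniformly in the localization center $a$. The handling of the transport commutator with $\phi_1(x-a)$ via a larger cutoff $\phi_2$ and absorption of the stray factor $|v|$ into the $\kappa\langle v\rangle^2$ good term is also exactly what the paper does. Two points in your execution, however, differ from the paper and the second is a real gap.

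First, the paper does \emph{not} iterate directly on the non-cutoff equation. It first replaces $b$ by a bounded $b_\varepsilon$ and runs the gain/loss iteration
\[
\partial_t g^{n+1}+v\cdot\nabla_x g^{n+1}+\kappa\langle v\rangle^2 g^{n+1}
=\Gamma^{t,+}_\varepsilon(g^n,g^n)-\Gamma^{t,-}_\varepsilon(g^n,g^{n+1}),
\]
in which $g^{n+1}$ enters only through the local multiplicative loss $g^{n+1}L_\varepsilon(g^n)$; the Duhamel formula along characteristics is then explicit and non-negativity of $g^{n+1}$ is immediate. Afterwards a uniform-in-$\varepsilon$ estimate is proved and one lets $\varepsilon\to0$. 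Your scheme $\Gamma_\rho(g^n,g^{n+1})$ keeps the singular kernel and places $g^{n+1}$ in the second slot of $Q$, so each linear step is itself a non-cutoff pseudodifferential equation whose solvability you do not address; the positivity-preservation you invoke also becomes opaque without the cutoff, which you mention only parenthetically.

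Second, and more importantly, your closure mechanism is not how the estimate actually closes here. You rely on ``sub-elliptic coercivity'' of $\Gamma_\rho(g^n,\cdot)$ to furnish an $H^s_v$ gain that absorbs order-$s$ commutators. Such an $H^s$ lower bound requires a pointwise \emph{lower} bound on $g^n$ (of entropy-dissipation type), which nothing in your iteration guarantees; mere non-negativity is not enough. The paper never uses coercivity that way. For the off-diagonal terms $|\alpha_1+\beta_1|\le k-1$ it applies the trilinear upper bound (Proposition~\ref{upperQ}) with $m=s$, which puts only $H^{2s}_v$ on the inner factor, and since $0<s<1/2$ one has $|\alpha_1+\beta_1|+2s\le k$, so this is absorbed by the integer norm $H^{k}_{\it ul}$ with no smoothing needed. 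The only place a ``coercive'' term is used is the diagonal piece $(Q(\mu F,H),H)=-\tfrac12 D+A_{101}$ with $D\ge0$ as in \eqref{D}; there only the sign of $D$ (and $g^\varepsilon\ge0$) is used, to absorb the single commutator term $K_2$ coming from $[W_\ell,Q]$, via Cauchy--Schwarz against $D$. So under $0<s<1/2$ and $2s+\gamma<1$ the estimate closes by index bookkeeping and the moment gain from $\kappa\langle v\rangle^2$, not by an $H^s$ smoothing gain; your plan as written would not close without an unjustified lower bound on the iterates.
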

 \begin{rema}\label{rem13}
For the cutoff case, if
$\gamma>-3/2$, the same theorem and the same proof are valid
because  our assumption is that the cross-section $b$ is given by \eqref{cutoff} or is bounded by it.
In the sequel,  therefore, we
consider  the non-cutoff case only.
\end{rema}

Before closing this section we give some comparison of  this paper and our recent paper
\cite{amuxy-arma}. First, we shall compare the existence results.
Both papers solve the same modified Cauchy problem
\eqref{E-Cauchy-B}.
Thus, we shall compare Theorem \ref{E-theo-0.2} of this
 paper 
and Theorem 4.1 of \cite{amuxy-arma}. 

 The solution space in 
\cite{amuxy-arma} is the usual weighted Sobolev space $H^k_\ell(\RR^6_{x,v})$ , 
so that  Theorem 4.1 of \cite{amuxy-arma} gives 
solutions vanishing at $x$-infinity (solutions near vacuum).  And it is easy to see
that even if 
 the space is replaced by $H^k_\ell(\TT^3_x\times \RR^3_v)$,
the proof of \cite{amuxy-arma} is still
valid and gives rise to $x$-periodic solutions (solutions on torus). The same space was used also
in \cite{gr-st-2}. 
On the other hand, it is clear that 
the space $H^{k,\ell}_{\it ul}(\RR^6)$
defined by \eqref{ulS}, the locally uniform Sobolev space with respect to $x$-variables, 
contains, as its subset,
not only the spaces $H^k_\ell(\RR^6)$ and $H^k_\ell(\TT^3\times\RR^3)$ 
but also the
set of functions having the form $G=\mu+\mu^{1/2}g$. If $\mu$ is a
global Maxwellian, then we have well-known
perturbative solutions of equilibrium
as in \cite{amuxy3b, amuxy4-2, amuxy4-3, ukai-1b}, but more generally
$G$ can be any bounded functions. Hence, Theorem \ref{E-theo-0.2}
gives, for example, almost periodic solutions, solutions having different limits at $x$-
infinity like shock profile solutions which attain different equilibrium at the right and left infinity,
and bounded solution behaving in more general way at $x$-infinity.
Thus the present paper extends extensively the function space of admissible solutions. This is an
essential difference between the two papers.
 
Another big difference is the collision cross section. The present paper deals with
the original kinetic factor $\Phi(z)=|z|^\gamma$ without regularizing the
singularity at $z=0$ whereas   \cite{amuxy-arma} considers only a
regularized one of the form $\Phi(z)=(1+|v|^2)^{\gamma/2}$. This regularization
simplifies drastically the estimates of collision operator $Q$. 
 For example the proof of Proposition 4.4 with non-regularized kinetic factor is 
far more subtle than (2.1.2) of \cite{amuxy-arma}.  Also the case $\gamma<0$
should be handled separately from the case $\gamma\ge 0$
if the  kinetic factor is not regularized, and the range of admissible values of $\gamma$
is restricted in this paper.

The rest of this paper is organized as follows.
In the next section we first rewrite the Cauchy problem
\eqref{1.1b} by the one involving the wight function of the
time-dependent Maxwellian type and approximate it by introducing the cutoff cross section.
After establishing the upper bounds of the cutoff collision operator,
we introduce  linear iterative Cauchy problems and show that the
iterative solutions converge to the solutions to
the cutoff  Cauchy problem.
In Section 3 we derive the a priori  estimates satisfied by
the solutions to the cutoff Cauchy problem uniformly with respect to the cutoff
parameter.  The estimates thus obtained are enough to
conclude the local existence and hence to lead to Theorem \ref{E-theo1}.
The last section is devoted to the proof of Lemma \ref{lem410} which is
essential for the uniform estimate established in Section 3.

\section{Construction of Approximate Solutions}\label{section2}
\setcounter{equation}{0}

As will be seen later (Lemma \ref{E-lemm3.111} and Theorem \ref{upperQ}), the non-linear collision operator
induces a weight loss, which implies that it cannot be Lipschitz continuous
so that the usual iteration procedure is not valid for constructing
local solutions.
This difficulty can be overcome, however, by introducing
 weight functions in $v$ of time-dependent Maxwellian type, developed previously in
\cite{amuxy-arma, ukai, ukai-2}. Indeed it compensates the weight loss  by
producing
an extra gain term of one order higher weight in the velocity variable at the
expense of the loss of the decay order of the time dependent Maxwellian-type weight.
\subsection{Modified Cauchy Problem}\label{E-s0} \setcounter{equation}{0}
More precisely,
we set, for any $\kappa, \rho>0$,
\[
T_0=\rho/(2\kappa),
\]
and put
\[
\mu_\kappa(t)=\mu(t,v)=e^{-(\rho-\kappa t)(1+ |v|^2)},
\]
and
$$
f=\mu_\kappa(t) g, \,\,\, \quad \Gamma^t(g, g)=\mu_\kappa(t)^{-1}Q(\mu_\kappa(t) g,\,
\mu_\kappa(t) g)
$$
for $t\in [0,T_0]$.
Then the Cauchy problem (\ref{1.1b}) is reduced to
\begin{equation}\label{E-Cauchy-B}
\left\{\begin{array}{l}
g_t+v\cdot\nabla_x g\  +\kappa (1+ |v|^2) g=\Gamma^t(g, g),\\
g|_{t=0}=g_0.
\end{array}\right.
\end{equation}

Define
\begin{align*}
 \cM^{k, \ell}&(]0,T[\times{\mathbb R}^6))
=\{
 g \ | \ \|g\|_{ \cM^{k, \ell}(]0,T[\times{\mathbb R}^6))}
 \\&=
 \sum_{|\alpha+\beta|\le k}\sup_{a\in\RR^3}
\int_{]0,T[\times \RR^6} |\phi_0(x-a) W_\ell\pa^\alpha_\beta g(x,v)|^2dtdxdv<+\infty\}.
\end{align*}
Our existence theorem can be stated as follows

\begin{theo}\label{E-theo-0.2}
Assume that  $0<s<1/2,\, \gamma >-3/2$ and $ 2s+\gamma<1$. Let $\kappa, \rho > 0$ and  let
$
g_0 \in H^{k,\ell}_{\it ul} ({\mathbb R}^6 )$, $g_0\geq 0$ for some $k\geq
{4}$ and $\ell\geq 3$. Then there exists $T_* \in ]0, T_0]$ such that the Cauchy problem
(\ref{E-Cauchy-B}) admits a unique  non-negative solution satisfying
\[
g \in C^0 ([0,T_*];\,\, H^{k,\ell}_{\it ul}({\mathbb R}^6))\bigcap\,\,
 \cM^{k, \ell+1}(]0,T_*[\times{\mathbb R}^6))\,. 
\]
\end{theo}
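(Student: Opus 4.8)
The strategy is the standard one for local existence of nonlinear evolution equations: regularize, construct approximate solutions by linear iteration, derive uniform a priori estimates, and pass to the limit. First I would replace the singular cross section by a cutoff one, writing $b^n(\cos\theta)$ for a suitable truncation of $b$ at angle $\theta\sim 1/n$ (or truncating $\Phi_\gamma$ near $z=0$ when $\gamma<0$), and denote by $\Gamma^t_n$ the corresponding modified collision operator. For the cutoff operator one has the upper bounds announced in the excerpt (Lemma \ref{E-lemm3.111}, Theorem \ref{upperQ}), which make $\Gamma^t_n$ a bounded bilinear map on the relevant weighted spaces, at the cost of one extra weight in $v$; the time-dependent Maxwellian weight $\mu_\kappa(t)$ is precisely what absorbs this loss, since the linear term $\kappa(1+|v|^2)g$ on the left of \eqref{E-Cauchy-B} produces a gain of one order of $\la v\ra$ in the energy estimate. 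With the cutoff in place I would set up the iteration $g^{n,0}=g_0$ (or a mollification thereof) and, given $g^{n,j}$, define $g^{n,j+1}$ as the solution of the linear transport equation
\[
\partial_t g^{n,j+1}+v\cdot\nabla_x g^{n,j+1}+\kappa(1+|v|^2)g^{n,j+1}=\Gamma^t_n(g^{n,j},g^{n,j+1}),
\]
which is solvable in $C^0([0,T];H^{k,\ell}_{\it ul})\cap\cM^{k,\ell+1}$ by the method of characteristics together with the positivity-preserving structure (so that $g^{n,j+1}\ge0$ as long as $g^{n,j}\ge0$, using that the loss term in $\Gamma^t_n$ has the right sign).

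The heart of the argument is the \emph{uniform} a priori estimate: for each fixed $n$ one shows that on a time interval $[0,T_*]$ with $T_*$ depending only on $\|g_0\|_{H^{k,\ell}_{\it ul}}$, $\kappa$, $\rho$ (and \emph{not} on $n$ or $j$), the iterates stay in a fixed ball of $C^0([0,T_*];H^{k,\ell}_{\it ul})\cap\cM^{k,\ell+1}$. The key point is that the uniformly local norm is handled exactly as a genuine Sobolev norm: multiply the equation for $\pa^\alpha_\beta g$ by $\phi_1(x-a)^2 W_\ell^2\pa^\alpha_\beta g$, integrate in $x,v$, take the supremum over $a\in\RR^3$, and sum over $|\alpha+\beta|\le k$. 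The transport term $v\cdot\nabla_x$ contributes a harmless commutator with $\phi_1(x-a)$ that is controlled by the norm itself (this is the classical Kato mechanism), the linear term $\kappa(1+|v|^2)$ yields a coercive $\cM^{k,\ell+1}$ contribution on the left, and the nonlinear term is estimated by the cutoff upper bounds: schematically, the trilinear form $\la\Gamma^t_n(g,g),g\ra_{H^{k,\ell}_{\it ul}}$ is bounded by $C\|g\|_{H^{k,\ell}_{\it ul}}\big(\|g\|_{H^{k,\ell}_{\it ul}}^2+\|g\|_{\cM^{k,\ell+1}}^2\big)$ with $C$ independent of $n$, the extra $\la v\ra$-weight on the right being absorbed by the $\kappa$-term. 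A Gronwall/continuity argument then closes the estimate and, crucially, shows the iteration is a contraction in a lower-order norm (e.g. $H^{k-1,\ell-?}_{\it ul}$ or $L^2$-type), so that $g^{n,j}\to g^n$ as $j\to\infty$, giving a solution $g^n$ of the cutoff problem with bounds uniform in $n$. Finally, the uniform bounds allow extraction of a weakly-$*$ convergent subsequence $g^n\rightharpoonup g$; one checks that $\Gamma^t_n(g^n,g^n)\to\Gamma^t(g,g)$ in the sense of distributions using the uniform higher-order control (which upgrades weak to strong convergence in lower norms), that the limit retains the regularity $C^0([0,T_*];H^{k,\ell}_{\it ul})\cap\cM^{k,\ell+1}$ and non-negativity, and then undoes the substitution $f=\mu_\kappa(t)g$ if one wants Theorem~\ref{E-theo1}. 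Uniqueness follows from the same contraction estimate applied directly to the difference of two solutions of \eqref{E-Cauchy-B}.

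The main obstacle, and the place where the non-regularized kinetic factor $\Phi_\gamma(z)=|z|^\gamma$ with $\gamma$ possibly negative really bites, is obtaining the trilinear nonlinear estimate with a constant independent of the cutoff parameter $n$. When $\gamma<0$ the factor $|v-v_*|^\gamma$ is singular as $v\to v_*$, so the collision integral is not a simple convolution bound; one must exploit cancellations between the gain and loss terms and the extra angular integrability provided by the $\la v\ra^{\ell+1}$ weight together with the restriction $2s+\gamma<1$, $\gamma>-3/2$, $s<1/2$ (these are exactly the thresholds that make the commutator and the singular convolution estimates work). This is the content of the upper-bound lemmas invoked from Section 3 and of Lemma~\ref{lem410}, whose proof is deferred to the last section; it is flagged in the introduction as being "far more subtle" than the corresponding estimate in \cite{amuxy-arma}. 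Everything else — the Kato-type handling of the uniformly local norm, the transport solvability, the positivity, the limiting procedure — is by now routine once that estimate is in hand.
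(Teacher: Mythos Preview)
Your overall strategy---cutoff approximation, linear iteration for the cutoff problem, uniform-in-cutoff energy estimates driven by the coercive $\kappa\la v\ra^2$ term, compactness to remove the cutoff, and the identification of Lemma~\ref{lem410} as the crux---matches the paper's proof. Two points deserve correction, however.

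First, your linearization $\Gamma^t_n(g^{n,j},g^{n,j+1})$ keeps the full bilinear operator with the first argument frozen. Its gain part then contains the nonlocal unknown $(g^{n,j+1})'$, so the resulting equation is \emph{not} solvable by ``the method of characteristics,'' and non-negativity is not transparent. The paper instead splits gain and loss and iterates as
\[
\partial_t g^{n+1}+v\cdot\nabla_x g^{n+1}+\kappa\la v\ra^2 g^{n+1}
=\Gamma^{t,+}_\varepsilon(g^n,g^n)-\Gamma^{t,-}_\varepsilon(g^n,g^{n+1}),
\]
so that $g^{n+1}$ appears only multiplicatively (through $L_\varepsilon(g^n)g^{n+1}$). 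This yields an explicit mild formula along characteristics, from which $g^{n+1}\ge 0$ is immediate when $g^n\ge 0$ and $g_0\ge 0$. Your scheme could be repaired, but as written the positivity claim is unjustified.

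Second, your schematic bound $\big|\la\Gamma^t_n(g,g),g\ra\big|\lesssim \|g\|_{H^{k,\ell}_{\it ul}}\big(\|g\|_{H^{k,\ell}_{\it ul}}^2+\|g\|_{\cM^{k,\ell+1}}^2\big)$ is too weak: the term $\|g\|_{H^{k,\ell}_{\it ul}}\|g\|_{\cM^{k,\ell+1}}^2$ cannot be absorbed by $\kappa\|g\|_{\cM^{k,\ell+1}}^2$ without a smallness assumption, which the theorem explicitly avoids. The paper's Lemma~\ref{lem410} gives instead $|(\Xi_1,h^{\alpha,\beta}_\ell)|\lesssim \|g\|_{H^{k,\ell}_{\it ul}}^2\sum\|h^{\alpha',\beta'}_{\ell+1}\|$, i.e.\ only a \emph{single} factor of the higher-weight quantity. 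Young's inequality then yields $C_\kappa\|g\|_{H^{k,\ell}_{\it ul}}^4+\tfrac{\kappa}{2}\sum\|h^{\alpha',\beta'}_{\ell+1}\|^2$, and a nonlinear Gronwall closes without smallness. This is not a cosmetic point: getting only one $\ell+1$ factor is exactly what the delicate analysis in Section~4 (splitting $A_1,A_2$, cancellation lemma, the nonnegative $D$ term, Proposition~\ref{upperQ}) is designed to achieve.

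Minor differences: the paper contracts the cutoff iterates in the \emph{same} norm $Y$ rather than a lower one, and uniqueness for the non-cutoff limit is quoted from \cite{amuxy4-4} rather than rederived.
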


 The strategy of proof
is
in the same spirit as in \cite{amuxy-arma}. That is, first,
 approximate the non-cutoff cross-section by a family of
cutoff cross-sections and construct the corresponding solutions
by a sequence of iterative linear equations. Then the existence of solutions to these approximate linear equations and by obtaining a
uniform estimate on these solutions
 with respect to the cutoff parameter
in the uniformly local Sobolev space, the compactness argument will
lead to the convergence of the approximate
solutions to the desired solution for the original problem.

\subsection{Cutoff Approximation}

Recall that the cross-section takes the form \eqref{B}.
For $0<\varepsilon<\,<1$,
 we approximate (cutoff)  the cross-section by
\begin{align}\label{cutoff}
b_\varepsilon(\cos\theta)=\left\{\begin{array}{l} b(\cos\theta),
\,\,\,\,\,\mbox{if}\,\,\,|\theta|\geq 2\varepsilon,\\
b(\cos\varepsilon), \,\,\,\,\,\mbox{if}\,\,\,|\theta|\leq
2\varepsilon.
\end{array}\right.
\end{align}
Denote  the corresponding cutoff cross-section by $B_\varepsilon=\Phi(v-v_*)b_\varepsilon(\cos\theta)$
and the collision operator by $\Gamma^t_\varepsilon(g,\,g)$.
We shall establish a
 upper weighted estimate on the cutoff  collision operator
in the uniformly local Sobolev space $H^{k,\ell}_{\it ul}(\RR^6).$
\begin{lemm}\label{E-lemm3.111}
Let  $\, \gamma>-3/2 $.
Then for any $\varepsilon>0,\, k\geq 4,\, l\geq 0$,
and
for any $U,V$
belonging to $H^{k,\ell}_{\it ul}(\RR^6)$, it holds that
\[
\Gamma_\varepsilon^t(U,\, V)\in H^{k,\ell}_{\it ul}(\RR^6)
\]
with
\begin{equation}\label{E-uper-estimate}
\|\Gamma_\varepsilon^t(U,\, V)\|_{H^{k,\ell}_{\it ul}(\RR^6)}\leq C
\|U\|_{H^{k,\ell+\gamma^+}_{\it ul}(\RR^6)} \|V\|_{H^{k,\ell+\gamma^+}_{\it ul}(\RR^6)},
\quad 0\le t\le T_0,
\end{equation}
for some
$C>0$ depending  on $\varepsilon,\, k,\, \ell$ as well as $\rho$, $\kappa$.
\end{lemm}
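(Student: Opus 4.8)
The plan is to prove the weighted upper bound for the \emph{cutoff} collision operator $\Gamma_\varepsilon^t$ directly, exploiting the fact that once $b$ is replaced by the bounded $b_\varepsilon$ the angular integral is harmless and only the kinetic factor $\Phi_\gamma(z)=|z|^\gamma$ and the Maxwellian weight $\mu_\kappa(t)$ require care. First I would write
\[
\Gamma_\varepsilon^t(U,V)(v)=\int_{\RR^3}\int_{\SS^2}B_\varepsilon(v-v_*,\sigma)\,\mu_\kappa(t)^{-1}
\bigl\{\mu_\kappa(t,v'_*)U(v'_*)\,\mu_\kappa(t,v')V(v')-\mu_\kappa(t,v_*)U(v_*)\,\mu_\kappa(t,v)V(v)\bigr\}\,d\sigma\,dv_*,
\]
and use the elementary energy identity $|v'|^2+|v'_*|^2=|v|^2+|v_*|^2$ to see that $\mu_\kappa(t,v')\mu_\kappa(t,v'_*)=\mu_\kappa(t,v)\mu_\kappa(t,v_*)$, so that the weight factors in the gain and loss terms recombine into $\mu_\kappa(t,v_*)$ times a genuine gain/loss pair $U(v'_*)V(v')-U(v_*)V(v)$, leaving only a residual weight $e^{(\rho-\kappa t)(1+|v_*|^2)}$ which, on $0\le t\le T_0=\rho/(2\kappa)$, is bounded by $e^{\rho(1+|v_*|^2)}$ — a $v_*$-integrable factor once paired with $W_\ell$-weighted velocity decay. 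This is the mechanism mentioned before Lemma~\ref{E-lemm3.111}: the Maxwellian weight loss is absorbed at the cost of a constant $C=C(\varepsilon,k,\ell,\rho,\kappa)$.

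Next I would apply $\pa^\alpha_\beta=\pa^\alpha_x\pa^\beta_v$ to $\Gamma_\varepsilon^t(U,V)$. The $x$-derivatives simply distribute onto $U$ and $V$ by the Leibniz rule since the collision kernel acts only in $v$; the $v$-derivatives are handled by the classical change of variables $v_*\mapsto v'$ (and $v_*\mapsto v'_*$) together with the pre-post collisional symmetry, so that $\pa_v^\beta$ falling on $V(v')$ or $U(v'_*)$ produces derivatives of $U,V$ of total order $\le|\beta|$ times polynomially bounded Jacobian and angular factors; here the cutoff is crucial because it keeps $\int_{\SS^2}b_\varepsilon\,d\sigma<\infty$ and all its consequences finite. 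Combining these, $W_\ell\,\pa^\alpha_\beta\Gamma_\varepsilon^t(U,V)$ is pointwise dominated by a finite sum of terms of the form
\[
\int_{\RR^3}\int_{\SS^2}|v-v_*|^\gamma\,b_\varepsilon(\cos\theta)\,e^{\rho(1+|v_*|^2)}\,W_\ell(v)\,
|\pa^{\alpha_1}_{\beta_1}U(\cdot)|\,|\pa^{\alpha_2}_{\beta_2}V(\cdot)|\,d\sigma\,dv_*,
\]
with $|\alpha_1|+|\alpha_2|\le|\alpha|$, $|\beta_1|+|\beta_2|\le|\beta|$, evaluated at the appropriate collisional arguments. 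For the kinetic factor I would split $|v-v_*|^\gamma$ according to $\gamma\ge 0$ or $\gamma<0$: when $\gamma\ge 0$ use $|v-v_*|^\gamma\le C(\langle v\rangle^\gamma+\langle v_*\rangle^\gamma)$ so the weight $W_\ell$ is upgraded to $W_{\ell+\gamma}$, i.e.\ $\gamma^+=\gamma$; when $-3/2<\gamma<0$ the singularity $|v-v_*|^\gamma$ at $v=v_*$ is locally integrable in $v_*$ in $\RR^3$ (since $\gamma>-3$), absorbed against the rapidly decaying $e^{\rho(1+|v_*|^2)}$, and $\gamma^+=0$ — this is exactly the place where the restriction $\gamma>-3/2$ (in fact $\gamma>-3$ would suffice for this lemma alone) enters and where, as the authors note, the non-regularized kinetic factor forces the two cases to be treated separately.

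Finally I would pass to the uniformly local $H^{k,\ell}_{\it ul}$ norm. Multiplying the pointwise bound by the localizing cutoff $\phi_1(x-a)$, taking $L^2(dx\,dv)$ norms, supremum over $a$, and using that $x$-translation of $U,V$ only shuffles the finitely many overlapping unit cubes (the same combinatorial fact used in the excerpt to show $R=1$ is immaterial), the convolution-type $v_*$-integral is estimated by Young's / Cauchy–Schwarz inequality in $v$, producing the product $\|U\|_{H^{k,\ell+\gamma^+}_{\it ul}}\|V\|_{H^{k,\ell+\gamma^+}_{\it ul}}$; the hypothesis $k\ge 4>3$ guarantees, via the Sobolev embedding for $H^{k}_{\it ul}$ mentioned earlier, that the lower-order factor can be put in $L^\infty_{x,v}$ so that the bilinear estimate closes. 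The main obstacle — modest here but genuine — is the careful bookkeeping of the change of variables $v_*\mapsto v'$ under $v$-differentiation together with the $\gamma<0$ singularity: one must verify that after all derivatives land, every residual factor is either bounded, polynomially bounded (hence absorbed by the Maxwellian), or locally integrable, and that no derivative ever falls on $\mu_\kappa(t)^{-1}$ in a way that destroys the weight cancellation. All constants are allowed to blow up as $\varepsilon\to0$, which is why this lemma is only a preliminary step; the $\varepsilon$-uniform estimates are the subject of Section~3.
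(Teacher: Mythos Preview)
Your overall architecture is right, but two concrete points diverge from the paper and would cause trouble as written. First, a sign slip: after the energy identity the factor that remains is $\mu_\kappa(t,v_*)=e^{-(\rho-\kappa t)(1+|v_*|^2)}$, a \emph{decaying} Gaussian in $v_*$ (bounded above by $e^{-\rho\langle v_*\rangle^2/2}$ on $[0,T_0]$). You wrote the exponential with a positive exponent and then called it ``$v_*$-integrable once paired with $W_\ell$-weighted decay,'' which is false for a growing exponential; the whole point of the $\cT_\varepsilon$ rewriting is that this Gaussian supplies the $v_*$-decay for free, and the paper uses $|\pa^{\beta_3}_v\mu(t,v)|\le C_{\rho,k}e^{-\rho\langle v\rangle^2/4}$ throughout.

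Second, and more substantively, your handling of $v$-derivatives is not the paper's and is riskier. The paper does \emph{not} change variables $v_*\mapsto v'$ before differentiating; it uses the translation invariance of $\cT_\varepsilon$ in $v$ (since $B_\varepsilon$ depends only on $v-v_*$ and $\sigma$) to get a clean Leibniz formula
\[
\partial^\alpha_\beta\,\cT_\varepsilon(U,V,\mu(t))
=\sum C_{\alpha_1,\alpha_2,\beta_1,\beta_2,\beta_3}\,
\cT_\varepsilon\bigl(\partial^{\alpha_1}_{\beta_1}U,\ \partial^{\alpha_2}_{\beta_2}V,\ \partial^{\beta_3}_v\mu(t)\bigr),
\]
so no derivative ever touches $B_\varepsilon$; in particular you never face $\partial_v|v-v_*|^\gamma\sim|v-v_*|^{\gamma-1}$ or ``Jacobian and angular factors.'' After this, the paper localizes via the identity $\phi_1(x-a)=\phi_1(x-a)\phi_2(x-a)$ (placing $\phi_1$ and $\phi_2$ on the two factors --- this is the device you only gesture at), applies Cauchy--Schwarz in $(v_*,\sigma)$ to the gain term, and uses $\int|v-v_*|^{2\gamma}e^{-\rho|v_*|^2/4}dv_*\lesssim\langle v\rangle^{2\gamma}$ together with the unit-Jacobian change $(v,v_*,\sigma)\to(v',v'_*,\sigma')$. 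That Cauchy--Schwarz is exactly where $\gamma>-3/2$ (not merely $\gamma>-3$) enters, so your parenthetical that $-3$ would suffice is not supported by this argument. Finally, the Sobolev step is $H^2_x\hookrightarrow L^\infty_x$ applied to the factor with $\le 2$ derivatives (keeping $L^2_v$), not a full $L^\infty_{x,v}$ embedding.
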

\begin{proof}
First, for simplicity of  notations,  denote $\mu_\kappa(t)$ by $\mu(t)$
without any confusion. By using the collisional energy conservation,
$$
|v'_*|^2+|v'|^2=|v_*|^2+|v|^2,
$$
we have $\mu_*(t)=\mu^{-1}(t)\,\mu'_*(t)\,\mu'(t)$. Then for
suitable functions $U, V$, it holds that
\begin{align}\notag
\Gamma_\varepsilon^t&(U,\,
V)(v)
\\&=\notag
\mu^{-1}(t,v)\iint_{\RR^3_{v_*}\times\mathbb S^{2}_{\sigma}}
B_\varepsilon(v-v_*,\, \sigma) \big(\mu'_*(t) U'_* \mu'(t) V'-
\mu_*(t) U_*
\mu(t) V\big) d v_* d \sigma
\\
&=\iint_{\RR^3_{v_*}\times\mathbb S^{2}_{\sigma}}
B_\varepsilon(v-v_*,\, \sigma)\mu_*(t)\, \big( U'_*  V'-  U_* V\big)
d v_* d \sigma\label{E-3.105}
\\&={\cT}_\varepsilon(U,\, V,\, \mu(t))\notag.
\end{align}
Then we have by the Leibniz formula
in the $x$ variable and by the translation invariance property in the $v$
variable that for any $\alpha, \beta\in\NN^3$,
\begin{align*}
\partial^\alpha_\beta&\Gamma_\varepsilon^t(U,\,\,
V)
 =\sum_{\begin{subarray}{l}\alpha_1+\alpha_2=\alpha;\,
\\
\beta_1+\beta_2+\beta_3=\beta
\end{subarray}}
C_{\alpha_1, \alpha_2, \beta_1,
\beta_2,
\beta_3}{\cT}_\varepsilon(\partial^{\alpha_1}_x\partial^{\beta_1}_v
U,\,\,\partial^{\alpha_2}_x\partial^{\beta_2}_v V,\,\,
\partial^{\beta_3}_v\mu(t)
).
\end{align*}

Next, recall the  cutoff function $\phi_1$ in \eqref{phi1} and set $\phi_2(x)=\phi_1(x/2)$, that is,
\begin{align*}
\phi_2\in C^\infty_0(\RR^3), \quad 0\le \phi_2(x)\le 1, \quad \phi_2(x)=\left\{
\begin{array}{ll}
1, \ &|x|\le 2,
\\
0,  &|x|\ge 4.
\end{array}
\right.
\end{align*}
Since $\phi_1(x-a)=\phi_1(x-a)\phi_2(x-a)$ holds, we see that for $a\in\RR^3$,
\begin{align*}
\phi_1&(x-a)\partial^\alpha_\beta\Gamma_\varepsilon^t(U,\,\,
V)
 \\&=\sum_{\begin{subarray}{l}\alpha_1+\alpha_2=\alpha;\,
\\
\beta_1+\beta_2+\beta_3=\beta
\end{subarray}}
C_{\alpha_1, \alpha_2, \beta_1,
\beta_2,
\beta_3}{\cT}_\varepsilon(\phi_1(x-a)\partial^{\alpha_1}_{\beta_1}
U,\,\,\phi_2(x-a)\partial^{\alpha_2}_{\beta_2} V,\,\,
\partial^{\beta_3}_v\mu(t)
).\notag
\end{align*}
To prove \eqref{E-uper-estimate}, put
\begin{align*}
&g_1=\phi_1(x-a)\partial^{\alpha_1}_{\beta_1} U,\,\, \qquad
h_2=\phi_2(x-a)\partial^{\alpha_2}_{\beta_2} V,\,\, \qquad
\mu_3(t)=
\partial^{\beta_3}_v\mu(t),
\\&
{\cT}_\varepsilon(g_1,h_2,\mu_3(t))={\cT}_\varepsilon^+
-{\cT}_\varepsilon^-.
\end{align*}
Throughout this section, we often use the estimates
\begin{align*}
\mu(t,v),\quad |\mu_3(t)|= |\partial^{\beta_3}_v\mu(t,v)|\le
C_{\rho,\, k} \,\,e^{-\rho \langle v\rangle ^2/4}, \qquad t\in[0,
T_0], \quad v\in\RR^3.
\end{align*}
We compute ${\cT}_\varepsilon^+$ as follows.
\begin{align*}
|W_\ell {\cT}_\varepsilon^+|&\le C \iint  |v-v_*|^\gamma
|\mu_3(t,v_*)|\frac{W_\ell}{(W_{\ell})'_*(W_{\ell})'}|(W_{\ell}g_1)'_*|| (
W_{\ell}h_2)'|dv_*d\sigma
\\&
\le C\Big[ \iint |v-v_*|^{2\gamma}
 |\mu_3(t,v_*)|^2dv_*d\sigma
 \Big]^{1/2}
 \\&\hspace*{2cm}\times\Big[ \iint
\ |(W_{\ell}g_1)'_*
 (W_{\ell}h_2)'|^2dv_*d\sigma \Big]^{1/2}
 \\&\le
 C\Big[ \iint W_{2\gamma}|(W_{\ell}g_1)'_*
 (W_{\ell}h_2)'|^2dv_*d\sigma \Big]^{1/2},
 \end{align*}
where we have used
$$
\frac{W_l}{(W_l)'_*(W_l)'}\leq 1$$
which comes from the energy conservation, and an elementary inequality
\begin{align}\label{ele}
\iint |v-&v_*|^{\gamma}
 |\mu_3(t,v_*)|dv_*d\sigma
\\&\le C\notag
 \iint |v-v_*|^{\gamma} e^{-\rho|v_*|^2/4}dv_*
\le C\langle v\rangle^{\gamma}, \quad \gamma>-3,
 \end{align}
 with some $\rho>0$.
 Since the change of variables
\begin{equation*}
(v,v_*,\sigma) \to (v',v'_*,\sigma'),\qquad \sigma'=(v-v_*)/|v-v_*|,
\end{equation*}
has a unit Jacobian, and since $W_{\gamma}\le (W_{\gamma^+})'_*(W_{\gamma^+})'$ holds,
we get
\begin{align*}
\|W_\ell {\cT}_\varepsilon^+\|_{L^2(\RR^6)}^2 &\le
C\iiiint |(W_{\ell+\gamma^+}g_1)'_*
 ( W_{\ell+\gamma^+} h_2)'|^2dv_*d\sigma dvdx
 \\&
 \le C\int\|W_{\ell+\gamma^+}g_1\|^2_{L^2(\RR^3_v)} \|
W_{\ell+\gamma^+} h_2\|^2_{L^2(\RR^3_v)}dx.
\end{align*}
If $|\a_1 + \b_1| \leq 2$,  by virtue of  the  Sobolev embedding and by the assumption
$k \ge 4$,  we have
\begin{align*}
\|W_\ell{\cT}_\varepsilon^+\|_{L^2(\RR^6)}
&\le C \|W_{\ell+\gamma^+}g_1\|_{L^\infty(\RR_x^3 ; L^2(\RR^3_v))} \|
W_{\ell+\gamma^+} h_2\|_{L^2(\RR^6_{x,v})}\\
&\leq C
\|\phi_1(x-a)W_{\ell+\gamma^+}\partial^{\alpha_1}_{\beta_1}
U\|_{H^2_x(L^2_v)} \|\phi_2(x-a)W_{\ell+\gamma^+}\partial^{\alpha_2}_{\beta_2}V\|_{L^2(\RR^6_{x,v})}.
\end{align*}
 Taking the supremum of both sides with respect to  $a\in \RR^3$ and since
$\phi_1$ and $\phi_2$ define the equivalent norms, we have
\[
\|{\cT}_\varepsilon^+\|_{H^{0, \ell}_{\it ul}(\RR^6)}
\le C \|U\|_{H^{k,\ell+\gamma^+}_{\it ul}(\RR^6)}\|V\|_{H^{k,\ell+\gamma^+}_{\it ul}(\RR^6)}.
\]
The computation is similar when $|\a_2 + \b_2| >2$ for which $|\a_1+\b_1|\le k- |\a_2 + \b_2|
\le k-3$. Also, the the estimate of $\cT^-_\varepsilon$
can be done similarly but more straightforwardly.
This completes  the proof of the lemma.
 In the below we will use the following estimates which comes directly from the above proof.\end{proof}

\begin{lemm}\label{E-lemm3.111a}
Let  $\,  \gamma>-3/2 $.
Then for any $\varepsilon>0,\, k\geq 4,\, l\geq 0$,
and
for any $U,V$
belonging to $H^{k,\ell}_{\it ul}(\RR^6)$, it holds that
\begin{align}\label{E-uper-estimate}
\|\phi_1(x&-a)W_{\ell}\partial^{\alpha}_{\beta} \Gamma_\varepsilon^{t,\pm}(U,\, V)\|_{L^2(\RR^6)}
\\&\leq C\sum\Big(
 \|\phi_1(x-a)W_{\ell+\gamma^+}\partial^{\alpha_1}_{\beta_1}
 U\|_{H^2_x(L^2_v)}
\|\phi_2(x-a)W_{\ell+\gamma^+}\partial^{\alpha_2}_{\beta_2}V\|\notag
\\&\quad +
\|\phi_1(x-a)W_{\ell+\gamma^+}\partial^{\alpha_1}_{\beta_1}
 U\|_{L^2(\RR^6)}
\|\phi_2(x-a)W_{\ell+\gamma^+}\partial^{\alpha_2}_{\beta_2}V\|_{H^2_x(L^2_v)}\Big)
.\notag
\end{align}
for some
$C>0$ depending  on $\varepsilon,\, k,\, \ell$ as well as $\rho$, $\kappa$.
\end{lemm}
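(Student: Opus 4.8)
The plan is to read the bound off directly from the proof of Lemma \ref{E-lemm3.111}, stopping one step earlier. In that proof, after writing $\Gamma_\varepsilon^t(U,V)=\cT_\varepsilon(U,V,\mu(t))$ as in \eqref{E-3.105}, applying the Leibniz rule in $x$ together with the translation invariance in $v$ of $\cT_\varepsilon$, and using $\phi_1(x-a)=\phi_1(x-a)\phi_2(x-a)$, one obtains for each fixed $a$ a finite sum, over $\alpha_1+\alpha_2=\alpha$ and $\beta_1+\beta_2+\beta_3=\beta$, of terms $\cT_\varepsilon^\pm(g_1,h_2,\mu_3(t))$ with $g_1=\phi_1(x-a)\pa^{\alpha_1}_{\beta_1}U$, $h_2=\phi_2(x-a)\pa^{\alpha_2}_{\beta_2}V$ and $\mu_3(t)=\pa^{\beta_3}_v\mu(t)$; each such term is then estimated in $L^2(\RR^6)$, and only afterwards does one take the supremum over $a$ and sum over $|\alpha+\beta|\le k$. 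Lemma \ref{E-lemm3.111a} is precisely this per-$a$, per-multi-index estimate, so essentially no new work is required.

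Concretely, for the gain part I would reproduce the estimate of $\cT_\varepsilon^+$ carried out there: using $W_\ell/((W_\ell)'_*(W_\ell)')\le1$ from energy conservation, Cauchy--Schwarz in $(v_*,\sigma)$ to peel off $|v-v_*|^\gamma|\mu_3(t,v_*)|$ (this is where $\gamma>-3/2$ enters, via $\int|v-v_*|^{2\gamma}e^{-\rho|v_*|^2/2}\,dv_*\le C\langle v\rangle^{2\gamma^+}$, analogously to \eqref{ele}), the unit-Jacobian change of variables $(v,v_*,\sigma)\mapsto(v',v'_*,\sigma')$, and $W_\gamma\le(W_{\gamma^+})'_*(W_{\gamma^+})'$, one arrives at
\[
\|W_\ell\cT_\varepsilon^+(g_1,h_2,\mu_3(t))\|_{L^2(\RR^6)}^2\le C\int_{\RR^3_x}\|W_{\ell+\gamma^+}g_1\|_{L^2(\RR^3_v)}^2\,\|W_{\ell+\gamma^+}h_2\|_{L^2(\RR^3_v)}^2\,dx.
\]
The key observation is that this right-hand side is \emph{symmetric} in $g_1$ and $h_2$, so I can apply the Sobolev embedding $H^2_x(L^2_v)\hookrightarrow L^\infty_x(L^2_v)$ to whichever of the two factors I wish: placing it on $g_1$ and keeping $h_2$ in $L^2(\RR^6)$ gives the first term of the claimed bound, the opposite choice gives the second. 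The loss term $\cT_\varepsilon^-$ satisfies an estimate of the same shape (and is easier, needing no change of variables), and summing over the finitely many Leibniz splittings finishes the proof.

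Because the real work is already done in Lemma \ref{E-lemm3.111}, I do not anticipate a genuine obstacle; the one thing to watch is to keep the bilinear estimate symmetric in $U$ and $V$ even though the kernel $\cT_\varepsilon$ is asymmetric in its two slots (one argument enters as $U'_*$, the other as $V'$). Having both terms on the right-hand side is exactly what will later let the $H^2_x$-regularity be thrown onto the factor carrying fewer derivatives, so I would make sure the cases $|\alpha_1+\beta_1|\le2$ and $|\alpha_1+\beta_1|>2$ (so that $|\alpha_2+\beta_2|\le k-3$) are both visibly covered.
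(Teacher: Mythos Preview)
Your proposal is correct and matches the paper's own treatment: the paper simply states that Lemma \ref{E-lemm3.111a} ``comes directly from the above proof'' of Lemma \ref{E-lemm3.111}, and your write-up spells out exactly that extraction --- stopping before the supremum in $a$ and the sum over $|\alpha+\beta|\le k$, and using the symmetry of the intermediate bilinear bound to place the $H^2_x$ embedding on either factor. No gap.
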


We now study the following Cauchy problem for the cutoff Boltzmann
equation

\begin{equation}\label{E-Cauchy-cut-off}
\left\{\begin{array}{l}
g_t+v\cdot\nabla_x g +\kappa \la v \ra^2 g=\Gamma^t_\varepsilon(g, g),\\
g|_{t=0}=g_0\,,
\end{array}\right.
\end{equation}
for which we shall obtain uniform estimates in weighted
Sobolev spaces.
We first prove the existence of weak solutions.

\begin{theo}\label{E-exist-cut-off}
Assume that $-3/2<\gamma\le 1$. Let $k\ge  4,\, l \ge 0$, $\varepsilon>0$
and $D_0>0$. Then, there exists $T_\varepsilon \in ]0, T_0]$
such that for any    initial
data $g_0$ satisfying
\begin{equation}\label{initial}
 g_0\in H^{k,\ell}_{\it ul}({\mathbb R}^6),  \qquad g_0\ge 0,
\qquad \|g_0\|_{H^{k,\ell}_{\it ul}({\mathbb
R}^6)}\le D_0,
\end{equation}
the Cauchy problem (\ref{E-Cauchy-cut-off}) admits a unique
solution $g^\varepsilon$ having the property
$$
 g^\varepsilon\in C^0(]0, T_\varepsilon[;\,\, H^{k,\ell}_{\it ul}({\mathbb R}^6)) ,
 \qquad g\ge 0,
\qquad \|g^\varepsilon\|_{L^\infty(]0, T_\varepsilon[;\,\,
H^{k,\ell}_{\it ul}({\mathbb R}^6))}\le 2D_0.
$$
Moreover, this solution enjoys a moment gain in the sense that
\begin{equation*}
g^\varepsilon\in \cM^{k, \ell+1}(]0,T_\varepsilon[\times (\RR^6)).
\end{equation*}
\end{theo}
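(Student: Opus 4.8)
The plan is to solve the cutoff Cauchy problem \eqref{E-Cauchy-cut-off} by a standard linear iteration scheme, using the characteristics of the transport operator $g_t + v\cdot\nabla_x g + \kappa\la v\ra^2 g$ to produce a contraction-type mapping, and then closing the argument with the \emph{a priori} bound $\|g\|_{H^{k,\ell}_{\it ul}}\le 2D_0$ on a short time interval. Concretely, set $g^{(0)}\equiv g_0$ (extended constantly in $t$) and define $g^{(n+1)}$ as the solution of the linear problem
\begin{equation*}
\partial_t g^{(n+1)} + v\cdot\nabla_x g^{(n+1)} + \kappa\la v\ra^2 g^{(n+1)} = \Gamma^t_\varepsilon(g^{(n)}, g^{(n)}), \qquad g^{(n+1)}|_{t=0}=g_0.
\end{equation*}
Since the transport part has the explicit solution operator along characteristics $x\mapsto x-tv$ with the decaying factor $e^{-\kappa t\la v\ra^2}$, one has Duhamel's formula
\begin{equation*}
g^{(n+1)}(t,x,v) = e^{-\kappa t\la v\ra^2} g_0(x-tv,v) + \int_0^t e^{-\kappa(t-\tau)\la v\ra^2}\,\Gamma^\tau_\varepsilon(g^{(n)},g^{(n)})(x-(t-\tau)v,\,v)\,d\tau.
\end{equation*}
First I would check that this defines a bounded map on $C^0([0,T];H^{k,\ell}_{\it ul})$: the free transport flow $g\mapsto g(x-tv,v)$ is \emph{not} an isometry on $H^{k,\ell}_{\it ul}$ because $x$-derivatives mix with $v$-derivatives (the term $t\,\pa_x$), but since $|\alpha+\beta|\le k$ only finitely many such terms appear and each is controlled by $\|g_0\|_{H^{k,\ell}_{\it ul}}$ up to a factor $C(1+T)^k$; the supremum over $a\in\RR^3$ defining the $\it ul$-norm is preserved because translation in $x$ commutes with the flow. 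Together with Lemma \ref{E-lemm3.111}, which gives $\|\Gamma^\tau_\varepsilon(g^{(n)},g^{(n)})\|_{H^{k,\ell}_{\it ul}}\le C_\varepsilon\|g^{(n)}\|_{H^{k,\ell+\gamma^+}_{\it ul}}^2 \le C_\varepsilon\|g^{(n)}\|_{H^{k,\ell}_{\it ul}}^2$ (using $\ell+\gamma^+\le\ell$ when $\gamma\le 0$, and otherwise absorbing the extra weight, or simply noting the bound with $\ell$ replaced by $\ell+\gamma^+$ and restarting with a slightly larger $\ell$), this shows that if $\|g^{(n)}\|_{L^\infty_t H^{k,\ell}_{\it ul}}\le 2D_0$ then $\|g^{(n+1)}\|_{L^\infty_t H^{k,\ell}_{\it ul}}\le (1+CT)D_0 + C_\varepsilon T(2D_0)^2$, which is $\le 2D_0$ once $T=T_\varepsilon$ is small enough depending on $D_0,\varepsilon,k,\ell,\rho,\kappa$.

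Next I would prove convergence of the sequence. Writing $d^{(n)}=g^{(n+1)}-g^{(n)}$, the difference satisfies the same linear transport equation with right-hand side $\Gamma^t_\varepsilon(g^{(n)},g^{(n)})-\Gamma^t_\varepsilon(g^{(n-1)},g^{(n-1)})=\Gamma^t_\varepsilon(d^{(n-1)},g^{(n)})+\Gamma^t_\varepsilon(g^{(n-1)},d^{(n-1)})$ by bilinearity, and zero initial data. Applying Lemma \ref{E-lemm3.111} (bilinear form) together with the a priori bound $2D_0$ on the iterates, Duhamel gives
\begin{equation*}
\|d^{(n)}\|_{L^\infty([0,T];H^{k,\ell}_{\it ul})} \le C(1+T)\, C_\varepsilon\, T\cdot 2D_0\cdot \|d^{(n-1)}\|_{L^\infty([0,T];H^{k,\ell}_{\it ul})},
\end{equation*}
so for $T=T_\varepsilon$ further shrunk so that $C(1+T_\varepsilon)C_\varepsilon T_\varepsilon\cdot 2D_0<1/2$, the sequence is Cauchy in $C^0([0,T_\varepsilon];H^{k,\ell}_{\it ul})$ and converges to a limit $g^\varepsilon$, which solves \eqref{E-Cauchy-cut-off} in the mild (Duhamel) sense and, by the regularity of all terms, classically. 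Uniqueness follows from the same contraction estimate applied to the difference of two solutions. Continuity in time on the closed interval, i.e. $g^\varepsilon\in C^0([0,T_\varepsilon];H^{k,\ell}_{\it ul})$ rather than merely on $]0,T_\varepsilon[$, is read off from the Duhamel formula and the strong continuity of the transport semigroup on $H^{k,\ell}_{\it ul}$.

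Then I would establish non-negativity: here one exploits that the cutoff operator splits as $\Gamma^t_\varepsilon(g,g)=\cT_\varepsilon^+ - \cT_\varepsilon^-$ where, written back in terms of $f=\mu_\kappa(t)g$, the loss term has the form $f(v)\,L_\varepsilon[f](t,x,v)$ with $L_\varepsilon[f]=\int\int B_\varepsilon\, \mu_*(t)g_*\,dv_*d\sigma \ge 0$ a nonnegative (scalar, for fixed $f$) multiplier once $g\ge 0$. A convenient device is to run the iteration in the form
\begin{equation*}
\partial_t g^{(n+1)} + v\cdot\nabla_x g^{(n+1)} + \kappa\la v\ra^2 g^{(n+1)} + g^{(n+1)} L_\varepsilon[\mu_\kappa g^{(n)}] = \widetilde{\cT}_\varepsilon^+(g^{(n)},g^{(n)}),
\end{equation*}
where $\widetilde{\cT}_\varepsilon^+\ge 0$ whenever $g^{(n)}\ge 0$; then the explicit integrating factor $\exp(-\kappa t\la v\ra^2 - \int_0^t L_\varepsilon[\cdots]\,d\tau)$ along characteristics shows $g^{(n+1)}\ge 0$ whenever $g^{(n)}\ge 0$ and $g_0\ge 0$, and the limit inherits $g^\varepsilon\ge 0$. (One checks that this modified iteration has the same fixed point and the same contraction estimates, the extra linear term being handled by the same Lemma \ref{E-lemm3.111} bounds.) Finally, the moment gain $g^\varepsilon\in\cM^{k,\ell+1}(]0,T_\varepsilon[\times\RR^6)$ comes from testing the equation for $\pa^\alpha_\beta g^\varepsilon$ against $\phi_1(x-a)^2 W_{\ell}^2\, W_2^{\,0}\cdots$ — more precisely, multiplying by $\phi_1(x-a)^2 W_{2\ell}\,\pa^\alpha_\beta g^\varepsilon$, integrating over $]0,t[\times\RR^6$, and using that the term $\kappa\la v\ra^2 g^\varepsilon$ produces, after integration, exactly a coercive contribution $\kappa\int_0^t\!\!\int|\phi_1(x-a)W_{\ell+1}\pa^\alpha_\beta g^\varepsilon|^2$, while the transport term integrates to a boundary term and the $\Gamma^t_\varepsilon$ term is bounded by Lemma \ref{E-lemm3.111} with one higher weight absorbed into a small fraction of the coercive term plus $C_\varepsilon \|g^\varepsilon\|^2_{H^{k,\ell}_{\it ul}}$; taking the supremum over $a$ gives the $\cM^{k,\ell+1}$-bound.

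The main obstacle I anticipate is not any single estimate but the bookkeeping around the weight and derivative loss: Lemma \ref{E-lemm3.111} costs $\gamma^+$ extra velocity-weight and the transport flow costs a polynomial-in-$T$ factor from the $x$-$v$ derivative mixing, so one must be slightly careful that the iteration closes in a \emph{fixed} space $H^{k,\ell}_{\it ul}$ — this is why the statement demands $\ell\ge 3$ (equivalently $\ell+\gamma^+\le\ell$ in the relevant range, or room to spare) and $k\ge 4$ (for the Sobolev embedding $H^2_x\hookrightarrow L^\infty_x$ used inside Lemma \ref{E-lemm3.111}). The positivity argument via the modified iteration is the other delicate point, since one must verify that modifying the linearization does not spoil the contraction constant; but since the added term $g^{(n+1)}L_\varepsilon[\mu_\kappa g^{(n)}]$ is linear in the unknown with a coefficient bounded in $H^{k,\ell}_{\it ul}$-type norms uniformly on $[0,T_\varepsilon]$, it is absorbed by further shrinking $T_\varepsilon$.
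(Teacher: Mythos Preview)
Your proposal has a genuine gap in the iteration closure when $\gamma>0$. Lemma~\ref{E-lemm3.111} gives
\[
\|\Gamma^t_\varepsilon(g,g)\|_{H^{k,\ell}_{\it ul}}\le C_\varepsilon\|g\|_{H^{k,\ell+\gamma^+}_{\it ul}}^2,
\]
and for $\gamma>0$ the right-hand side lives in a strictly stronger space than $H^{k,\ell}_{\it ul}$. Neither of your suggested fixes works: ``absorbing the extra weight'' has no meaning in a pure Duhamel argument, since the propagator $e^{-\kappa t\la v\ra^2}$ composed with translation along characteristics yields no weight gain in $H^{k,\ell}_{\it ul}$; and ``restarting with a slightly larger $\ell$'' is circular, because the theorem fixes the initial data in $H^{k,\ell}_{\it ul}$ (with $\ell\ge 0$, not $\ell\ge 3$ as you write---that hypothesis belongs to Theorem~\ref{E-theo-0.2}). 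The paper states explicitly at the opening of Section~\ref{section2} that this weight loss prevents the usual iteration from closing.

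The remedy is exactly the mechanism you invoke only at the very end for the moment gain, but it has to drive the whole iteration. The paper differentiates, localizes, and tests against $h^{n+1}_\ell=\phi_1(x-a)W_\ell\pa^\alpha_\beta g^{n+1}$, so that the term $\kappa\la v\ra^2 g^{n+1}$ produces the coercive contribution $\kappa\|h^{n+1}_{\ell+1}\|^2$ on the left. The collision contribution is then estimated after shifting one weight,
\[
|(W_\ell\,\pa^\alpha_\beta\Gamma^{t,\pm}_\varepsilon,\,h^{n+1}_\ell)|
=|(W_{\ell-1}\,\pa^\alpha_\beta\Gamma^{t,\pm}_\varepsilon,\,h^{n+1}_{\ell+1})|
\lesssim \|g\|_{H^{k,\ell-1+\gamma^+}_{\it ul}}^2\,\|h^{n+1}_{\ell+1}\|,
\]
so that the weight actually needed is only $\ell-1+\gamma^+\le\ell$ (here is where $\gamma\le 1$ enters), and the dangling factor $\|h^{n+1}_{\ell+1}\|$ is absorbed into the coercive term by Young's inequality. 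Thus the iteration closes in the space $Y=L^\infty_tH^{k,\ell}_{\it ul}\cap\cM^{k,\ell+1}$, and the moment gain is not a posteriori decoration but the very device that compensates the weight loss at each step. Two further points: (i) a priori one only knows $g^{n+1}\in H^{k,\ell-\gamma^+}_{\it ul}$ from the mild form, so the energy computation must be justified through velocity and frequency cutoffs $\chi_j(v)$, $S_N(D_x)$ before passing to the limit---your sketch omits this; (ii) the paper runs the semi-implicit scheme $\Gamma^{t,+}_\varepsilon(g^n,g^n)-\Gamma^{t,-}_\varepsilon(g^n,g^{n+1})$ from the outset, so non-negativity is built into the mild form rather than recovered by switching schemes midway.
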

 \begin{rema}
{\rm (1) } Notice that we do not assume $g_0\in H^{k,\ell+1}_{\it ul}(\RR^6)$.
The moment gain will be  essentially used  below to control
the weight loss of the collision operator shown in Lemma \ref{E-lemm3.111}.

{\rm (2) } The regularity of $g^\varepsilon $ with respect to $t$
variable follows directly from the equation \eqref{E-Cauchy-cut-off}.

\end{rema}
\noindent {\em Proof of Theorem \ref{E-exist-cut-off}. } We prove
the existence of non-negative solutions by successive approximation
that preserves the  non-negativity, which is defined by using the usual
splitting of the collision operator \eqref{E-3.105} into the
gain (+) and loss (-) terms,
\begin{align*}
\Gamma^{t,+}_\varepsilon(g,h)&=\iint_{\RR^3_{v_*}\times\mathbb
S^{2}_{\sigma}} B_\varepsilon(v-v_*,\, \sigma)\mu_*(t)\,  g'_*  h' d
v_* d \sigma,
\\
\Gamma^{t,-}_\varepsilon(g,h)& = hL_\varepsilon(g),
\\
L_\varepsilon(g)&=\iint_{\RR^3_{v_*}\times\mathbb S^{2}_{\sigma}}
B_\varepsilon(v-v_*,\, \sigma)\mu(t,v_*)\,  g_* d v_* d \sigma.
\end{align*}
Evidently, Lemma \ref{E-lemm3.111} applies to
$\Gamma^{t,\pm}_\varepsilon$, and in view of \eqref{ele}, the
linear operator $L_\varepsilon$ satisfies
\begin{align}\notag
|\partial^{\alpha}_{\beta}&L_\varepsilon(g)(t,x,v)|\le
 C\sum_{\beta_1+\beta_2=\beta}\int |v-v_*|^\gamma
|(\pa_{\beta_1}\mu)(t,v_*)| \
|(\partial^{\alpha}_{\beta_2}g)(t,x,v_*)|dv_*
\\&\label{L}
\le C\Big[\int |v-v_*|^{2\gamma}
e^{-\rho|v*|^2/4}dv_*\Big]^{1/2}\sum_{\beta_2\le \beta}\Big[\int |(\partial^{\alpha}_{\beta_2}g)(t,x,v_*)|^2dv_*\Big]^{1/2}
\\&\notag
\le C
\langle v\rangle ^\gamma
\|\partial^{\alpha}_xg\|_{H^{|\beta|}(\RR_v^3)},
\quad t\in[0,T_0],
\end{align}
for a constant $C>0$ depending on $\varepsilon$.

We now define a sequence of approximate solutions
$\{g^n\}_{n\in\NN}$ by
\begin{equation}
\left\{
\begin{array}{l}
g^0=g_0\, ;\\
\partial_t g^{n+1}+v\cdot\nabla_x g^{n+1}+\kappa\langle |v|
\rangle^2 g^{n+1}
\\
\hspace{3cm}
=\Gamma_\varepsilon^{t,+} (g^n,\,
g^n)
-\Gamma_\varepsilon^{t,-} (g^n,\, g^{n+1}),\qquad
 \\
g^{n+1}|_{t=0}=g_0.
\end{array}\right.\label{iteration}
\end{equation}
Actually, in view of \eqref{L} we can consider the mild form
\begin{align}\label{mild}
g^{n+1}&(t,\, x,\, v)=e^{-\kappa\langle |v|\rangle^2 t-V^n(t,\,
0)}g_0(x-tv,\, v)
\\ \notag
&+\int^t_0
 e^{-\kappa\langle |v|\rangle^2(t-s)-V^n(t,\, s)}
\Gamma_\varepsilon^{s,\, +} (g^n,\, g^n)(s, x-(t-s)v, \, v) ds,
\end{align}
where
\[
V^n(t,\, s)=\int_s^t L_{\varepsilon}(g^n)(s, x-(t-s)v,\, v)ds.
\]

We shall first notice that \eqref{mild} defines well an approximate sequence of solutions.
In fact, it follows  from Lemma \ref{E-lemm3.111} that if
\begin{align}\label{indhypo}
&g_0\in H^{k,\ell}_{\it ul}(\RR^6), \quad g_0\ge 0,
\\&\notag
g^n\in L^\infty(]0, T[;\,\, H^{k,\ell}_{\it ul}(\RR^6)),\qquad g^n\ge 0,
\end{align}
for any $T\in ]0,T_0[$, then
 the mild form \eqref{mild} determines
$g^{n+1}$ in the function class
\begin{equation}\label{loss}
g^{n+1}\in L^\infty(]0, T[;\,\, H^{k,\ell-\gamma^+}_{\it ul}(\RR^6)),\qquad
g^{n+1}\ge 0,
\end{equation}
and solves \eqref{iteration}. Thus $g^{n+1}$ exists and is
non-negative. However, it  appears to have a loss of weight in the velocity
variable. We shall now show that
the term $\kappa\langle v \rangle^2 g^{n+1}$ in \eqref{iteration}
not only recovers this weight loss but also creates  higher moments.
 To see this,  introduce the space and
 norm defined by
 $$
 \begin{array}{ll}
 &Y=L^\infty(]0, T[;\,\, H^{k,\ell}_{\it ul}(\RR^6))\cap \cM^{k,\ell+1}(]0, T[\times \RR^6),
 \\[0.3cm]&
 ||g||_Y^2=\|g\|^2_{L^\infty(]0, T[;\,\, H^{k,\ell}_{\it ul}(\RR^6))}
 +\kappa\|g\|^2_{\cM^{k,\ell+1}(]0, T[\times \RR^6)}\,.
 \end{array}
 $$
\begin{lemm}\label{estimate1}
Assume that $ -3/2<\gamma\le 1$ and let $k\ge  4, l \ge 0, \varepsilon>0$.
Then, there exist positive numbers $ C_1,C_2$ such that if
$\rho>0,\, \kappa >0$ and if $g_0$ and $ g^n$ satisfy \eqref{indhypo}
with some $T\le T_0$, the function $g^{n+1}$ given by \eqref{mild}
enjoys the properties
\begin{align*}
&g^{n+1}\in Y
\\&
||g^{n+1}||_Y
\le e^{C_1 K_n T}
\left(\|g_0\|^2_{H^{k,\ell}_{\it ul}(\RR^6)}+\frac{C_2}{\kappa} ||g^n||^4_{L^4(]0,
T[;\,\, H^{k,\ell}_{\it ul}(\RR^6))}\right),
\end{align*}
where $K_n$ is a positive constant depending on
$\|g^n\|_{L^\infty(]0, T[;\,\, H^{k,\ell}_{\it ul}({\mathbb R}^6))}$ and $\kappa$.
\end{lemm}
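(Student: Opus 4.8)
The plan is to prove the estimate by a weighted energy argument applied to the linear equation \eqref{iteration} satisfied by $g^{n+1}$, with the zero-order term $\kappa\la v\ra^2 g^{n+1}$ supplying the moment gain that makes up the $\cM^{k,\ell+1}$-part of the $Y$-norm. Concretely, I fix a translation $a\in\RR^3$ and a multi-index $(\alpha,\beta)$ with $|\alpha+\beta|\le k$, apply $\phi_1(x-a)W_\ell\pa^\alpha_\beta$ to \eqref{iteration}, multiply by $\phi_1(x-a)W_\ell\pa^\alpha_\beta g^{n+1}$ and integrate over $\RR^6$. Since $v$ and $W_\ell$ do not depend on $x$, an integration by parts turns the transport term into the shell contribution $-\frac12\int(v\cdot\nabla_x\phi_1^2(x-a))W_\ell^2|\pa^\alpha_\beta g^{n+1}|^2$, supported where $1\le|x-a|\le2$, plus the commutators $[\pa^\beta_v,v\cdot\nabla_x]$, which are of total order $\le k$ and carry no extra velocity weight; splitting $|v|W_\ell^2\le\frac1R W_{\ell+1}^2+C_R W_\ell^2$ and covering the shell by finitely many unit cells, one absorbs these, for $R$ large, into a controlled fraction of the dissipation plus a $\kappa$-dependent multiple of $\|g^{n+1}\|^2_{H^{k,\ell}_{\it ul}}$. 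The zero-order term produces the good term $\kappa\|\phi_1(x-a)W_{\ell+1}\pa^\alpha_\beta g^{n+1}\|^2_{L^2}$ together with the commutators $[\pa^\beta_v,\la v\ra^2]$, which gain at most one power of $|v|$ at the cost of one $v$-derivative and are reabsorbed into the dissipation by a routine weighting of the sum over the order of $v$-derivatives.

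For the collision part I rely on the bounds already proved. The loss term $-\Gamma^{t,-}_\varepsilon(g^n,g^{n+1})=-g^{n+1}L_\varepsilon(g^n)$ produces at top order the favorable quantity $-\int\phi_1^2W_\ell^2|\pa^\alpha_\beta g^{n+1}|^2L_\varepsilon(g^n)\le 0$, since $g^n\ge 0$ forces $L_\varepsilon(g^n)\ge 0$, and its Leibniz remainders are estimated, via \eqref{L}, the Sobolev embedding in $x$ (this is where $k\ge4$ enters), and Cauchy--Schwarz, by $CK_n\|g^{n+1}\|^2_{H^{k,\ell+\gamma^+/2}_{\it ul}}$; for $\gamma\le0$ this is simply $CK_n\|g^{n+1}\|^2_{H^{k,\ell}_{\it ul}}$, while for $0<\gamma\le1$ the interpolation $W_{\ell+\gamma^+/2}\le W_\ell^{1-\gamma^+/2}W_{\ell+1}^{\gamma^+/2}$ followed by Young's inequality splits it into a controlled fraction of the dissipation plus a $(\kappa,K_n)$-dependent multiple of $\|g^{n+1}\|^2_{H^{k,\ell}_{\it ul}}$. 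For the gain term $\Gamma^{t,+}_\varepsilon(g^n,g^n)$ I pair against the test function after the weight splitting $W_\ell^2=W_{\ell-\gamma^+}W_{\ell+\gamma^+}$: by Lemma \ref{E-lemm3.111a} the factor carrying $W_{\ell-\gamma^+}$ is bounded in terms of only $\ell$ velocity moments of $g^n$, exactly what \eqref{indhypo} provides, while the factor carrying $W_{\ell+\gamma^+}\le W_{\ell+1}$ is controlled by the dissipation; Young's inequality with a $\kappa$-calibrated weight then leaves a contribution $\le(\text{controlled fraction of the dissipation})+\frac{C}{\kappa}\|g^n\|^4_{H^{k,\ell}_{\it ul}}$.

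Summing over $|\alpha+\beta|\le k$, moving the dissipation to the left, integrating over $[0,t]$ and taking the supremum over $a\in\RR^3$, one arrives at
\[
\|g^{n+1}(t)\|^2_{H^{k,\ell}_{\it ul}}+\kappa\|g^{n+1}\|^2_{\cM^{k,\ell+1}(]0,t[\times\RR^6)}\le\|g_0\|^2_{H^{k,\ell}_{\it ul}}+\frac{C_2}{\kappa}\|g^n\|^4_{L^4(]0,t[;H^{k,\ell}_{\it ul})}+C_1K_n\int_0^t\|g^{n+1}(s)\|^2_{H^{k,\ell}_{\it ul}}\,ds,
\]
where $K_n$ collects all constants that depend on $\|g^n\|_{L^\infty(]0,T[;H^{k,\ell}_{\it ul})}$ and on $\kappa$. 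Gronwall's lemma applied to $t\mapsto\|g^{n+1}(t)\|^2_{H^{k,\ell}_{\it ul}}$ yields the asserted inequality, and the finiteness of the right-hand side gives $g^{n+1}\in L^\infty(]0,T[;H^{k,\ell}_{\it ul})\cap\cM^{k,\ell+1}(]0,T[\times\RR^6)=Y$; its existence and non-negativity are already known from the mild formulation \eqref{mild}.

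The heart of the matter, and the reason the restriction $\gamma\le1$ appears here, is the velocity-weight bookkeeping in the collision terms: $\Gamma^{t,+}_\varepsilon$ loses $\gamma^+$ powers of $\la v\ra$ and $L_\varepsilon$ grows like $\la v\ra^{\gamma^+}$, and the only reservoir available to pay this cost is the single extra power $\la v\ra^2$ of the damping, which forces $\gamma^+\le1$ and makes all the absorptions, hence the whole estimate, quantitatively $\kappa$-dependent. A secondary, purely technical point is that the energy identity is a priori only formal, since \eqref{mild} delivers $g^{n+1}$ merely in $L^\infty(]0,T[;H^{k,\ell-\gamma^+}_{\it ul})$; I would make it rigorous in the standard manner, by first running the whole argument on a regularized iteration (Friedrichs mollification in $x$ and $v$, together with an $-\eta\Delta_v$ term, which does not destroy positivity), obtaining the bound uniformly in the regularization parameter, and then passing to the limit.
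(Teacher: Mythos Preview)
Your proposal is correct and follows essentially the same energy-estimate-plus-Gronwall approach as the paper. A few minor technical differences are worth noting: the paper justifies the energy identity simply by testing against $\chi_j^2(v)S_N^2(D_x)h^{n+1}_\ell$ (a velocity cutoff times an $x$-mollifier) rather than modifying the equation with a viscosity $-\eta\Delta_v$; it does \emph{not} exploit the sign of the top-order loss term but instead bounds both $\Gamma^{t,\pm}_\varepsilon$ uniformly via Lemma~\ref{E-lemm3.111a}; and for the transport/weight commutators it uses the cleaner splitting $(G_2+G_3,\,h^{n+1}_\ell)=(W_{-1}(G_2+G_3),\,h^{n+1}_{\ell+1})$, which avoids your shell-covering argument altogether.
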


\begin{proof}  Put
\begin{align}\label{hnl}
h^n_\ell
=\phi_1(x-a)W_\ell\partial^\alpha_\beta g^n.
\end{align}
Differentiate \eqref{iteration} with respect to $x,v$
and multiply by $\phi_1(x-a)$
to deduce
\begin{align*}
\partial_t h^{n+1}_\ell& +v\cdot\nabla_x h^{n+1}_\ell+
\kappa \langle  v \rangle ^2 h^{n+1}_\ell = G_1^+ -G_1^- +G_2 +G_3,
\\
&G_1^{+}=\phi_1(x-a)W_\ell\partial^\alpha_\beta \Gamma_\varepsilon^{t,+}(g^n,\,\,
g^n),
\\& G_1^{-}=\phi_1(x-a)W_\ell\partial^\alpha_\beta  \Gamma_\varepsilon^{t,-}(g^n,\,\,
g^{n+1}),
\\
&G_2=-W_\ell[\phi_1(x-a)\partial_\b ,\,\, v\cdot\nabla_x] \pa^\alpha g^{n+1} ,
\\
&G_3=-
\kappa W_\ell\sum_{|\beta'|=1,2}C_{\beta'}\phi_1(x-a)
( \partial_{\beta'}\langle v \rangle ^2)\partial^{\alpha}_{\beta -\beta'}
 g^{n+1}.
\end{align*}

Let
$\chi_j\in C^\infty_0(\RR^3), \,\, j\in\NN \,, $ be the cutoff functions
\[
\chi_j(v)=\left\{\begin{array}{ll} 1\, ,\,\,\,\,\,\,& |v|\le j\,,\\
0\,, & |v|\ge j+1\, . \end{array}\right.
\]
Let $S_N(D_x)$ be a mollifier definded by the Fourier multiplier
\[
S_N(\xi)=2^{-N}S(2^{-N}\xi), \quad S(\xi)\in \cS(\RR^3), \quad S(\xi)=1 \quad(|\xi|\le 1), \quad =0 \quad(|\xi|\ge 1).
\]
We remark that
\eqref{loss} does not necessarily imply $
h^{n+1}_{\ell+1}(t)\in L^2(\RR^6)$, but $\chi_j h^{n+1}_{\ell+1}(t)$
does for all $j\in \NN$. 
Hence, we can use  $\chi_j^2 S_N^2(D_x) h^{n+1}_\ell$ as a test function to get
\begin{align}\label{diff1}
&\frac{1}{2}\frac{d}{dt}\|S_N(D_x)\chi_j h^{n+1}_\ell\|^2+\kappa\|S_N(D_x)\chi_j
h^{n+1}_{\ell+1}\|^2 \\ \notag
&\hskip2cm =(G_1^+ -G_1^-+G_2+G_3, S_N(D_x)^2 \chi_j^2 h^{n+1}_\ell).
\end{align}
Here and in what follows,
the norm
$\| \ \|$ and inner product $( \ , \ )$ are those of
$L^2(\RR^6_{x,v})$ unless otherwise stated.
We shall evaluate the inner products on the right hand side.
Observe
that Lemma \ref{E-lemm3.111a} gives, for $ t\in[0,T]$, since  $\gamma\le 1$ is assumed, and
putting
$\tilde h^n_{\ell}=\phi_2(x-a)W_\ell\pa^\alpha_\beta g^n$,
\begin{align*}
\Big|(G_1^+, S_N^2\chi_j^2 h^{n+1}_\ell)\Big|&= \Big|(S_N\chi_jW_{-1}G_1^+,
S_N\chi_j h^{n+1}_{\ell+1})\Big|
\\&\le C \|W_{-1}G_1^+\| \,
\|S_N \chi_j h^{n+1}_{\ell+1}\|
\\&
\le C
\|h^n_{\ell-1+\gamma^+}\|\
\|\tilde h^n_{\ell-1+\gamma^+}
\|\
 \, \|S_N\chi_j h^{n+1}_{\ell+1}\|
\\
&
\le C \|g^n\|^2_{H^{k,\ell-1+\gamma^+}_{\it ul}(\RR^6)} \,
\|S_N\chi_j h^{n+1}_{\ell+1}\|
\\&
\le \frac{C}{ \kappa}\|g^n\|^4_{H^{k,\ell}_{\it ul}(\RR^6)}
+\frac{\kappa}{4}
\|S_N\chi_j h^{n+1}_{\ell+1}\|^2,
\\
\Big|(G_1^-, S_N^2\chi_j^2 h^{n+1}_\ell)\Big|&= \Big|(S_N\chi_jW_{-1}G_1^-,
S_N\chi_j h^{n+1}_{\ell+1})\Big|
\\&\le C \|W_{-1}G_1^-\| \,
\|S_N \chi_j h^{n+1}_{\ell+1}\|
\\&
 \le C \|h^n_{\ell-1+\gamma^+}\|\
\|\tilde h^{n+1}_{\ell-1+\gamma^+}
\|\
  \, \|S_N\chi_jW_{\ell+1} h^{n+1}_{\ell+1}\|
 \\&
 \le C \|g^n\|_{H^{k,\ell-1+\gamma^+}_{\it ul}(\RR^6)} \
 \|g^{n+1}\|_{H^{k,\ell-1+\gamma^+}_{\it ul}(\RR^6)}
 \|S_N\chi_jW_{\ell+1} h^{n+1}_{\ell+1}\|
 \\&
 \le \frac{C}{ \kappa}\|g^n\|^2_{H^{k,\ell}_{\it ul}(\RR^6)}
 \|g^{n+1}\|_{H^{k,\ell}_{\it ul}(\RR^6)}^2
+\frac{\kappa}{4}
 \|S_N\chi_jW_{l+1} h^{n+1}_{\ell+1}\|^2.
\end{align*}
Here  $C$ is a positive constants independent of $\kappa$,
and we have used
\[
\|h^{n}_\ell\|\le C\|g^{n}\|_{H^{k,\ell}_{\it ul}(\RR^6)}.
\]
and similarly for $\tilde h^n_\ell$.
 The estimation on the remaining two inner products are more straightforward
and can be given as follows. With some abuse of notation,
\begin{align*}
|G_2|&\le|v|W_\ell|\nabla_x\phi_1(x-a))| \ |\phi_2(x-a)\pa^\alpha_\beta g^{n+1}|
+\phi_1(x-a)W_\ell\eta_{\beta}|\pa^{\alpha+1}_{\beta-1} g^{n+1}|
\\ \intertext{where $\eta_\beta=0$ for $\beta=0$ and $=1$ for $|\beta|\ge 1$,while }
|G_3|&\le
\kappa CW_\ell\sum_{|\beta'|=1,2}\phi_1(x-a)
( |v|+1)|\partial^{\alpha}_{\beta -\beta'}
 g^{n+1}|
.
 \\ \intertext{Therefore we get}
\Big|(G_2+G_3, \, &S_N^2\chi_j^2h^{n+1}_\ell)\Big|
=\Big|W_{-1}(G_2+G_3), \, S_N^2\chi_j^2W_1 h^{n+1}_\ell)\Big|
\\&\le C  \|W_{-1}
(G_2+G_3)\| \,\|S_N\chi_j h^{n+1}_{l+1}\|
\\&
\le C(1+\kappa)\|g^{n+1}\|_{H^{k,\ell}_{\it ul}(\RR^6)}
\,\|S_N\chi_j h^{n+1}_{l+1}\|
\\&
\le \frac{C'(1+\kappa)^2}{\kappa}\|g^{n+1}\|_{H^{k,\ell}_{\it ul}(\RR^6)}^2
+\frac{\kappa}{2} \|S_N\chi_j
h^{n+1}_{\ell+1}\|^2.
\end{align*}
The constants $C'$ are independent of $\varepsilon$ and
$\kappa$.

Putting together all the estimates obtained above in \eqref{diff1}
yields
\begin{align*}
\frac{d}{dt}&\|S_N\chi_j
h^{n+1}_\ell\|^2+\kappa\|S_N\chi_j h^{n+1}_{\ell+1}\|^2
 \\&\le
\frac{C''}{ \kappa}((1+\kappa)^2+\|g^n\|^2_{H^{k,\ell}_{\it ul}(\RR^6)})
\|g^{n+1}\|_{H^{k,\ell}_{\it ul}(\RR^6)}^2 +\frac{C}{ \kappa}\|g^n\|^4_{H^k_{l}(\RR^6)}.
\end{align*}
The constants $C,C''$ are independent of $\varepsilon$ and
$\kappa$.
Integrate this over $[0,t]$ to deduce
\begin{align*}
\|S_N&\chi_j
h^{n+1}_\ell(t)\|^2+\kappa\int_0^t\|S_N\chi_j h^{n+1}_{\ell+1}(\tau)\|^2d\tau
 \\&\le\|S_N\chi_j
h^{n+1}_\ell(0)\|^2+
C_1K_n
\int_0^t\|g^{n+1}(\tau)\|_{H^{k,\ell}_{\it ul}(\RR^6)}^2d\tau
 +\frac{C_2}{ \kappa}\int_0^t\|g^n(\tau)\|^4_{H^{k,\ell}_{\it ul}(\RR^6)}d\tau.
\end{align*}
where
\[
K_n=\frac{1}{\kappa}\Big(\|g^n\|^2_{L^\infty(]0,T[;H^{k,\ell}_{\it ul}(\RR^6))}+(1+\kappa)^2 \Big),
\]
and
  $C_1>0$ is a constant independent of $\varepsilon, \kappa$
while $C_2$ is independent of $ \kappa$ but depends on
$\varepsilon$. It is easy to see that we can now take the limit
 $N \rightarrow \infty$ and $j\to \infty$, which results in
 \begin{align*}
\|
h^{n+1}_\ell(t)\|^2&+\kappa\int_0^t\|h^{n+1}_{\ell+1}(\tau)\|^2d\tau
 \\&\le\|
h^{n+1}_\ell(0)\|^2+
C_1K_n
\int_0^t\|g^{n+1}(\tau)\|_{H^{k,\ell}_{\it ul}(\RR^6)}^2d\tau
 +\frac{C_2}{ \kappa}\int_0^t\|g^n(\tau)\|^4_{H^{k,\ell}_{\it ul}(\RR^6)}d\tau.
\end{align*}
Sum up this for $|\alpha+\beta|\le k$ (see \eqref{hnl}) and take the speremum of the left hand side with respect to $a\in\RR^3$. Knowing that the right hand side is independent of $\alpha,\beta$
and also of $a\in\RR^3$,
we have
\begin{align*}
\|g^{n+1}(t)&\|_{H^{k,\ell}_{\it ul}(\RR^6)}^2
+\kappa\|g^{n+1}\|_{\cM^{k,\ell+1}(]0,t[\times\RR^6)}^2
\\&\le\|
g_0\|_{H^{k,\ell}_{\it ul}(\RR^6)}^2+
C_1K_n
\int_0^t\|g^{n+1}(\tau)\|_{H^{k,\ell}_{\it ul}(\RR^6)}^2d\tau
 +\frac{C_2}{ \kappa}\int_0^t\|g^n(\tau)\|^4_{H^{k,\ell}_{\it ul}(\RR^6)}d\tau.
\end{align*}
The Gronwall inequality then gives
\begin{align}\label{gron}
\|g^{n+1}(t)&\|_{H^{k,\ell}_{\it ul}(\RR^6)}^2
+\kappa\|g^{n+1}\|_{\cM^{k,\ell+1}(]0,t[\times\RR^6)}^2
\\&\le \notag
e^{C_1K_nt}\|
g_0\|_{H^{k,\ell}_{\it ul}(\RR^6)}^2
 +\frac{C_2}{ \kappa}\int_0^te^{C_1K_n(t-\tau)}\|g^n(\tau)\|^4_{H^{k,\ell}_{\it ul}(\RR^6)}d\tau.
\end{align}
for all $t\in[0,T]$.

Now the proof of Lemma \ref{estimate1} is completed.\end{proof}

\smallbreak We are now ready to prove the convergence
of $\{g^n\}_{n\in\NN}$. Fix ${ \kappa>0} $. Let $D_0, g_0$ be as in
Theorem \ref{E-exist-cut-off} and introduce an induction hypothesis
\begin{equation}\label{E-3.101}
\|g^{n}\|_{L^\infty(]0, T[;\,\, H^{k,\ell}_{\it ul}(\RR^6))}\leq 2D_0.
\end{equation}
for some $T\in\,]0,T_0]$. Notice that the factor $2$ can be  any
number $>1$.

\eqref{E-3.101} is true for $n=0$ due to \eqref{initial}. Suppose
that this is true for some $n>0$. We shall determine $T$ independent
of $n$. A possible choice is given by
\[
e^{C_1K_0 T}= 2, \quad \frac{2^4C_2}{\kappa}TD_0^2= 1
\quad\text{
where\quad}
K_0=\frac{1}{\kappa}(2D_0+(1+\kappa)^2)
\]
or
\begin{align}\label{Tchoice}
T=\min\left\{\frac{\log 2}{ C_1K_0}, \,
\frac{\kappa}{2^4C_2D_0^2}\right\}.
\end{align}
In fact, \eqref{gron} and \eqref{E-3.101} imply that
$g^{n+1}\in Y$ and
\begin{align*}
||g^{n+1}||_{H^{k,\ell}_{\it ul}(\RR^6)}^2&\le e^{C_1K_0 T}
\Big(\|g_0\|^2_{H^{k,\ell}_{\it ul}(\RR^6)}+\frac{C_2}{\kappa}
T||g^n||^4_{L^\infty(]0, T[;\,\, H^{k,\ell}_{\it ul}(\RR^6))}\Big)
\\&
\le e^{C_1K_0 T} \Big(D_0^2+\frac{C_2}{\kappa} T2^4D_0^4\Big)\le
4D_0^2.
\end{align*}
That is, the induction hypothesis \eqref{E-3.101} is fulfilled for
$n+1$, and hence  holds for all $n$.

 For the convergence, set $w^n=g^{n}(t)-g^{n-1}(t)$, for which
\eqref{iteration} leads to
\begin{align*}
\left\{\begin{array}{ll}
\partial_t w^{n+1}
+v\cdot\nabla_x w^{n+1}+\kappa\langle |v|
\rangle^2 w^{n+1} =\Gamma_\varepsilon^{t,+} (w^n,\,
g^n)
&\\[0.3cm]
\qquad\qquad +\Gamma_\varepsilon^{t,+} (g^{n-1},\, w^{n})
-
\Gamma_\varepsilon^{t,-} (w^n,\, g^{n+1})-\Gamma_\varepsilon^{t,-}
(g^{n-1},\, w^{n+1}),
 \\[0.3cm]
w^{n+1}|_{t=0}=0.&
\end{array}\right.
\end{align*}
By the same computation as in \eqref{diff1},
we get
\begin{align*}
||w^{n+1}||_Y^2 \leq \frac 12 C_2&e^{C_1K_0 T}\frac{1}{\kappa}T
 \Big\{\| g^{n+1}\|^2_{L^\infty(]0, T[;\,\,H^{k,\ell}_{\it ul}(\RR^6))}
 +\|g^n\|^2_{L^\infty(]0, T[;\,\,
H^{k,\ell}_{\it ul}(\RR^6))}
\\&
+\|g^{n-1}\|^2_{L^\infty(]0, T[;\,\, H^{k,\ell}_{\it ul}(\RR^6))}\Big\}
\|w^{n}\|^2_{L^\infty(]0, T[;\,\, H^{k,\ell}_{\it ul}(\RR^6))},
\end{align*}
with the same constants $C_1, C_2$ and $K_0$ as above. Then, \eqref{E-3.101} and
\eqref{Tchoice} give
\[
||g^{n+1}-g^n||_Y^2 \\
\le 2^4C_2D_0^2\kappa^{-1}T
 \|g^{n}-g^{n-1}\|^2_{L^\infty(]0, T[;\,\, H^{k,\ell}_{\it ul}(\RR^6))}.
\]
Finally,  choose  $T$ smaller if necessary so that
\[
2^4C_2D_0^2\kappa^{-1}T \leq \frac 14.
\]
Then, we have proved that for any $n\geq 1,$
\begin{equation}\label{E-3.102}
||g^{n+1}-g^n||_Y\leq \frac 12\,\, ||g^{n}-g^{n-1}||_Y.
\end{equation}
Consequently, $\{g^{n}\}$ is a convergence sequence in $Y$, and the limit
\[
g^\varepsilon\in Y,
\]
is therefore a non-negative solution of the Cauchy problem
(\ref{E-Cauchy-cut-off}). The estimate (\ref{E-3.102}) also implies
the uniqueness of solutions.

By means of the mild form \eqref{mild}, it can be proved also that for each $n$,
\[
g^{n}\in C^0([0,T]; H^{k,\ell}_{\it ul}(\RR^6))
\]
and hence so is the limit $g^\varepsilon$.
The non-negativity of $g^\varepsilon$ follows because $g^n \geq 0$.
Now the  proof of Theorem \ref{E-exist-cut-off} is completed.

\section{Uniform Estimate}\label{s4}
\setcounter{equation}{0}

We now prove the convergence of approximation sequence
$\{g^\varepsilon\}$ as $\varepsilon\rightarrow0$. The first step is to
prove the uniform boundedness of this approximation sequence. Below, the constant $C$ are various constants independent
of $\varepsilon>0$. 

\begin{theo}\label{E-uniform-estimate}
Assume that $0<s<1,\,\,  \gamma>-3/2, \ \gamma +2s< 1$. Let $g_0\in H^{k,\ell}_{\it ul}(\RR^6),
g_0\geq 0$ for some $k\geq 4,\,\, l\geq 3$. Then there exists $T_* \in ]0,T_0]$
depending on $\|g_0\|_{H^{k,\ell}_{\it ul}(\RR^6)}$ but not on $\varepsilon$ such that
  if for some $0<T\leq T_0$\, ,
\begin{equation}\label{assume}
g^\varepsilon\in C^0(]0, T];\,\, H^{k,\ell}_{\it ul}(\RR^6))\cap
\cM^{k, \ell+1}(]0,T[\times\RR^6)
\end{equation}
is a non-negative solution of the Cauchy problem
(\ref{E-Cauchy-cut-off}) and if
$T_{**}=\min\{T,\,\,T_*\}$,
then it holds that
 \begin{equation}\label{E-3.103}
\|g^\varepsilon\|_{L^\infty(]0, T_{**}[;\,\, H^{k,\ell}_{\it ul}(\RR^6))}\leq 2
\|g_0\|_{H^{k,\ell}_{\it ul}(\RR^6)}.
\end{equation}
 \end{theo}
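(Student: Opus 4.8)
The plan is to prove \eqref{E-3.103} by a weighted energy estimate on \eqref{E-Cauchy-cut-off}, carried out simultaneously with the velocity moment gain supplied by the term $\kappa\la v\ra^2 g^\varepsilon$, the whole point being that every constant produced can be taken independent of $\varepsilon$. As in the proof of Lemma \ref{estimate1}, fix $a\in\RR^3$ and a multi-index with $|\alpha+\beta|\le k$, set
\[
h^\varepsilon_\ell=\phi_1(x-a)W_\ell\pa^\alpha_\beta g^\varepsilon ,
\]
apply $\phi_1(x-a)W_\ell\pa^\alpha_\beta$ to \eqref{E-Cauchy-cut-off}, and test against $\chi_j^2 S_N(D_x)^2 h^\varepsilon_\ell$ (the mollifiers $\chi_j,S_N$ are needed only because \eqref{assume} does not a priori give $h^\varepsilon_{\ell+1}\in L^2$, exactly as before). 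Letting $N\to\infty$ and $j\to\infty$ produces
\begin{align*}
\frac12\frac{d}{dt}\|h^\varepsilon_\ell\|^2+\kappa\|h^\varepsilon_{\ell+1}\|^2=\big(\phi_1(x-a)W_\ell\pa^\alpha_\beta \Gamma^t_\varepsilon(g^\varepsilon,g^\varepsilon),\, h^\varepsilon_\ell\big)+\big(G_2+G_3,\, h^\varepsilon_\ell\big),
\end{align*}
where $G_2,G_3$ are the commutators with $v\cdot\nabla_x$ and with $\kappa\la v\ra^2$, respectively, as in Lemma \ref{estimate1}. The terms $(G_2+G_3,h^\varepsilon_\ell)$ are estimated verbatim as there, absorbing their top-weight part into $\tfrac{\kappa}{4}\|h^\varepsilon_{\ell+1}\|^2$ at the price of $\tfrac{C(1+\kappa)^2}{\kappa}\|g^\varepsilon\|^2_{H^{k,\ell}_{\it ul}(\RR^6)}$; these bounds are $\varepsilon$-independent.

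The heart of the matter is the collision term. Expanding $\pa^\alpha_\beta\Gamma^t_\varepsilon(g^\varepsilon,g^\varepsilon)$ by the Leibniz rule in $x$ and translation invariance in $v$ as in \eqref{E-3.105} ff., and inserting $\phi_1(x-a),\phi_2(x-a)$ as in Lemma \ref{E-lemm3.111a}, one reduces to finitely many terms $\big(\cT_\varepsilon(\phi_1(x-a)\pa^{\alpha_1}_{\beta_1}g^\varepsilon,\phi_2(x-a)\pa^{\alpha_2}_{\beta_2}g^\varepsilon,\pa^{\beta_3}_v\mu),h^\varepsilon_\ell\big)$ with $|\alpha_1+\beta_1|+|\alpha_2+\beta_2|\le k$. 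Here one must replace the $\varepsilon$-dependent Lemma \ref{E-lemm3.111} by the $\varepsilon$-uniform upper estimate for the cutoff collision operator with mild angular singularity (Theorem \ref{upperQ}), sharpened by Lemma \ref{lem410} of Section \ref{s4}, which exploit the cancellation $g(v')-g(v)$ and handle the unregularized kinetic factor $|v-v_*|^\gamma$ (this is where $\ell\ge3$ enters, through \eqref{ele}-type Maxwellian-moment bounds, and where the cases $\gamma\ge0$ and $\gamma<0$ are separated). Moving a weight $W_{-1}$ onto the collision factor and $W_1$ onto the test function, this yields, uniformly in $\varepsilon$, a bound by $C\|g^\varepsilon\|_{H^{k,\ell}_{\it ul}(\RR^6)}\,\mathcal A^\varepsilon\,\|h^\varepsilon_{\ell+1}\|$, where $\mathcal A^\varepsilon$ is an anisotropic quantity in which the top-order factor carries at most one extra velocity weight of order $\gamma^+$ and at most $s$ extra velocity derivatives. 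Since $2s+\gamma<1$ forces $\gamma^+<1$ and $s<\tfrac12$, the weight $W_{\ell+\gamma^+}=W_{\ell+1}^{\gamma^+}W_\ell^{1-\gamma^+}$ is interpolated between $\|h^\varepsilon_\ell\|$- and $\|h^\varepsilon_{\ell+1}\|$-quantities, while the $s$-order velocity derivative costs one integer derivative that is available because the two differentiation orders add up to at most $k$ (and $k\ge4$ lets us place the low-order factor in $L^\infty_xL^2_v$ by Sobolev embedding). Young's inequality then absorbs everything into $\tfrac{\kappa}{4}\|h^\varepsilon_{\ell+1}\|^2$ plus $\tfrac{C}{\kappa}\big(\|g^\varepsilon\|^2_{H^{k,\ell}_{\it ul}(\RR^6)}+\|g^\varepsilon\|^4_{H^{k,\ell}_{\it ul}(\RR^6)}\big)$, once more with $\varepsilon$-independent constants.

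Collecting these estimates, integrating in $t$, summing over $|\alpha+\beta|\le k$ and taking $\sup_{a\in\RR^3}$ — legitimate because the right-hand bounds are uniform in $a$ and the collision operator acts only in $v$, hence does not enlarge the $x$-support — one obtains, for all $t\le T$,
\[
\|g^\varepsilon(t)\|^2_{H^{k,\ell}_{\it ul}(\RR^6)}+\kappa\|g^\varepsilon\|^2_{\cM^{k,\ell+1}(]0,t[\times\RR^6)}\le \|g_0\|^2_{H^{k,\ell}_{\it ul}(\RR^6)}+\int_0^t\Psi\!\left(\|g^\varepsilon(\tau)\|^2_{H^{k,\ell}_{\it ul}(\RR^6)}\right)d\tau ,
\]
with $\Psi$ a fixed continuous increasing (in fact polynomial) function whose coefficients depend only on $\rho,\kappa,k,\ell$, not on $\varepsilon$ nor on $T$. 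Dropping the nonnegative $\cM$-term and letting $Y$ solve $\dot Y=\Psi(Y)$ with $Y(0)=\|g_0\|^2_{H^{k,\ell}_{\it ul}(\RR^6)}$, the scalar comparison principle (applicable because $t\mapsto\|g^\varepsilon(t)\|_{H^{k,\ell}_{\it ul}}$ is continuous by \eqref{assume}) gives $\|g^\varepsilon(t)\|^2_{H^{k,\ell}_{\it ul}(\RR^6)}\le Y(t)$ on the interval of existence of $Y$; choosing $T_*\in\,]0,T_0]$ so that $Y$ stays $\le 4\|g_0\|^2_{H^{k,\ell}_{\it ul}(\RR^6)}$ on $[0,T_*]$ — a time depending only on $\|g_0\|_{H^{k,\ell}_{\it ul}(\RR^6)}$ and the fixed parameters — and putting $T_{**}=\min\{T,T_*\}$ yields \eqref{E-3.103}. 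The main obstacle is precisely the $\varepsilon$-uniform control of the collision term with the genuine kinetic factor $|v-v_*|^\gamma$: for $\gamma<0$ its singularity at $v=v_*$ must be handled jointly with the angular singularity $\theta^{-2-2s}$, while for $\gamma\ge0$ its growth must be charged against the moment gain; both succeed only under $2s+\gamma<1$, and making this quantitative and uniform in $\varepsilon$ is the content of Lemma \ref{lem410}, whose proof is deferred to Section \ref{s4}.
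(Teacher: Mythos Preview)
Your proposal is correct and follows essentially the same route as the paper: derive the energy identity for $h^{\alpha,\beta}_\ell=\phi_1(x-a)W_\ell\pa^\alpha_\beta g^\varepsilon$, split the right-hand side into the collision term $\Xi_1$ and the two commutators $\Xi_2,\Xi_3$, invoke Lemma~\ref{lem410} for the $\varepsilon$-uniform bounds, integrate in time, sum over $|\alpha+\beta|\le k$ and take $\sup_a$, then close with a nonlinear Gronwall/comparison argument. The only cosmetic differences are that the paper omits the mollifiers $\chi_j,S_N$ in Section~\ref{s4} (they are indeed needed for rigor, as you note) and that the paper writes down the explicit Gronwall solution $\|g_0\|^2 e^{C_\kappa t}/\big(1-(e^{C_\kappa t}-1)\|g_0\|^2\big)$ in place of your abstract ODE comparison with $\dot Y=\Psi(Y)$; these are equivalent. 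Your intermediate narrative about ``moving $W_{-1}$'' and interpolating $W_{\ell+\gamma^+}$ is a bit looser than what Lemma~\ref{lem410} actually delivers (it gives directly $|(\Xi_1,h^{\alpha,\beta}_\ell)|\lesssim\|g^\varepsilon\|_{H^{k,\ell}_{\it ul}}^2\sum_{|\alpha'+\beta'|\le k}\|h^{\alpha',\beta'}_{\ell+1}\|$, no interpolation needed), but since you defer the substance to that lemma this does not affect the argument.
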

\begin{rema}
The case $T_*\le T$ gives a uniform estimate
of local solutions on
the fixed time interval $[0,T_*]$
while the case $T<T_*$ gives an a priori estimate on the
existence time interval $[0,T]$ of local solutions. The latter is used for the continuation argument of
local solutions, in Subsection 4.4 below.
\end{rema}
In the following, $\rho>0,\,\kappa>0$ are fixed. Furthermore, recall
$T_0=\rho/(2\kappa)$. We start with  a solution $g^\varepsilon$ subject
to \eqref{assume} for some  $T\in\,]0,T_0]$. Put
\begin{align}\label{hepsl}
h^{\alpha,\beta}_\ell
=\phi_1(x-a)W_\ell\partial^\alpha_\beta g^\varepsilon
\end{align}
and  take the $L^2$ inner product of  it and the equation for it. As before
$\|\ \|$ and $(\ ,\ )$ stand for the $L^2(\RR^6)$ norm and inner product
respectively unless otherwise stated.
 Then we have
\begin{align}\label{E-equ-2}
\frac 12 \frac{d}{dt}&\|h^{\alpha,\beta}_\ell\|^2
 + \kappa
\| h^{\alpha,\beta}_{\ell+1}\|^2
=(\Xi, h^{\alpha,\beta}_\ell),
\end{align}
where
\begin{align*}
\Xi&=\phi_1(x-a)W_\ell\partial^\alpha_\beta \Gamma(g^\varepsilon,g^\varepsilon)
-[ \phi_1(x-a)W_\ell\partial^\alpha_\beta, v\cdot\nabla_x]g^\varepsilon
-\kappa[ \phi_1(x-a)W_\ell\partial^\alpha_\beta, \langle v\rangle^2]g^\varepsilon
\\&=\Xi_1+\Xi_2+\Xi_3.
\end{align*}
We shall derive the estimates
\begin{lemm}\label{lem410}
Assume that $ 0<s<1/2,  \gamma\ge -3/2,   \gamma+2s< 1$. Then,
\begin{align*}
&|(\Xi_1,h^{\alpha,\beta}_\ell)|\le C \|g^\varepsilon_{\ell}\|_{H^{k,\ell}_{\it ul}}^2
\sum_{|\alpha'+ \beta'|\le k}\|h^{\alpha', \beta'}_{\ell+1}\|,
\\&
|(\Xi_2+\Xi_3,h^{\alpha,\beta}_\ell)|\le C
 (1+\kappa+\|g^\varepsilon\|_{H^{k,\ell}_{\it ul}(\RR^6)})
\|g^\varepsilon\|_{H^{k,\ell}_{\it ul}(\RR^6)}
\|h^{\alpha,\beta}_{\ell+1}\|.
\end{align*}
\end{lemm}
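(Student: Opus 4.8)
The plan is to estimate the three pieces $\Xi_1,\Xi_2,\Xi_3$ of $\Xi$ separately. The commutator pieces $\Xi_2$ (with transport) and $\Xi_3$ (with the weight $\kappa\la v\ra^2$) are elementary and amount to a repetition of the treatment of $G_2,G_3$ in the proof of Lemma \ref{estimate1}; all the work is in the collision piece $\Xi_1$, where the point is to get bounds \emph{uniform in the cutoff parameter} $\varepsilon$.

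\emph{The commutator terms.} First I would expand the commutators. Since $[v\cdot\nabla_x,\pa_{v_i}]=-\pa_{x_i}$, while $W_\ell$ and $\pa^\alpha_x$ commute with $v\cdot\nabla_x$, the commutator $[\phi_1(x-a)\pa^\alpha_\beta,\,v\cdot\nabla_x]$ produces, besides harmless terms $\pa^{\alpha+e_i}_{\beta-e_i}g^\varepsilon$ of the same total order $\le k$, only the boundary term $(v\cdot\nabla_x\phi_1(x-a))\,\pa^\alpha_\beta g^\varepsilon$, which is supported in $1\le|x-a|\le 2$ and carries one extra power of $|v|$; likewise $\kappa[\phi_1(x-a)\pa^\alpha_\beta,\la v\ra^2]g^\varepsilon$ is a sum of terms of order $\le k$ with at most one extra power of $|v|$ and an explicit factor $\kappa$. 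In each case I would write $(\Xi_i,h^{\alpha,\beta}_\ell)=(W_{-1}\Xi_i,\,h^{\alpha,\beta}_{\ell+1})$, so that the spare power $W_{-1}$ exactly absorbs the extra $|v|$, and conclude by Cauchy--Schwarz that $|(\Xi_2+\Xi_3,h^{\alpha,\beta}_\ell)|\le C(1+\kappa)\,\|g^\varepsilon\|_{H^{k,\ell}_{\it ul}(\RR^6)}\,\|h^{\alpha,\beta}_{\ell+1}\|$, which is stronger than the stated bound.

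\emph{The collision term.} For $\Xi_1=\phi_1(x-a)W_\ell\pa^\alpha_\beta\Gamma^t_\varepsilon(g^\varepsilon,g^\varepsilon)$ I would first apply the Leibniz formula in $x$ and translation invariance in $v$, exactly as in the proof of Lemma \ref{E-lemm3.111}, inserting $\phi_2(x-a)$ in the second slot via $\phi_1=\phi_1\phi_2$, which reduces $(\Xi_1,h^{\alpha,\beta}_\ell)$ to a finite sum of trilinear quantities $\big(\phi_1(x-a)W_\ell\,\cT_\varepsilon(g_1,h_2,\mu_3(t)),\,h^{\alpha,\beta}_\ell\big)$ with $g_1=\phi_1(x-a)\pa^{\alpha_1}_{\beta_1}g^\varepsilon$, $h_2=\phi_2(x-a)\pa^{\alpha_2}_{\beta_2}g^\varepsilon$, $\mu_3(t)=\pa^{\beta_3}_v\mu(t)$ of Maxwellian decay, and $|\alpha_1+\beta_1|+|\alpha_2+\beta_2|+|\beta_3|\le k$. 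Since $k\ge 4$, the lower-order of $g_1,h_2$ has order $\le k-2$ and may be placed in $L^\infty_x$ by Sobolev embedding, the other remaining in $L^2_x$; when this lower-order factor is $g_1$ of order $0$ I would moreover put it in $L^1_v$ with weight $W_{(2s+\gamma)^+}$, which (since $\ell\ge 3$) is dominated by $\|g^\varepsilon\|_{H^{k,\ell}_{\it ul}}$. The whole matter then rests on a single $\varepsilon$-\emph{uniform} weighted trilinear estimate for the cutoff operator, schematically
\[
\big|(\Gamma^t_\varepsilon(U,V),W)\big|\ \le\ C\,\|U\|_{H^{2,\ell}_{\it ul}}\;\|V\|_{H^{k,\ell+(2s+\gamma)^+}_{\it ul}}\;\big(\|W\|+\|\nabla_v W\|\big),
\]
with $\|\cdot\|$ the $L^2$ norm and $C$ independent of $\varepsilon$ --- in essence the non-cutoff upper bound of Theorem \ref{upperQ}, which I would invoke. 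Applying it with $U,V$ the relevant localized, $\ell$-weighted derivatives of $g^\varepsilon$ and with $W=h^{\alpha,\beta}_\ell$, then summing over the Leibniz terms and over $|\alpha+\beta|\le k$: the weight loss $(2s+\gamma)^+<1$ is absorbed into the gained weight $W_1$ (turning $h^{\alpha,\beta}_\ell$ into $h^{\alpha,\beta}_{\ell+1}$), and the single extra $v$-derivative on $W$ keeps us within total order $k$, producing the factor $\sum_{|\alpha'+\beta'|\le k}\|h^{\alpha',\beta'}_{\ell+1}\|$ and hence the first inequality of the lemma.

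\emph{The main obstacle.} The only delicate point --- and the reason Section 4 is devoted to it --- is the $\varepsilon$-uniform control of $\cT_\varepsilon$ with the \emph{non-regularized} kinetic factor $\Phi(z)=|z|^\gamma$, at top order in the derivatives. I would split the angular integral at a fixed $\theta_0$: on $\{\theta\ge\theta_0\}$ the kernel $b_\varepsilon\le b$ is integrable with an $\varepsilon$-independent bound and the estimate of Lemma \ref{E-lemm3.111} applies unchanged; on $\{\theta<\theta_0\}$ I would use the splitting $U'_*V'-U_*V=U'_*(V'-V)+(U'_*-U_*)V$, one Taylor step, and the cancellation that the odd part of $v'-v$ integrates to $0$ over $\SS^2$, so that $\int_{\{\theta<\theta_0\}}b_\varepsilon(\cos\theta)(v'-v)\,d\sigma=O(|v-v_*|)$ uniformly in $\varepsilon$, the corresponding second-order remainder being controlled by the same mechanism. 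It is in this singular regime that the full set of hypotheses $0<s<1/2$, $\gamma>-3/2$ and $2s+\gamma<1$ is used: $\gamma>-3/2$ so that $\int|v-v_*|^{2\gamma}e^{-\rho|v_*|^2/4}dv_*\le C\la v\ra^{2\gamma}$ makes sense (cf.\ \eqref{ele}), which also forces a separate treatment of $\gamma\ge 0$ and $\gamma<0$ since $|v-v_*|^\gamma$ cannot be replaced by $\la v\ra^\gamma$; $s<1/2$ for the convergence of the relevant angular moments in the cancellation estimate; and $2s+\gamma<1$ for the weight book-keeping. I expect this singular, $\varepsilon$-uniform estimate --- considerably more subtle than its regularized analogue in \cite{amuxy-arma} --- to be the real obstacle, everything downstream of it being routine.
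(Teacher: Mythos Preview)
Your treatment of $\Xi_2,\Xi_3$ is fine and matches the paper. The gap is in $\Xi_1$, specifically at the \emph{top-order} Leibniz term $\alpha_1=\alpha,\ \beta_1=\beta$ (so $\alpha_2=\beta_2=\beta_3=0$). There your ``single $\varepsilon$-uniform trilinear bound'' cannot close: with $W=h^{\alpha,\beta}_\ell$ and $|\alpha+\beta|=k$, the factor $\|\nabla_v W\|$ in your schematic estimate is a $(k{+}1)$-st derivative and is \emph{not} dominated by $\sum_{|\alpha'+\beta'|\le k}\|h^{\alpha',\beta'}_{\ell+1}\|$; equally, invoking Proposition~\ref{upperQ} with $m=s$ would cost $2s$ extra $v$-derivatives on $G=\phi_1(x-a)\pa^\alpha_\beta g^\varepsilon$, again exceeding order $k$. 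Your Taylor-and-cancellation scheme for small angles runs into the same wall: one step of Taylor on $V'-V$ puts a $v$-derivative on $V$, which at top order you cannot afford. So the ``main obstacle'' is not merely $\varepsilon$-uniformity of a trilinear bound but the absence of any spare derivative at top order.

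The paper closes this with structure you do not use. First it rewrites $W_\ell\cT_\varepsilon(F,G,M)=A_1+A_2$ with $A_1=W_\ell Q(MF,G)$ and $A_2$ carrying the difference $M_*-M'_*$; this is not your splitting $U'_*V'-U_*V=U'_*(V'-V)+(U'_*-U_*)V$. At top order it writes $A_1=Q(\mu F,\,W_\ell G)+[W_\ell,Q(\mu F,\cdot)]G=A_{10}+A_{11}$ and uses, with $H=h^{\alpha,\beta}_\ell=W_\ell G$, the symmetrization
\[
(Q(\mu F,H),H)=-\tfrac12 D+\tfrac12\iiiint B(\mu F)_*\big[(H')^2-H^2\big],
\qquad D=\iiiint B(\mu F)_*(H'-H)^2\ge 0,
\]
so no derivative lands on $H$; the second piece is handled by the cancellation lemma of \cite{al-1} and \eqref{3.8}. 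The weight commutator $A_{11}$ is split via a second-order Taylor expansion of $W_\ell'-W_\ell$ (with the odd part of $v'-v$ integrating to $0$) plus a remainder $K_2$ which is controlled by Cauchy--Schwarz \emph{against $D$}, yielding $|K_2|\le\tfrac14 D+C\|g^\varepsilon\|_{H^{k,\ell}_{\it ul}}^2\|h^{\alpha,\beta}_{\ell+1}\|$, so that the negative $-\tfrac12 D$ absorbs it. Only for the sub-top-order terms $|\alpha_1+\beta_1|\le k-1$ does the paper invoke Proposition~\ref{upperQ} (with $m=s$), where the extra $2s<1$ derivative on $G$ stays within order $k$. The piece $A_2$ is then handled by Taylor on $M_*-M'_*$ (Maxwellian, hence harmless) with the explicit use of $s<1/2$, $\gamma>-3/2$, $2s+\gamma<1$. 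Your proposal should isolate the top-order term and supply this symmetrization/dissipation argument; without it the estimate does not close.
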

\noindent This lemma will be proved in Section 4. Now we have
\[
|(\Xi,h^{\alpha,\beta}_\ell)|\le C
((1+\kappa)^2+\|g^\varepsilon\|_{H^{k,\ell}_{\it ul}(\RR^6)}^2)\|g^\varepsilon\|_{H^{k,\ell}_{\it ul}(\RR^6)}^2+\frac{\kappa}{k^3}\sum_{|\alpha'+ \beta'|\le k}\|h^{\alpha', \beta'}_{\ell+1}\|^2
\]
whence \eqref{E-equ-2} yields
\begin{align*}
 \frac{d}{dt}&
\|h^{\alpha,\beta}_\ell(t)\|^2
 + 2\kappa 
\| h^{\alpha,\beta}_{\ell+1}\|^2
\le  \frac{C}{\kappa}((1+\kappa)^2+\|g^\varepsilon\|_{H^{k,\ell}_{\it ul}(\RR^6)}^2)\|g^\varepsilon\|_{H^{k,\ell}_{\it ul}(\RR^6)}^2+
\frac{\kappa}{k^3}\sum_{|\alpha'+ \beta'|\le k}\|h^{\alpha', \beta'}_{\ell+1}\|^2
,
\end{align*}
and after integrating over $]0,t[$,
\begin{align*}
\|h^{\alpha,\beta}_\ell&(t)\|^2
 + \kappa 
\int_0^t
\| h^{\alpha,\beta}_{\ell+1}(\tau)\|^2d\tau
\\&
\le  \|g^{\alpha,\beta}(0)\|_{H^{k,\ell}_{\it ul}(\RR^6)}^2+\frac{C}{\kappa}\int_0^t((1+\kappa)^2+\|g^\varepsilon(\tau)\|_{H^{k,\ell}_{\it ul}(\RR^6)}^2)\|g^\varepsilon(\tau)\|_{H^{k,\ell}_{\it ul}(\RR^6)}^2d\tau
\\&\quad +\frac{\kappa}{k^3}\sum_{|\alpha'+ \beta'|\le k}\int_0^t\|h^{\alpha', \beta'}_{\ell+1}(\tau)\|^2d\tau.
\end{align*}
Take the spremum with respect to $a\in\RR^3$ (see \eqref{hepsl}) and sum up over $|\alpha+\beta|\le k$
to deduce that
\begin{align*}
 \|g^\varepsilon(t)&\|_{H^{k,\ell}_{\it ul}(\RR^6)}^2
 + \kappa\|g^\varepsilon\|_{\cM^{k,\ell+1}(]0,t[\times\RR^6)}^2
\\&\le   \|g_0\|_{H^{k,\ell}_{\it ul}(\RR^6)}^2
+\frac{C}{\kappa}\int_0^t(1+\|g^\varepsilon(\tau)\|_{H^{k,\ell}_{\it ul}(\RR^6)})^2
\|g^\varepsilon(\tau)\|_{H^{k,\ell}_{\it ul}(\RR^6)}^2d\tau.\notag
\end{align*}
Then the Gronwall type inequality gives for $C_\kappa=C/\kappa$,
$$
\|g^\varepsilon(t) \|^2_{H^{k,\ell}_{\it ul}(\RR^6)}\leq
\frac{\|g_0\|^2_{H^{k,\ell}_{\it ul}(\RR^6)}e^{{C_\kappa} t}} {1-\big(e^{{C_\kappa}
t}-1\big)\|g_0\|^2_{H^{k,\ell}_{\it ul}(\RR^6)}}\,, \enskip
$$
as long as the denominator remains  positive.
We choose $T_*>0$ small enough such that
$$
\frac{e^{{C_\kappa} T_*}} {1-\big(e^{{C_\kappa}
T_*}-1\big)\|g_0\|^2_{H^{k,\ell}_{\it ul}(\RR^6)}}=4.
$$
Then
$$
T_*=\frac{1}{C_\kappa}\log\Big(1+\frac{3}{1+4\|g_0\|^2_{H^{k,\ell}_{\it ul}(\RR^6)}}\Big)
$$
is independent of $\varepsilon>0$, but depends on
$\|g_0\|_{H^{k,\ell}_{\it ul}(\RR^6)}$ and the constant $C$ which depends on
$\rho, \kappa, k$ and $l$. Now  we
have  (\ref{E-3.103}) for $T_{**} = \min(T,T_*)$.

From (\ref{E-3.103}) and (\ref{E-equ-2}), we get also, for
$\kappa>0$,
$$
\kappa  \|g^\varepsilon\|^2_{\cM^{k,\ell+1}(]0, T_{**}[\times\RR^6})
\leq 2 \|g_0\|^2_{H^{k,\ell}_{\it ul}(\RR^6)} \Big ( 1+ 2C T_{*} (1 + 2
\|g_0\|^2_{H^{k,\ell}_{\it ul}(\RR^6)}) \Big)  .
$$
We have proved Theorem \ref{E-uniform-estimate}.

Combine  Theorems \ref{E-exist-cut-off} and \ref{E-uniform-estimate} and use the compactness argument as
in  Section 4.4 of \cite{amuxy-arma}, to conclude the existence
part of Theorem \ref{E-theo-0.2}. The uniqueness part comes from Theorem 4.1 of
\cite{amuxy4-4}.
Now the main theorem \ref{E-theo1} is proved by the help of Theorem \ref{E-theo-0.2},
in the same manner 
as  in Section 4.5 of \cite{amuxy-arma}.

\section{Proof of Lemma \ref{lem410}}
In the sequel, the notation $A\lesssim B$ means that there is a 
constant $C$ independent of $A,B$ such that $A\le CB$, and similarly
for $A\gtrsim B$.
We start with
\subsection{Esimate of $\Xi_1$}
Notice that
\begin{align*}
\Xi_1&=\phi_1(x-a)W_\ell\pa^\alpha_\beta\Gamma(g^\varepsilon,g^\varepsilon)
\\&=\sum_{\begin{subarray}{l}\alpha_1+\alpha_2=\alpha
\\
 \beta_1+\beta_2+\beta_3=\beta
\end{subarray}} C^{\alpha_1, \alpha_2}_{\beta_1,
 \beta_2,
 \beta_3}\notag
 W_\ell{\cT}_\varepsilon(\phi_2(x-a)\partial^{\alpha_2}_{\beta_2}
 g^\varepsilon,\,\,\phi_1(x-a)\partial^{\alpha_1}_{\beta_1} g^\varepsilon,\,\,
 \partial_{\beta_3}\mu(t)).
 \end{align*}
 Put
 \begin{align*}
 F=\phi_2(x-a)\partial^{\alpha_2}_{\beta_2}
 g^\varepsilon,
\quad G=\phi_1(x-a)\partial^{\alpha_1}_{\beta_1}
 g^\varepsilon, \quad  M=\pa_{\beta_3}\mu,
\end{align*}
and write
 \begin{align*}
 W_\ell{\cT}_\varepsilon(F,G,M)
&=W_\ell Q(MF, G)
 + W_\ell\iint_{\RR^3\times\SS^2}B(M_*-M'_*)F_*' G'dv_*d\sigma
 \\&=A_1+A_2.
\end{align*}
{\bf Estimate of $A_1$:}

(1) The case $\alpha_1=\alpha, \beta_1=\beta$: Notice that then $\alpha_2=\beta_2=\beta_3=0$.
Write
\[
A_1=Q(MF, W_\ell G)+[W_{\ell} Q(MF, G)-Q(MF,W_\ell G)]=A_{10}+A_{11}.
\]
Here,  $F=\phi_2(x-a)g^\varepsilon$, $W_{\ell}G=h^{\alpha,\beta}_\ell$, and $M=\mu$. We start with
\begin{lemm}
It holds that
\begin{align*}
(A_{10}, &h^{\alpha,\beta}_\ell)=-\frac 12 D+ A_{101}
\end{align*}
where
\begin{align}\label{D}
D=\iiiint_{\RR^9\times\SS^2}B(\mu F)_*
 ((h')^{\alpha,\beta}_\ell-h^{\alpha,\beta}_\ell)^2dxdv dv_*d\sigma
 \ge 0
\end{align}
while  $A_{101}$ enjoys the estimate
that for $0<s<1$ and $-3/2<\gamma\le 1$, and  for any $k\ge 2$ and $\ell\ge 1$,
\begin{align*}
|A_{101}|\lesssim \|g^\varepsilon\|^2_{H^{k,l}_{\it ul}(\RR^6)}\|h^{\alpha,\beta}_{\ell+1}\|.
\end{align*}
\end{lemm}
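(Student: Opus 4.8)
The plan is to treat $A_{10}=Q(\mu F,h^{\alpha,\beta}_\ell)$ — note that in case $(1)$ one has $\alpha_1=\alpha,\ \beta_1=\beta$, hence $F=\phi_2(x-a)g^\varepsilon\ge 0$ and $W_\ell G=h^{\alpha,\beta}_\ell$ — by the usual symmetrization of the gain--loss splitting, followed by the cancellation lemma. Writing $(\,\cdot\,,\cdot\,)$ for the $L^2(\RR^6_{x,v})$ inner product and using in the gain term the involutive change of variables $(v,v_*,\sigma)\mapsto\big(v',v'_*,(v-v_*)/|v-v_*|\big)$, which has unit Jacobian and leaves the (cutoff) cross section $B$ invariant, I get
\[
(A_{10},h^{\alpha,\beta}_\ell)=\iiiint_{\RR^9\times\SS^2}B\,(\mu F)_*\,h^{\alpha,\beta}_\ell\,\big((h')^{\alpha,\beta}_\ell-h^{\alpha,\beta}_\ell\big)\,dx\,dv\,dv_*\,d\sigma .
\]
Then the elementary identity $\Psi(\Psi'-\Psi)=\tfrac12\big((\Psi')^2-\Psi^2\big)-\tfrac12(\Psi'-\Psi)^2$ with $\Psi=h^{\alpha,\beta}_\ell$ produces the term $-\tfrac12D$ with $D$ exactly as in \eqref{D}, which is $\ge 0$ because $B\ge0$ and $(\mu F)_*=\mu_*F_*\ge0$ (here $F=\phi_2(x-a)g^\varepsilon\ge0$ and $\mu>0$). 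The remaining ``cancellation part'' is
\[
A_{101}=\tfrac12\iiiint_{\RR^9\times\SS^2}B\,(\mu F)_*\,\Big(\big((h')^{\alpha,\beta}_\ell\big)^2-\big(h^{\alpha,\beta}_\ell\big)^2\Big)\,dx\,dv\,dv_*\,d\sigma .
\]

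To bound $A_{101}$ I would invoke the cancellation lemma: for fixed $x,v_*,\sigma$ the change $v\mapsto v'$ has Jacobian $\tfrac18(1+\cos\theta)$, satisfies $|v-v_*|=|v'-v_*|/\cos(\theta/2)$, and halves the deviation angle; performing it in the $\big((h')^{\alpha,\beta}_\ell\big)^2$ term, substituting $\theta\to\theta/2$ and using the half-angle identities gives
\[
A_{101}=\tfrac12\iint_{\RR^6_{x,v}}\Big(\,\int_{\RR^3}\mathbf S(v-v_*)\,(\mu F)(x,v_*)\,dv_*\Big)\,\big(h^{\alpha,\beta}_\ell(x,v)\big)^2\,dv\,dx,
\]
\[
\mathbf S(z)=2\pi|z|^\gamma\int_0^{\pi/2}\big(\cos^{-\gamma-3}(\theta/2)-1\big)\,b(\cos\theta)\,\sin\theta\,d\theta .
\]
Since $\cos^{-\gamma-3}(\theta/2)-1\sim\tfrac{\gamma+3}{8}\theta^2$ as $\theta\to0$ while $b(\cos\theta)\sim K\theta^{-2-2s}$, the $\theta$-integral converges for $0<s<1$, so $|\mathbf S(z)|\lesssim|z|^\gamma$; crucially the same bound is uniform in the cutoff parameter, because with $b$ replaced by $b_\varepsilon$ one still has $\big(\cos^{-\gamma-3}(\theta/2)-1\big)b_\varepsilon(\cos\theta)\sin\theta\lesssim\theta^{1-2s}$ on $\{\theta\ge2\varepsilon\}$ and $\lesssim\varepsilon^{-2-2s}\theta^3$ on $\{\theta\le2\varepsilon\}$, both integrating to $O(1)$ uniformly in $\varepsilon$. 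Establishing this $\varepsilon$-uniform control of the near-grazing (plateau) contribution — which is exactly where the hypothesis $s<1$ is used — is the one genuinely delicate point; the rest is routine weight bookkeeping.

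Finally I would set $K(x,v)=\int_{\RR^3}|v-v_*|^\gamma\mu(v_*)\phi_2(x-a)g^\varepsilon(x,v_*)\,dv_*$, so that, since $K\ge0$,
\[
|A_{101}|\lesssim\iint_{\RR^6}K(x,v)\big(h^{\alpha,\beta}_\ell(x,v)\big)^2\,dv\,dx\le\|K\,h^{\alpha,\beta}_\ell\|_{L^2(\RR^6)}\,\|h^{\alpha,\beta}_\ell\|_{L^2(\RR^6)} .
\]
Using the Gaussian decay of $\mu$ on $[0,T_0]$ and the elementary inequality \eqref{ele} — treating $\gamma\ge0$ and $-3/2<\gamma<0$ separately, the latter using that $|z|^\gamma\in L^2_{\rm loc}(\RR^3)$ precisely when $\gamma>-3/2$ — one gets $K(x,v)\lesssim\langle v\rangle^{\gamma^+}\|\phi_2(x-a)W_\ell g^\varepsilon(x,\cdot)\|_{L^2_v}$; taking the supremum in $x$ and invoking the Sobolev embedding $H^2_x\hookrightarrow L^\infty_x$ (which is where $k\ge2$ enters) bounds $\sup_x\|\phi_2(x-a)W_\ell g^\varepsilon(x,\cdot)\|_{L^2_v}\lesssim\|g^\varepsilon\|_{H^{k,\ell}_{\it ul}(\RR^6)}$, so that
\[
\|K\,h^{\alpha,\beta}_\ell\|_{L^2}\lesssim\|g^\varepsilon\|_{H^{k,\ell}_{\it ul}}\,\|\langle v\rangle^{\gamma^+}h^{\alpha,\beta}_\ell\|=\|g^\varepsilon\|_{H^{k,\ell}_{\it ul}}\,\|h^{\alpha,\beta}_{\ell+\gamma^+}\|\le\|g^\varepsilon\|_{H^{k,\ell}_{\it ul}}\,\|h^{\alpha,\beta}_{\ell+1}\|,
\]
the last step using $\gamma\le1$. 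Combining this with $\|h^{\alpha,\beta}_\ell\|_{L^2}\lesssim\|g^\varepsilon\|_{H^{k,\ell}_{\it ul}}$ yields $|A_{101}|\lesssim\|g^\varepsilon\|^2_{H^{k,\ell}_{\it ul}(\RR^6)}\,\|h^{\alpha,\beta}_{\ell+1}\|$, as claimed.
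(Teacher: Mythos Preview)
Your proof is correct and follows essentially the same approach as the paper: the same symmetrization identity $H(H'-H)=\tfrac12((H')^2-H^2)-\tfrac12(H'-H)^2$ to split off $-\tfrac12 D$, the same cancellation lemma from \cite{al-1} to reduce $A_{101}$ to a convolution with a kernel $\lesssim |z|^\gamma$, and the same endgame via \eqref{3.8} and the Sobolev embedding $H^2_x\hookrightarrow L^\infty_x$. Your extra detail on why the bound $|\mathbf S(z)|\lesssim |z|^\gamma$ is uniform in the cutoff parameter $\varepsilon$ (treating the plateau region $\{\theta\le 2\varepsilon\}$ separately) is a useful explicit check that the paper leaves implicit, and your use of $\langle v\rangle^{\gamma^+}$ rather than $\langle v\rangle^{\gamma}$ is a harmless weakening since $\gamma\le 1$.
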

\begin{proof}
Put $H=h^{\alpha,\beta}_\ell$. Then,
\begin{align*}
(A_{10}, &h^{\alpha,\beta}_\ell)=(Q(\mu F, W_\ell G),H)=(Q(\mu F, H),H)
\\&=
\iiiint_{\RR^9\times\SS^2}B(\mu F)_*H
(H'-H)dxdv dv_*d\sigma
\\&\quad =
-\frac 12 \iiiint_{\RR^9\times\SS^2}B(\mu F)_*
(H'-H)^2dxdv dv_*d\sigma
 \\&\quad\quad +\frac 12 \iiiint_{\RR^9\times\SS^2}B(\mu F)_*[(H')^2-(H)^2] dxdvdv_*d\sigma
\\&=-\frac 12 D+A_{101}.
\end{align*}

Thanks to the cancellation lemma in \cite{al-1}
 we get with $S(z)\lesssim |z|^\gamma$ for the inverse power potential ,
 \begin{align*}
 |A_{101}|&\lesssim  \iint_{\RR^6}|
\mu F(v_*)| \ |S(v_*)*_{v}H^2|
dv_*dx.
\end{align*}
Since
 \begin{align}\label{3.8}
\int_{\RR^3} |v-v_*|^{\gamma}(\mu F)_*dv_*\lesssim \langle v\rangle^{\gamma}
 \| F\|_{L^2_v}
 \end{align}
holds true
for $\gamma>-3/2$,
 we now have
 \begin{align*}
|A_{101}|\lesssim
 \int_{\RR^3} \|F\|_{L^2_v}\|H\|_{L^2_v}
 \|H \|_{L^2_{\gamma,v}}dx
\lesssim \|F\|_{H^2_x(L^2_v)}\|H\|\
 \|W_\gamma H \|
 \end{align*}
 Hence since  $\gamma\le 1$ is assumed,  we get
 \begin{align*}
|A_{101}|
\lesssim \| \phi_2(x-a)g^\varepsilon\|_{H^2(\RR^3_x;L^2(\RR^3_v)}
\| h^{\alpha,\beta}_\ell\|\ \|h^{\alpha,\beta}_{\ell+1}\|
\lesssim \| g^\varepsilon\|_{H^{k,\ell}_{\it ul}(\RR^6)}^2
\|h^{\alpha,\beta}_{\ell+1}\|,
\end{align*}
which proves the lemma.
\end{proof}

The estimate of $A_{11}$ is stated as follows.
  \begin{lemm}
Let $0<s <1$ and $-5/2<\gamma\le 1$. Then,  for $k\ge 0, \ell\ge 2$,
\begin{align*}
\Big|(A_{11}, h^{\alpha,\beta}_{\ell})_{L^2(\RR^6)}\Big|
&
\le\frac 14 D+C
\|g^\epsilon\|_{H^{k,\ell}_{\it ul}(\RR^6)}^2\|h^{\alpha,\beta}_{\ell+1}\|,
\end{align*}
where $D$ is defined by \eqref{D}.
\end{lemm}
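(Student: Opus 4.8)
The plan is to run three reductions: cancel the loss part of $Q$ in the commutator, polarize so that one piece is absorbable into the coercive form $D$, and then split the velocity integral by energy conservation to recover the one power of weight in $v$ that the commutator costs. For the first step I would use that the loss part of $Q$ is a pure multiplier in $v$, so it cancels in $A_{11}=W_\ell Q(\mu F,G)-Q(\mu F,W_\ell G)$ and
\[
A_{11}=\iint_{\RR^3\times\SS^2}B(v-v_*,\sigma)\,(\mu F)'_*\,(W_\ell-W_\ell')\,G'\,dv_*\,d\sigma .
\]
Writing $H:=h^{\alpha,\beta}_\ell=W_\ell G$ (so $G'=(W_\ell')^{-1}H'$) and $H'H=H(H'-H)+H^2$, I would split $(A_{11},h^{\alpha,\beta}_\ell)_{L^2(\RR^6)}=\mathrm I+\mathrm{II}$ with $\mathrm I=\iiiint B(\mu F)'_*\tfrac{W_\ell-W_\ell'}{W_\ell'}H(H'-H)$ and $\mathrm{II}=\iiiint B(\mu F)'_*\tfrac{W_\ell-W_\ell'}{W_\ell'}H^2$, the inner integrals being over $(v_*,\sigma)\in\RR^3\times\SS^2$ and the outer ones over $(x,v)$.

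For $\mathrm I$ I would start from $|W_\ell-W_\ell'|\lesssim|v-v'|\,(\langle v\rangle^{\ell-1}+\langle v'\rangle^{\ell-1})$ (integrate $\nabla W_\ell$ along the segment $[v,v']$ and use $\langle (1-t)v'+tv\rangle\le\max(\langle v\rangle,\langle v'\rangle)$) together with $|v-v'|=|v-v_*|\sin(\theta/2)$ and $W_\ell'\ge1$, so that $\bigl|\tfrac{W_\ell-W_\ell'}{W_\ell'}\bigr|\lesssim|v-v_*|\sin(\tfrac\theta2)\bigl(\langle v\rangle^{\ell-1}\langle v'\rangle^{-\ell}+\langle v'\rangle^{-1}\bigr)$. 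Cauchy--Schwarz in the non-negative measure $B(\mu F)'_*\,d\sigma\,dv_*\,dv\,dx$ then gives $|\mathrm I|\le D^{1/2}Y^{1/2}$ with $Y:=\iiiint B(\mu F)'_*\bigl(\tfrac{W_\ell-W_\ell'}{W_\ell'}\bigr)^2H^2$, the first factor being exactly $D$ after the unit-Jacobian change $(v,v_*,\sigma)\to(v',v'_*,\sigma')$ used before Lemma \ref{E-lemm3.111} (it converts $(\mu F)'_*(H'-H)^2$ into $(\mu F)_*(H-H')^2$). Since the weight difference enters $Y$ squared, it carries $\sin^2(\tfrac\theta2)$, and $\int_{\SS^2}b(\cos\theta)\sin^2(\tfrac\theta2)\,d\sigma<\infty$ for every $0<s<1$, so $Y$ has no divergent angular integral; by Young's inequality $D^{1/2}Y^{1/2}\le\tfrac14 D+Y$ it then remains only to bound $Y$ — and likewise $\mathrm{II}$ — by $C\|g^\varepsilon\|^2_{H^{k,\ell}_{\it ul}}\|h^{\alpha,\beta}_{\ell+1}\|$.

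The key point is to recover, in $Y$ and $\mathrm{II}$, the extra power of $|v-v_*|$: it must be matched on the right by $\|h^{\alpha,\beta}_{\ell+1}\|$, not by $\|h^{\alpha,\beta}_\ell\|$. I would split the $v_*$-integral via the energy identity $\langle v\rangle^2\le\langle v'\rangle^2+\langle v'_*\rangle^2$: on $\{\langle v'\rangle\gtrsim\langle v\rangle\}$ the weight factor improves to $\lesssim|v-v_*|\sin(\tfrac\theta2)\langle v\rangle^{-1}$, while on $\{\langle v'\rangle\lesssim\langle v\rangle\}$ one has $\langle v'_*\rangle\gtrsim\langle v\rangle$, so the Gaussian $\mu'_*$ (with decay rate $\ge\rho/2$ on $[0,T_0]$) absorbs every polynomial power of $\langle v\rangle$. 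After the bounded-Jacobian change $v_*\mapsto v'_*$ (legitimate as $\theta\le\pi/2$), $\int_{\SS^2}b\sin^2(\tfrac\theta2)\,d\sigma$ factors out and a Maxwellian moment estimate of the type \eqref{3.8}, $\int_{\RR^3}|v-v_*|^{\gamma+2}(\mu F)'_*\,dv_*\lesssim\langle v\rangle^{\gamma+2}\|F\|_{L^2_v}$ (the $\gamma$-range being dictated by $\mathrm{II}$, which gains only one power of $|v-v_*|$, whence $\gamma>-5/2$), leaves $\int_{\RR^3}[\,\cdot\,]\,H^2\,dv$ with $[\,\cdot\,]\lesssim\langle v\rangle\,\|F\|_{L^2_v}$ because $\gamma\le1$; hence $\int_{\RR^3}[\,\cdot\,]\,H^2\,dv\le\|F\|_{L^2_v}\int_{\RR^3}\langle v\rangle\,H^2\,dv\le\|F\|_{L^2_v}\|h^{\alpha,\beta}_\ell\|\,\|h^{\alpha,\beta}_{\ell+1}\|$. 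Integrating in $x$ with $\|F\|_{L^\infty_x(L^2_v)}\lesssim\|g^\varepsilon\|_{H^{k,\ell}_{\it ul}}$ (Sobolev embedding $H^2_x\hookrightarrow L^\infty_x$) and $\|h^{\alpha,\beta}_\ell\|\le\|g^\varepsilon\|_{H^{k,\ell}_{\it ul}}$ gives $Y,\mathrm{II}\lesssim\|g^\varepsilon\|^2_{H^{k,\ell}_{\it ul}}\|h^{\alpha,\beta}_{\ell+1}\|$, and combining with the preceding paragraph proves the lemma. One extra input is needed for $\mathrm{II}$: it carries only the first power of $\tfrac{W_\ell-W_\ell'}{W_\ell'}$, so the raw $\sigma$-integral diverges once $s\ge\tfrac12$, and I would invoke the cancellation lemma of \cite{al-1} exactly as for $A_{101}$ — writing $\tfrac{W_\ell-W_\ell'}{W_\ell'}=W_\ell(v)\bigl(W_{-\ell}(v')-W_{-\ell}(v)\bigr)$, with $\ell\ge2$ ensuring the requisite smoothness and decay of $W_{-\ell}$ — to restore the $\sin^2(\tfrac\theta2)$-type behaviour before running the weight-recovery step.

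The main obstacle is precisely this weight bookkeeping. Because a collision couples $v$, $v_*$ and $v'$, the single power of $\langle v\rangle$ released by $W_\ell-W_\ell'\sim|v-v_*|(\cdots)$ cannot be absorbed by any one factor; the dichotomy between $(W_\ell')^{-1}$ and the Gaussian $\mu'_*$ has to be balanced so that exactly one extra $\langle v\rangle$ ends up on $\|h^{\alpha,\beta}_{\ell+1}\|$ while everything else stays controlled by $\|g^\varepsilon\|^2_{H^{k,\ell}_{\it ul}}$ — and, for $s\ge\tfrac12$, this must be carried out simultaneously with the angular cancellation, which is what makes $\mathrm{II}$ genuinely harder than $Y$.
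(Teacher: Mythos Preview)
Your overall architecture---polarize so one piece is absorbed by $D$ via Cauchy--Schwarz, then bound the remainder by exploiting the extra $\sin^2(\theta/2)$---is exactly what the paper does. The difference is a single step that looks innocuous but matters: the paper applies the unit-Jacobian involution $(v,v_*,\sigma)\to(v',v'_*,\sigma')$ \emph{before} splitting, obtaining
\[
(A_{11},H)=\iiint B\,(\mu F)_*\,(W_\ell'-W_\ell)\,G\,H'\,dvdv_*d\sigma dx,\qquad G=W_\ell^{-1}H,
\]
and only then writes $H'=H+(H'-H)$, giving $K_1+K_2$. In $K_1$ the whole integrand apart from $W_\ell'-W_\ell$ is $(\mu F)_*\,W_\ell^{-1}H^2$, which depends on $\sigma$ only through $\theta$; hence the second-order Taylor expansion of $W_\ell'-W_\ell$ combined with the $\sigma_\perp\!\to\!-\sigma_\perp$ symmetry kills the dangerous first-order part and produces the $\theta^2$ gain for all $0<s<1$. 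The piece $K_2$ is handled by Cauchy--Schwarz against $D$, exactly as your $\mathrm I$.

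Your decomposition instead keeps the Maxwellian primed, and this is where $\mathrm{II}$ develops a real gap. The cancellation lemma of \cite{al-1} treats integrals of the form $\iint B\,[f(v'_*)-f(v_*)]\,dv_*d\sigma$; your $\mathrm{II}$ has the difference $W_{-\ell}(v')-W_{-\ell}(v)$ sitting next to a factor $(\mu F)(v'_*)$ that itself depends on $\sigma$, so neither the cancellation lemma nor the symmetry argument applies: under $\sigma_\perp\to-\sigma_\perp$ the point $v'_*$ moves, and the first-order Taylor term no longer integrates to zero. If you try to cure this by the pre-post change, $\mathrm{II}$ becomes $\iiint B\,(\mu F)_*\,\tfrac{W_\ell'-W_\ell}{W_\ell}\,(H')^2$, and now $(H')^2$ spoils the symmetry for the same reason. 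Either way you are forced back to the paper's split, where the unprimed $H^2$ is what makes the cancellation go through. A smaller point: in your bound for $Y$ the $\sigma$-integral does not literally ``factor out'' since $(\mu F)'_*$ depends on $\sigma$; the $v_*\mapsto v'_*$ change does have Jacobian $\tfrac18(1+\cos\theta)\ge\tfrac18$ on $\theta\le\pi/2$, and together with $|v-v_*|\sim|v-v'_*|$ the estimate can be pushed through, but it is noticeably heavier than the paper's direct route with $(\mu F)_*$ unprimed.
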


\begin{proof}
With $W_\ell G=h^{\alpha,\beta}_\ell=H$,
\begin{align*}
\big(W_{\ell} & \,\, Q(MF,\,\,G)-Q(MF,\,\,W_\ell  \,\,
G),\,\,\, H\big)_{L^2(\RR^3_v)}
\\&=
 \iiint_{\RR^6\times \SS^2}B(W_\ell'-W_\ell)
 (\mu F)_*G H'dvdv_*d\sigma
 \\&=\iiint_{\RR^6\times \SS^2}B(W_\ell'-W_\ell)
 (\mu F)_*G Hdvdv_*d\sigma
 \\&\quad+\iiint_{\RR^6\times \SS^2}B(W_\ell'-W_\ell)
 (\mu F)_*G (H'-H)dvdv_*d\sigma
 \\&=K_1+K_2.
 \end{align*}
 For $K_1$, we use the Taylor formula of second order
\[
W_\ell'-W_\ell = \nabla \Big( W_\ell\Big)\cdot (v'-v)
+ \frac{1}{2} \int_0^1 \nabla^2 \Big( W_\ell(v_\tau ) \Big)d\tau(v'-v)^{\otimes 2 }
\]
and write $K_1 = K_{1,1} + K_{1,2}$ in terms of this decomposition. Note that
\[ v' -v = \frac{|v-v_*|}{2} (\sigma-(k\cdot \sigma) k)  + \frac{|v-v_*|}{2}
( k\cdot \sigma-1)k,
\]
where
$
k=(v-v_*)/|v-v_*|.
$
Then, since  it follows from the symmetry that the integral corresponding to the first term vanishes,
we have, using \eqref{3.8},
\begin{align*}
|K_{1,1}|&=\Big|\iiint b(\cos\theta)|v-v_*|^{\gamma+1}(1-\cos\theta)\Big(\nabla(W_\ell)\cdot k\Big)
(\mu F)_*G Hdvdv_*d\sigma\Big|
\\&\lesssim
\int \Big\{\int |v-v_*|^{\gamma+1}(\mu F)_*dv_*\Big\}|W_{\ell-1}G H|dv
\\&\lesssim
\|F\|_{L^2_v}\int W_{\ell+\gamma}|G H|dv
\\&\lesssim
\|F\|_{L^2_v}\|W_\ell G\|_{L^2_v}\|W_\gamma H\|_{L^2_v}.
\end{align*}

Next,  let $\ell\ge 2$. Then,  since the energy conservation property
$|v|^2+|v_*|^2=|v'|^2+|v'_*|^2$ implies
\[
|(\nabla^2 W_\ell)(v_\tau)| \lesssim W_{\ell-2}'+W_{\ell-2}
\lesssim (W_{\ell-2})_*+W_{\ell-2}\lesssim (W_{\ell-2})_*W_{\ell-2},
\]
and again using \eqref{3.8}, we get
\begin{align*}
|K_{1,2}|&\lesssim
\Big|\iiint b(\cos\theta)\theta^2|v-v_*|^{\gamma+2}((W_{\ell-2})_*W_{\ell-2})
(\mu F)_*G Hdvdv_*d\sigma\Big|
\\&\lesssim
\int \Big\{\int |v-v_*|^{\gamma+2}(W_{\ell-2}\mu F)_*dv_*\Big\}|W_{\ell-2}G H|dv
\\&\lesssim
\|F\|_{L^2_v}\int W_{\ell+\gamma}|G H|dv
\\&\lesssim
\|F\|_{L^2_v}\|W_\ell G\|_{L^2_v}\|W_\gamma H\|_{L^2_v}.
\end{align*}
Thus, by a similar computation of $A_{101}$, we conclude
\begin{align*}
\int |K_{1}|dx&\le\int |K_{11}|dx+\int |K_{12}|dx
\\&\lesssim
\|F\|_{H^2_x(L^2_v)}\|W_\ell G\|\ \|W_\gamma H\|
\lesssim \| g^\varepsilon\|_{H^{k,\ell}_{\it ul}(\RR^6)}^2
\|h^{\alpha,\beta}_{\ell+1}\|.
\end{align*}

We shall estimate $K_2$. By
 the Cauchy-Schwarz inequality and again by \eqref{3.8},
 \begin{align*}
 (\int|K_{2}|&dx)^2\le  \iiint_{\RR^6\times \SS^2}B
 (\mu F)_*| H'-H|^2dvdv_*d\sigma dx
 \\&\times
  \iiint_{\RR^6\times \SS^2}B(W_\ell'-W_\ell)^2
 (\mu F)_*G^2 dvdv_*d\sigma dx
 \\&\lesssim D\Big(\iiint_{\RR^6\times \SS^2}b(\cos\theta)\theta^2|v-v_*|^{\gamma+2}
 (W_{\ell-1}^2\mu F)_*(W_{\ell-1}G)^2 dvdv_*d\sigma dx\Big)
  \\&\lesssim D\int\Big(\int_{\RR^3}|v-v_*|^{\gamma+2}
 (W_{\ell-1}^2\mu F)_*dv_*\Big)(W_{\ell-1}G)^2 dv dx\Big)
 \\&\lesssim
D \|F\|_{H^2_x(L^2_v)}\|W_\ell G\|\ \|W_{\ell+\gamma}G\|
\\&\lesssim D\| g^\varepsilon\|_{H^{k,\ell}_{\it ul}(\RR^6)}^2
\|h^{\alpha,\beta}_{\ell+1}\|
 \end{align*}
 where $D$ is as in \eqref{D}. We now obtain
 \begin{align*}
 \int|K_{2}|dx\le \frac 14 D+ C\| g^\varepsilon\|_{H^{k,\ell}_{\it ul}(\RR^6)}^2
\|h^{\alpha,\beta}_{\ell+1}\|.
 \end{align*}
 This ends the proof of the lemma.
 \end{proof}

\noindent
{\bf Estimate of $A_1$ continued:
(2) The case $|\alpha_1+ \beta_1|\le k-1$.}
We shall establish
\begin{lemm}\label{lem3.2}
Let $0<s<1/2$ and $\gamma>\max\{-3, -2s-3/2\}$. Then, for $k\ge 4, \ell\ge 0$ and
for $|\alpha_1+ \beta_1|\le k-1$,
\begin{align*}
|(A_1, h^{\alpha,\beta}_\ell)|
\lesssim
\|g^\varepsilon\|_{H^{k,\ell}_{\it ul}}^2\|h^{\alpha,\beta}_{\ell+1}\|.
 \end{align*}
 \end{lemm}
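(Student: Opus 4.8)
The plan is to reduce $(A_1,h^{\alpha,\beta}_\ell)$ to the pointwise–in–$x$ non-cutoff upper bound for the collision operator and then to run the same $x$–Sobolev bookkeeping as in the proof of Lemma \ref{E-lemm3.111}, exploiting that in this case $G=\phi_1(x-a)\pa^{\alpha_1}_{\beta_1}g^\varepsilon$ carries at most $k-1$ derivatives while $MF$ (with $F=\phi_2(x-a)\pa^{\alpha_2}_{\beta_2}g^\varepsilon$, $M=\pa_{\beta_3}\mu$) still decays super-exponentially in $v$. Recalling $A_1=W_\ell Q(MF,G)$, I would pair with $H=h^{\alpha,\beta}_\ell$ by Cauchy--Schwarz in $v$, so as to keep $H$ in $L^2_v$:
\[
|(A_1,h^{\alpha,\beta}_\ell)_{L^2(\RR^6)}|\le\int_{\RR^3_x}\|W_\ell Q(MF,G)(x,\cdot)\|_{L^2(\RR^3_v)}\,\|h^{\alpha,\beta}_\ell(x,\cdot)\|_{L^2(\RR^3_v)}\,dx .
\]
Unlike in case (1), this pairing produces no coercive $-\tfrac12 D$ term, but none is needed here: the at most $2s<1$ extra $v$-derivatives that the non-cutoff operator costs can now be charged to $G$, whose order is $\le k-1$, so that the result stays inside $H^{k}$.

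For the factor $\|W_\ell Q(MF,G)(x,\cdot)\|_{L^2_v}$ I would invoke the non-cutoff upper bound for $Q$ (Theorem \ref{upperQ}, the $\varepsilon$-uniform non-cutoff counterpart of Lemma \ref{E-lemm3.111}, obtained by moving $W_\ell$ inside $Q$ and Taylor-expanding $W_\ell'-W_\ell$ as in the estimate of $A_{11}$, combined with the cancellation lemma of \cite{al-1} as for $A_{101}$), in the form
\[
\|W_\ell Q(MF,G)(x,\cdot)\|_{L^2_v}\lesssim \|MF(x,\cdot)\|_{L^1_{(\gamma+2s)^+}(\RR^3_v)}\,\|W_\ell G(x,\cdot)\|_{H^{2s}_{(\gamma+2s)^+}(\RR^3_v)} .
\]
Since $M$ decays super-exponentially, $\|MF(x,\cdot)\|_{L^1_{(\gamma+2s)^+}_v}\lesssim\|\phi_2(x-a)\pa^{\alpha_2}_{\beta_2}g^\varepsilon(x,\cdot)\|_{L^2_v}$, where the restriction $\gamma>-2s-3/2$, i.e.\ $\gamma+2s>-3/2$, is exactly what keeps the relevant $\int|v-v_*|^{2(\gamma+2s)}M(v_*)\,dv_*$ finite, as in \eqref{ele}--\eqref{3.8}; and because $\gamma+2s<1$ (so $W_{(\gamma+2s)^+}\lesssim W_1$) and $|\alpha_1+\beta_1|+2s<|\alpha_1+\beta_1|+1\le k$, one has $\|W_\ell G(x,\cdot)\|_{H^{2s}_{(\gamma+2s)^+}_v}\lesssim\sum_{|\alpha'+\beta'|\le k}\|\phi_1(x-a)W_{\ell+1}\pa^{\alpha'}_{\beta'}g^\varepsilon(x,\cdot)\|_{L^2_v}$. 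Thus exactly one of the three $x$-factors carries the extra moment $W_{\ell+1}$, which is all the statement allows.

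It then remains, exactly as in the proof of Lemma \ref{E-lemm3.111}, to put one of the three $x$-factors in $L^\infty(\RR^3_x)$ via $H^2_x\hookrightarrow L^\infty_x$ and the other two in $L^2(\RR^3_x)$. The factor $h^{\alpha,\beta}_\ell$ (top order) must go in $L^2_x$; the $W_\ell G$-factor is taken in $L^\infty_x$ when $|\alpha_1+\beta_1|\le k-3$, and otherwise (then $|\alpha_2+\beta_2+\beta_3|\le 2$) the $MF$-factor is taken in $L^\infty_x$; in either case the $L^\infty_x$ factor uses at most $\max\{k-1,4\}=k$ derivatives, thanks to $k\ge4$. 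Taking $\sup_{a\in\RR^3}$ and using that $\phi_1$ and $\phi_2$ define equivalent $H^{k,\ell}_{\it ul}$-norms, every Leibniz contribution is $\lesssim\|g^\varepsilon\|_{H^{k,\ell}_{\it ul}}^2\,\|h^{\alpha',\beta'}_{\ell+1}\|$ with $|\alpha'+\beta'|\le k$, and summing the finitely many such terms yields Lemma \ref{lem3.2}. The only genuinely delicate point is the velocity-variable estimate displayed above with the \emph{non-regularized} kinetic factor $|v-v_*|^\gamma$ (the case $\gamma<0$ having to be handled separately), and it is precisely there that the full strength of $0<s<1/2$, $\gamma>-2s-3/2$ and $\gamma+2s<1$ is used, in the spirit of the discussion after Remark \ref{rem13}; the $x$-side is routine once $k\ge4$.
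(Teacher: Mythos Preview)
Your overall plan—invoke Proposition~\ref{upperQ} pointwise in $x$ and then run the $H^2_x\hookrightarrow L^\infty_x$ bookkeeping of Lemma~\ref{E-lemm3.111}—is the paper's plan too. The difference is in \emph{how} Proposition~\ref{upperQ} is deployed, and this difference matters.

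The paper does not Cauchy--Schwarz in $v$ first. It writes
\[
(A_1,h^{\alpha,\beta}_\ell)=(Q(MF,G),\,W_\ell h^{\alpha,\beta}_\ell)_{L^2_v}
\]
and applies Proposition~\ref{upperQ} directly to this trilinear form, with $m=s$ and the proposition's weight parameter equal to $\ell-1$. That choice puts the weight $-(\ell-1)$ on the third slot, so the $h$-factor becomes $\|W_\ell h^{\alpha,\beta}_\ell\|_{L^2_{-\ell+1}}=\|h^{\alpha,\beta}_{\ell+1}\|_{L^2_v}$: the extra $v$-moment lands precisely on the \emph{top-order} factor, and one obtains
\[
\int_{\RR^3}\|F\|_{L^2_v}\,\|G\|_{H^{2s}_\ell(\RR^3_v)}\,\|h^{\alpha,\beta}_{\ell+1}\|_{L^2_v}\,dx,
\]
exactly the shape of the lemma. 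The $x$-side case split is then on $|\alpha_2+\beta_2|\le 2$ versus $|\alpha_2+\beta_2|>2$.

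Your route—Cauchy--Schwarz, then bound $\|W_\ell Q(MF,G)\|_{L^2_v}$—costs you twice. First, you must commute $W_\ell$ through $Q$ and estimate the commutator separately; the paper avoids this entirely because Proposition~\ref{upperQ} already carries an arbitrary weight parameter. Second, and more to the point, your extra moment $W_{(\gamma+2s)^+}\lesssim W_1$ now sits on $G$, not on $h^{\alpha,\beta}_\ell$. Your third factor is only $\|h^{\alpha,\beta}_\ell\|\lesssim\|g^\varepsilon\|_{H^{k,\ell}_{\it ul}}$, while the $G$-factor (after the $H^{2s}_v$ norm) produces $\sum_{|\alpha'+\beta'|\le k}\|h^{\alpha',\beta'}_{\ell+1}\|$. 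The resulting bound
\[
\|g^\varepsilon\|_{H^{k,\ell}_{\it ul}}^2\sum_{|\alpha'+\beta'|\le k}\|h^{\alpha',\beta'}_{\ell+1}\|
\]
is exactly what Lemma~\ref{lem410} uses, so your argument does close the section; but it does not give Lemma~\ref{lem3.2} \emph{as stated}, whose right-hand side is the single term $\|h^{\alpha,\beta}_{\ell+1}\|$. Your sentence ``summing the finitely many such terms yields Lemma~\ref{lem3.2}'' is therefore not quite right. The fix is simply to skip the initial Cauchy--Schwarz step and use the trilinear form of Proposition~\ref{upperQ} with its built-in weight.
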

The proof is based on the following upper bound estimate of $Q$ established in Proposition 2.9 of \cite{amuxy4-4}.
\begin{prop}\label{upperQ}
Let $0<s<1$ and $\gamma>\max\{-3, -2s-3/2\}$. Then for any $\ell\in \RR$ and $m\in[s-1, s]$,
\begin{align*}
\Big|\Big(Q(f,g) , h \Big)_{L^2_v}\Big|
\lesssim
\Big( \|f\|_{L^1_{\ell^++ (\gamma+2s)^+}}
  +
\|f \|_{L^2}\Big)\|g\|_{H^{\max\{s+m, (2s-1+\epsilon)^+\}}_{\ell^++ (\gamma+2s)^+}} \|h\|_{H^{s-m}_{-\ell}},
\end{align*}
for any $\epsilon>0$.
\end{prop}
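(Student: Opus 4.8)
The plan is to derive Proposition \ref{upperQ} from four ingredients: (i) the weak, trilinear form of $Q$; (ii) Bobylev's identity, which turns the non-integrable angular singularity of $b$ into a gain of $2s$ velocity derivatives; (iii) a Hardy--Littlewood--Sobolev argument absorbing the kinetic factor $|v-v_*|^\gamma$; and (iv) an elementary transfer of the weight $W_\ell$ through the collision. First, by the pre-post collisional change of variables $(v,v_*,\sigma)\mapsto(v',v'_*,(v-v_*)/|v-v_*|)$, an involution with unit Jacobian leaving $B$ in \eqref{B} invariant (with the usual extension of the cross section over the full sphere), one has
\begin{align*}
\big(Q(f,g),h\big)_{L^2_v}=\iiint_{\RR^3\times\RR^3\times\SS^2}\Phi(|v-v_*|)\,b(\cos\theta)\,f(v_*)\,g(v)\,\big(h(v')-h(v)\big)\,dv\,dv_*\,d\sigma.
\end{align*}
I would then split the $v_*$-integral into $\{|v-v_*|\le1\}$, where $\Phi$ is locally integrable since $\gamma>-3$, and $\{|v-v_*|\ge1\}$, where $\Phi\le|v-v_*|^{\gamma^+}\lesssim\langle v\rangle^{\gamma^+}\langle v_*\rangle^{\gamma^+}$; the factor $\langle v\rangle^{\gamma^+}$ is subsequently swallowed by the order-$2s$ operator coming from (ii), producing the net weight loss $(\gamma+2s)^+$ on $g$, while its $v_*$-counterpart feeds the $L^1_{\ell^++(\gamma+2s)^+}$ norm of $f$.

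On $\{|v-v_*|\le1\}$ (only an issue when $\gamma<0$) the kinetic singularity is passed onto $f$ by the convolution estimate $\|(|\cdot|^{\gamma}\mathbf 1_{|\cdot|\le1})*|f|\|_{L^{q}}\lesssim\|f\|_{L^1}+\|f\|_{L^2}$ for a suitable exponent $q$, combined with Hölder's inequality and the Sobolev embedding $H^{s}\hookrightarrow L^{6/(3-2s)}$ applied to $g$ (or $h$); this is exactly where the hypothesis $\gamma>-2s-3/2$ is used, and where the term $\|f\|_{L^2}$ on the right side originates. As for weights, one inserts $W_\ell$ and moves it through the collision by the bracket inequalities (such as $\langle v\rangle\lesssim\langle v'\rangle\langle v'_*\rangle$) and the energy relation $|v'|^2+|v'_*|^2=|v|^2+|v_*|^2$, landing the weight on $g$ and $h$ with the exponents shown in the statement (the $v_*$-weights being absorbed into the $L^1$-norm of $f$), up to a commutator of strictly lower order in $s$ that is handled by the same scheme.

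The core is the gain of $2s$ derivatives, for which I would invoke Bobylev's identity for Maxwellian molecules ($\Phi\equiv1$; the general $\Phi$ reduces to this case via the remainder already estimated above): with $\xi^\pm=\tfrac12(\xi\pm|\xi|\sigma)$,
\begin{align*}
\widehat{Q(f,g)}(\xi)=\int_{\SS^2}b\Big(\tfrac{\xi}{|\xi|}\cdot\sigma\Big)\Big[\hat f(\xi^-)\hat g(\xi^+)-\hat f(0)\hat g(\xi)\Big]d\sigma.
\end{align*}
Writing $\hat f(\xi^-)\hat g(\xi^+)-\hat f(0)\hat g(\xi)=\hat f(\xi^-)\big(\hat g(\xi^+)-\hat g(\xi)\big)+\big(\hat f(\xi^-)-\hat f(0)\big)\hat g(\xi)$ and pairing with $\overline{\hat h(\xi)}$ via Plancherel: in the second summand, the cancellation $\int_{\SS^2}b\,\nabla\hat f(0)\cdot\xi^-\,d\sigma=0$, together with $b(\cos\theta)\approx\theta^{-2-2s}$, $|\xi^-|\sim|\xi|\sin(\theta/2)$ and $s<1$, yields the clean bound $\langle\xi\rangle^{2s}\|f\|_{L^1_2}|\hat g(\xi)||\hat h(\xi)|$, and the split $\langle\xi\rangle^{2s}=\langle\xi\rangle^{s+m}\langle\xi\rangle^{s-m}$ gives $\|g\|_{H^{s+m}}\|h\|_{H^{s-m}}$; in the first summand, the contribution of $\theta$ bounded away from $0$ is a bounded cutoff operator, while the grazing contribution is controlled by Cauchy--Schwarz in $\sigma$ against $\int_{\SS^2}b\,|\hat g(\xi^+)-\hat g(\xi)|^2\,d\sigma$, which is dominated by the anisotropic dissipation functional of $g$ and hence by a weighted $H^{\max\{s+m,(2s-1+\epsilon)^+\}}$ norm — the exponent $(2s-1+\epsilon)^+$ and the $\max$ in the statement coming precisely from re-splitting $\langle\xi\rangle^{2s}$ unevenly in this regime so as never to demand negative-order regularity of $g$.

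The main obstacle is this last step. Establishing the sharp $2s$-gain with the \emph{asymmetric} allocation $H^{s+m}\times H^{s-m}$ of the derivatives while charging $f$ only through $\|f\|_{L^1}+\|f\|_{L^2}$ is delicate because the difficulty concentrates in the near-grazing regime $\xi^+\approx\xi$, where the cross term $\hat f(\xi^-)(\hat g(\xi^+)-\hat g(\xi))$ must be estimated without any pointwise differentiability of $\hat g$; this is what forces the threshold $(2s-1+\epsilon)^+$ and breaks the symmetry between $g$ and $h$, and it is the place where the precise structure \eqref{B} of the cross section, in particular $0<s<1$, is indispensable.
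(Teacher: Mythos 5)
The paper does not prove Proposition \ref{upperQ}: it is imported verbatim from Proposition 2.9 of \cite{amuxy4-4} and used as a black box in the proof of Lemma \ref{lem3.2}, so there is no internal argument to compare yours against. Judged on its own, your roadmap --- split off the kinetic singularity at $|v-v_*|\le 1$, pass to Fourier side via Bobylev for the Maxwellian core, exploit second-order cancellation at grazing angles, then distribute $\langle\xi\rangle^{2s}$ asymmetrically between $g$ and $h$ --- is the right kind of scheme, and you correctly identify the two pressure points: the transfer of the kinetic factor $|v-v_*|^\gamma$, and the $2s$-derivative gain with the index $\max\{s+m,(2s-1+\epsilon)^+\}$.

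But as written there are genuine gaps, one of which you concede. First, your bound on the loss-type term $(\hat f(\xi^-)-\hat f(0))\hat g(\xi)$ charges $\|f\|_{L^1_2}$ through a second-order Taylor remainder (needed to integrate $\theta^{-2-2s}\cdot\theta^2\cdot\theta\,d\theta$ when $s$ is close to $1$), while the statement only makes $\|f\|_{L^1_{\ell^++(\gamma+2s)^+}}+\|f\|_{L^2}$ available; for $\ell=0$ and $\gamma\le-2s$ the $L^1$-weight is zero, so this step is unjustified as it stands and needs either an interpolation with the $L^2$-norm of $f$ or a different splitting. Second and more centrally, for the cross term $\hat f(\xi^-)(\hat g(\xi^+)-\hat g(\xi))$ you invoke ``the anisotropic dissipation functional of $g$'' and then assert the $\max$ and the $\epsilon$ arise from an ``uneven re-splitting of $\langle\xi\rangle^{2s}$''; this describes the desired conclusion rather than proving it, and you flag it yourself as the main obstacle. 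Supplying it requires, at minimum, a weight-commutator estimate for $W_\ell$ against fractional derivatives together with a Littlewood--Paley decomposition of $g$, which is precisely where the threshold $(2s-1+\epsilon)^+$ and the failure of symmetry between $g$ and $h$ come from. Finally, the claim that the general $\Phi$ ``reduces to the Maxwellian case via the remainder already estimated above'' understates the issue: the factorization $|v-v_*|^\gamma\approx\langle v\rangle^\gamma\langle v_*\rangle^\gamma$ breaks down near $v=v_*$, and treating the resulting overlap of the kinetic and angular singularities under only $\gamma>-2s-3/2$ and with only $\|f\|_{L^1}+\|f\|_{L^2}$ control on $f$ is exactly the substance of the cited Proposition 2.9. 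In short: a credible sketch, but the pieces left as sketches are the ones that constitute the proof.
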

\noindent Admit this and put
\[
 F=\phi_2(x-a)\partial^{\alpha_2}_{\beta_2}
 g^\varepsilon,
\quad G=\phi_1(x-a)\partial^{\alpha_1}_{\beta_1}
 g^\varepsilon, \quad  M=\pa_{\beta_3}\mu.
\]
Recall the norm and inner product of $L^2(\RR^6)$ are denoted by $\| \ \|$ and $(\ ,\ )$
respectively. Use the above theorem for $m=s$ and $\alpha=\ell-1$ to deduce
\begin{align*}
&|(A_{1},h^{\alpha,\beta}_\ell)|=|W_\ell Q(MF,G),h^{\alpha,\beta}_\ell)|
\\&\lesssim
\int_{\RR^3}
\Big( \|MF\|_{L^1_{\ell-1+ (\gamma+2s)^+}}
  +
\|MF \|_{L^2_{\ell-2}}\Big)\|G\|_{H^{2s}_{\ell-1+  (\gamma+2s)^+}}
\| W_{\ell}h^{\alpha,\beta}_{\ell}\|_{L^2_{-\ell+1}} dx
\\&\lesssim
\int_{\RR^3_v}\|F\|_{L^2_v}
\| G\|_{H^{2s}_{\ell}(\RR^3_v)}
 \|h^{\alpha,\beta}_{\ell+1}\|_{L^2_v}dx
\end{align*}
where $ \gamma+2s< 1$ is assumed.

Suppose $|\alpha_2+\beta_2|\le 2$. Then,
\begin{align*}
|(A_{1},h^{\alpha,\beta}_\ell)|&\lesssim
\|\phi_2(x-a)\partial^{\alpha_2}_{\beta_2}
   g^\varepsilon\|_{H^2_x(L^2_v)}\|\phi_1(x-a)W_{\ell}\partial^{\alpha_1}_{\beta_1} g^\varepsilon\|_{L^2_x(H^{2s}_v)}\
  \|h^{\alpha,\beta}_{\ell+1}\|
 \\&\lesssim
 \|g^\varepsilon\|_{H^{k,\ell}_{\it ul}}^2\|h^{\alpha,\beta}_{\ell+1}\|,
 \end{align*}
 where  $|\alpha_1+\beta_1|+2s\le |\alpha_1+\beta_1|+1\le k$  was taken into account.

On the other hand let $|\alpha_2+\beta_2|>2$.
Then $|\alpha_1+\beta_1|\le k-1-|\alpha_2+\beta_2|\le k-4$ holds and
\begin{align*}
|(A_{1},h^{\alpha,\beta}_\ell)|&\lesssim
\|\phi_2(x-a)\partial^{\alpha_2}_{\beta_2}
  g^\varepsilon\|\ \|\phi_1(x-a)W_{\ell}\partial^{\alpha_1}_{\beta_1} g^\varepsilon\|_{H^2_x(H^{2s}_{v})}
 \|h^{\alpha,\beta}_{\ell+1}\|
\\&\lesssim
\|g^\varepsilon\|_{H^{k,\ell}_{\it ul}}^2\|h^{\alpha,\beta}_{\ell+1}\|.
 \end{align*}
 Now the proof of Lemma \ref{lem3.2} is complete.

\medskip
{\bf Estimate of $A_2$.}
We shall prove
\begin{lemm}\label{lem3.4}
Let $0<s<1/2$, $\gamma>-3/2$, and $2s+\gamma<1$. Then for $k>3, \ell>5/2$
\begin{align*}
|(A_2,h^{\alpha,\beta}_{\ell})|
\lesssim
\|g^\varepsilon\|_{H^{k,\ell}_{\it ul}(\RR^6)}.
\end{align*}
\end{lemm}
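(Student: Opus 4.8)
The plan is to treat $A_2$ in the spirit of the estimates of $A_{101}$ and $K_1$ above, the new feature being that in $A_2$ the Maxwellian-type weight enters through the \emph{difference} $M_*-M'_*$ rather than a single factor $M_*$; since this difference is $O(\theta)$, it is exactly what lets us absorb the angular singularity of $b$ with no cancellation needed in $G$. First I would move one power of the velocity weight onto the test function: since $s>0$ and $2s+\gamma<1$ force $\gamma<1$, we may write $(A_2,h^{\alpha,\beta}_\ell)=(W_{-1}A_2,\,h^{\alpha,\beta}_{\ell+1})$ (recalling $h^{\alpha,\beta}_{\ell+1}=W_1h^{\alpha,\beta}_\ell$), so that by Cauchy--Schwarz it suffices to prove
\[
\|W_{-1}A_2\|_{L^2(\RR^6)}\ \lesssim\ \|g^\varepsilon\|^2_{H^{k,\ell}_{\it ul}(\RR^6)},
\]
which gives the bound needed in Lemma~\ref{lem410}, i.e.\ $|(A_2,h^{\alpha,\beta}_\ell)|\lesssim\|g^\varepsilon\|^2_{H^{k,\ell}_{\it ul}}\|h^{\alpha,\beta}_{\ell+1}\|$. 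Here $W_{-1}A_2=W_{\ell-1}\iint_{\RR^3\times\SS^2}B_\varepsilon(v-v_*,\sigma)(M_*-M'_*)F'_*G'\,dv_*d\sigma$ with $F=\phi_2(x-a)\partial^{\alpha_2}_{\beta_2}g^\varepsilon$, $G=\phi_1(x-a)\partial^{\alpha_1}_{\beta_1}g^\varepsilon$, $M=\partial_{\beta_3}\mu(t)$.

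The heart of the matter is a uniform-in-$\varepsilon$ pointwise bound for the kernel. Using the mean value theorem $|M_*-M'_*|\le|v'_*-v_*|\sup_{w\in[v_*,v'_*]}|\nabla M(w)|$, the geometric identity $|v'_*-v_*|=|v-v_*|\sin(\theta/2)\le\tfrac12|v-v_*|\theta$, the Gaussian decay $|\nabla M(w)|\lesssim e^{-\tilde\rho\langle w\rangle^2}$ (with some fixed $\tilde\rho>0$ for $0\le t\le T_0$), and the bound $b_\varepsilon(\cos\theta)\le C\theta^{-2-2s}$ uniformly in $\varepsilon$ (legitimate since on $\{\theta\le2\varepsilon\}$ one has $b_\varepsilon=b(\cos\varepsilon)\lesssim\varepsilon^{-2-2s}\lesssim\theta^{-2-2s}$), I would show
\[
B_\varepsilon(v-v_*,\sigma)\,|M_*-M'_*|\ \lesssim\ |v-v_*|^{\gamma+1}\,\theta^{-1-2s}\,\big(e^{-c\langle v_*\rangle^2}+e^{-c\langle v'_*\rangle^2}\big),
\]
after which the angular integral converges because $\int_{\SS^2}\theta^{-1-2s}\,d\sigma\sim\int_0^{\pi/2}\theta^{-2s}\,d\theta<\infty$ precisely when $0<s<1/2$ --- the surface measure on $\SS^2$ supplying the extra power of $\theta$, which is where that restriction on $s$ is used. \textbf{This is the step I expect to be the main obstacle:} one must retain \emph{both} the gain of a power $\theta$ and \emph{genuine} Gaussian decay in a velocity variable, even though the intermediate point $w\in[v_*,v'_*]$ can be far from either endpoint when $|v-v_*|$ is large; I would deal with this by invoking the change of variables $v_*\mapsto v'_*$ (at fixed $v,\sigma$), whose Jacobian equals $\tfrac14\cos^2(\theta/2)\ge\tfrac18$ on $0\le\theta\le\pi/2$, so the decay can be kept in whichever of $v_*,v'_*$ dominates, and one must check $\varepsilon$-uniformity throughout.

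Granted the kernel bound, I would integrate out $v_*$ via the elementary inequality \eqref{3.8}, in the form $\int_{\RR^3}|v-v_*|^{\gamma+1}e^{-c\langle v_*\rangle^2}(\cdot)_*\,dv_*\lesssim\langle v\rangle^{(\gamma+1)^+}\|\cdot\|_{L^2_v}$ (valid since $\gamma+1>-3$, which holds as $\gamma>-3/2$), performing the unit-Jacobian change $(v,v_*,\sigma)\to(v',v'_*,\sigma')$ to handle the $e^{-c\langle v'_*\rangle^2}$ term, and using $\langle v\rangle\le\langle v'\rangle\langle v'_*\rangle$ from energy conservation to redistribute the outer weight $W_{\ell-1}$; the constraints $2s+\gamma<1$ and $\ell>5/2$ are what keep all the resulting velocity weights inside the $H^{k,\ell}_{\it ul}$ budget, exactly as in the proof of Lemma~\ref{lem3.2}. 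Finally I would use Cauchy--Schwarz in $v$ together with the Sobolev embedding $H^2_x\hookrightarrow L^\infty_x$ (available since $k>3$) to split the $x$-dependence between $F$ and $G$, putting the two $x$-derivatives on whichever of $F,G$ carries at most two --- according to the sub-cases $|\alpha_2+\beta_2|\le2$ and $|\alpha_2+\beta_2|>2$, so that the other factor has at most $k$ derivatives --- and then take the supremum over $a\in\RR^3$, using that $\phi_1,\phi_2$ define equivalent $H^{k,\ell}_{\it ul}$ norms, just as in Lemma~\ref{E-lemm3.111}. This produces $\|W_{-1}A_2\|_{L^2(\RR^6)}\lesssim\|g^\varepsilon\|^2_{H^{k,\ell}_{\it ul}(\RR^6)}$, and hence the lemma.
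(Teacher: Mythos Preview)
Your overall strategy is sound, and the reduction to bounding $\|W_{-1}A_2\|_{L^2}$ is fine, but the step you yourself flag as ``the main obstacle'' is in fact a genuine gap. The pointwise kernel bound
\[
\sup_{w\in[v_*,v'_*]}e^{-\tilde\rho\langle w\rangle^2}\ \lesssim\ e^{-c\langle v_*\rangle^2}+e^{-c\langle v'_*\rangle^2}
\]
is \emph{false}: the segment $[v_*,v'_*]$ can pass through (or near) the origin while both endpoints sit at distance $\sim R$. For a concrete collision configuration, take $v_*=(R,0,0)$, $v=(-R,2R,0)$, $\sigma=(2R,2R,0)/|(2R,2R,0)|$; then $\theta=\pi/2$, $v'_*=(-R,0,0)$, and the midpoint of $[v_*,v'_*]$ is the origin. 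So $e^{-\tilde\rho\langle w\rangle^2}=1$ at $w=0$ while the right side is $2e^{-cR^2}$. Your proposed cure (the change of variable $v_*\mapsto v'_*$, keeping the decay ``in whichever endpoint dominates'') does not help here, because neither endpoint dominates: the failure is precisely that $|w|$ can be small while both $|v_*|$ and $|v'_*|$ are large. In short, one cannot extract \emph{both} a full power of $\theta$ \emph{and} honest Gaussian decay from $|M_*-M'_*|$ via the mean value theorem alone.

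The paper sidesteps this by never asking $M_*-M'_*$ to do both jobs at once. It first commutes $W_\ell$ inside, writing $W_\ell=(W_\ell-W'_\ell)+W'_\ell$, which splits $(A_2,h^{\alpha,\beta}_\ell)$ into $A_{21}+A_{22}$. In $A_{21}$ the factor $|W_\ell-W'_\ell|\lesssim\theta\,(W_\ell)'_*W'_\ell$ supplies the $\theta$-gain, while $|M_*-M'_*|\le|M_*|+|M'_*|\lesssim\mu_*^{1/2}+(\mu^{1/2})'_*$ supplies genuine Gaussian decay at an \emph{endpoint} (not an intermediate point); the two resulting pieces are handled by Cauchy--Schwarz and the change $(v,v_*)\to(v',v'_*)$ respectively. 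In $A_{22}$ the paper uses only the interpolated bound $|M_*-M'_*|\lesssim(\theta|v-v_*|)^{\lambda}$ with some $\lambda\in(2s,1)$ satisfying $\gamma+\lambda<1$ (possible exactly because $2s+\gamma<1$), so the angular integral still converges; the missing $v_*$-decay is then taken from $\|W_{\ell_0}F\|_{L^2_v}$ via $L^1_v\hookleftarrow L^2_{\ell_0,v}$ with $\ell_0>5/2$ --- this is where the hypothesis $\ell>5/2$ enters. If you want to salvage your route, one option is a second-order Taylor expansion of $M$ at $v_*$: the first-order term carries honest Gaussian decay at $v_*$ together with a $\theta$-gain, and the remainder carries a $\theta^2$-gain (ample for $s<1$) but no Gaussian, so it must be closed exactly as the paper closes $A_{22}$, using $F$ rather than $M$ for the $v_*$-integration.
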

\begin{proof}
Firstly, we write
\begin{align*}
A_2=&
W_\ell\iint_{\RR^3\times\SS^2}B(M_*-M'_*)F_*' G'dv_*d\sigma
\\&=\iint_{\RR^3\times\SS^2}B(W_{\ell}-W'_{\ell})(M_*-M'_*)F_*' G'dv_*d\sigma
\\&\quad +\iint_{\RR^3\times\SS^2}B(M_*-M'_*)F_*' (W_{\ell}G)'dv_*d\sigma
\end{align*}
and hence, putting $H=h^{\alpha,\beta}_{\ell}$,
\begin{align*}
|(A_2, h^{\alpha,\beta}_{\ell})|
&\le\iiiint_{\RR^9\times\SS^2}B|W_{\ell}-W_{\ell}'|\
|M_*-M'_*|\ |F_*'|\ |G|'|\ |H|dvdv_*d\sigma dx
\\&\quad +\iiiint_{\RR^9\times\SS^2}B|M_*-M'_*||F_*'||(W_{\ell}G)'|\ |H |dvdv_*d\sigma dx
\\&=A_{21}+A_{22}.
\end{align*}

If $\ell\ge 1$,
\begin{align*}
|W_{\ell}-W'_{\ell}|&\lesssim|v-v'||W_{\ell-1}+W_{\ell-1}'|
\lesssim|v'-v_*'|\theta|(W_{\ell-1})_*'+W_{\ell-1}'|
 \\&
\lesssim\theta|(W_{\ell})_*'+W_{\ell}'|\lesssim \theta(W_{\ell})_*'W_{\ell}',
\end{align*}
and knowing that $|M|\lesssim \mu(v)^{1/2}$, we get
\begin{align*}
 A_{21}&\le \iiiint_{\RR^9\times\SS^2}b(\cos\theta)
\theta|v-v_*|^{\gamma}\ \mu_*^{1/2}|(W_{\ell}F)_*' |\ |(W_{\ell}G)'|
 |H|dvdv_*d\sigma dx
\\&+
\iiiint_{\RR^9\times\SS^2}b(\cos\theta)
\theta|v-v_*|^{\gamma}\ |(\mu^{1/2}W_{\ell}F)_*' ||(W_{\ell}G)'|
 |H|dvdv_*d\sigma dx\\&= A_{211}+A_{212}.
\end{align*}

By the Schwarz inequality
\begin{align*}
 A_{211}^2\lesssim &
\iiiint_{\RR^9\times\SS^2}\theta^{-1-2s}|v-v_*|^{2\gamma}\mu_*|H|^2dvdv_*d\sigma dx
\\&\times \iiiint_{\RR^9\times\SS^2}\theta^{-1-2s}\ |(W_{\ell}F)_*' |^2\ |(W_{\ell}G)'|^2dvdv_*d\sigma dx
\\&=J_{1}J_{2}.
\end{align*}
Using \eqref{3.8}yields

\begin{align*}
J_{1}&\lesssim \iint \langle v\rangle^{2\gamma}|H|^2dvdx
\lesssim \|W_\gamma H\|^2= \|h^{\alpha,\beta}_{\ell+\gamma}\|^2\le
\|h^{\alpha,\beta}_{\ell+1}\|^2,
\\
\intertext{while by the change of variables $(v,v_*)\to (v',v_*')$}
J_{2}&\lesssim
\iint|(W_{\ell}F)_*' |^2\ |(W_{\ell}G)'|^2dvdv_*dx
\\&=\int \|W_{\ell}F\|^2_{L^2_v}
\ \|W_{\ell}G\|^2_{L^2_v}dx
\\&=\int \|W_{\ell}\phi_2(x-a)\partial^{\alpha_2}_{\beta_2}
 g^\varepsilon\|^2_{L^2_v}
\ \|h^{\alpha,\beta}_{\ell}\|_{L^2_v}^2dx
.
\end{align*}
Then, if $|\alpha_1+\beta_1|\le 2$,
\begin{align*}
J_{2}\lesssim
\|W_{\ell}\phi_2(x-a)\partial^{\alpha_2}_{\beta_2}
 g^\varepsilon\|^2
\ \|h^{\alpha,\beta}_{\ell}\|_{H^2_x(L^2_v)}^2
\lesssim \|g^\varepsilon\|_{H^{k,\ell}_{\it ul}(\RR^3)}^4,
\end{align*}
while if $|\alpha_1+\beta_1|> 2$ so that $|\alpha_2+\beta_2|\le k-|\alpha_1+\beta_1|\le k-2$,
\begin{align*}
J_{2}\lesssim
\|W_{\ell}\phi_2(x-a)\partial^{\alpha_2}_{\beta_2}
 g^\varepsilon\|_{H^2_x(L^2_v)}^2
\ \|h^{\alpha,\beta}_{\ell}\|^2
\lesssim \|g^\varepsilon\|_{H^{k,\ell}_{\it ul}(\RR^3)}^4
.
\end{align*}
In conclusion, we obtained
\begin{align*}
 A_{211}^2\lesssim &\|g^\varepsilon\|_{H^{k,\ell}_{\it ul}(\RR^6)}^2\|h^{\alpha,\beta}_{\ell+1}\|.
 \end{align*}

 We turn to $A_{212}$.
Notice
by the change of variables  $(v,v_*)\to (v',v_*')$ and by the 
Cauchy-Schwarz inequality and \eqref{3.8}
that for $\gamma>-3/2$,
\begin{align*}
A_{212}\lesssim&
\iiiint_{\RR^9\times\SS^2}\theta^{-1-2s}
|v'-v_*'|^{\gamma}\ |(\mu^{1/2}W_{\ell}F)_*' ||(W_{\ell}G)'|
 |H|dvdv_*d\sigma dx
 \\&=\iiiint_{\RR^6\times\SS^2}\theta^{-1-2s}
|v-v_*|^{\gamma}\ |(\mu^{1/2}W_{\ell}F)_* |
\ |(W_{\ell}G)|
 |H'|dvdv_*d\sigma dx
 \\&=\iiint_{\RR^9\times\SS^2}\theta^{-1-2s}\Big(\int
|v-v_*|^{\gamma}\ |(\mu^{1/2}W_{\ell}F)_* |dv_*\Big)
\ |(W_{\ell}G)|
 |H'|dvd\sigma dx
\\&\lesssim
\int \| F\|_{L^2_v}
\Big\{\iint_{\RR^3\times\SS^2}\theta^{-1-2s}\langle v\rangle^{\gamma}
|(W_{\ell}G)|
 |H'|dvd\sigma\Big\} dx.
 \end{align*}
By the regular change of variable $v\to v'$,
\begin{align*}
A_{212}&\lesssim \int \| \phi_2(x-a)\partial^{\alpha_2}_{\beta_2}
 g^\varepsilon\|_{L^2_v}\|h^{\alpha_1, \beta_1}_{\ell+\gamma}\|_{L^2_v}
\|h^{\alpha, \beta}_{\ell}\|_{L^2_v}dx,
\end{align*}
the last integral of which is bounded, by the Sobolev embedding,  by
\begin{align*}
 \| \phi_2(x-a)\partial^{\alpha_2}_{\beta_2}
 g^\varepsilon\|_{H^2_x(L^2_v)}\|h^{\alpha_1, \beta_1}_{\ell+\gamma}\|\
\|h^{\alpha, \beta}_{\ell}\|
\lesssim \|g^\varepsilon\|^2_{H^{k,\ell}_{\it ul}(\RR^6)}\|h^{\alpha_1, \beta_1}_{\ell+\gamma}\|
\end{align*}
when $|\alpha_2+\beta_2|\le 2$, and by
\begin{align*}
 \| \phi_2(x-a)\partial^{\alpha_2}_{\beta_2}&
 g^\varepsilon\|\ \|h^{\alpha_1, \beta_1}_{\ell+\gamma}\|_{H^2_x(L^2_v})\
\|h^{\alpha, \beta}_{\ell}\|
\lesssim \|g^\varepsilon\|^2_{H^{k,\ell}_{\it ul}(\RR^6)}
\sum_{|\alpha'+ \beta'|\le k}\|h^{\alpha', \beta'}_{\ell+\gamma}\|
\end{align*}
when $|\alpha_2+\beta_2|> 2$ for which $|\alpha_1+\beta_1|\le k-|\alpha_2+\beta_2|\le
k-2
$.
Thus we obtained
\begin{align*}
|A_{21}|\lesssim \|g^\varepsilon\|^2_{H^{k,\ell}_{\it ul}(\RR^6)}
\sum_{|\alpha'+ \beta'|\le k}\|h^{\alpha', \beta'}_{\ell+\gamma}\|.
\end{align*}

In remains to evaluate $A_{22}$. First,  an interpolation of the  Taylor formula
and the boundedness $|M|\le C$ yields
\begin{align*}
|M_*-M'_*| \le C|v_*-v_*'|^\lambda\le C\theta^\lambda
|v-v_*|^\lambda \le C\theta^\lambda |v'-v'_*|^\lambda.\quad
\end{align*}
Since $s\in(0,1/2)$ and $\gamma+2s<1$ are assumed, there is $\lambda\in(0,1)$ such that
 $\lambda>2s, \gamma+\lambda< 1$. Therefore
after the change of variable $(v,v_*)\to (v',v_*)$,
\begin{align*}
A_{22}&= \iiiint_{\RR^9\times\SS^2}B|M_*-M'_*||F_*'||(W_{\ell}G)'|\ |H |dvdv_*d\sigma dx
\\&\lesssim
 \iiiint_{\RR^9\times\SS^2}\theta^{-2-2s+\lambda}|v'-v'_*|^{\gamma+\lambda}
|F_*'||(W_{\ell} G)'|
|H|dvdv_*d\sigma dx
\\&=
\iiiint_{\RR^9\times\SS^2}\theta^{-2-2s+\lambda}|v-v_*|^{\gamma+\lambda}
|F_*||(W_{\ell} G)|
|H'|dvdv_*d\sigma dx
\end{align*}
We shall check the two cases.

(1) The case $\gamma+\lambda\ge 0$:
Then, noting $|v-v_*|^{\gamma+\lambda}\lesssim \langle v\rangle^{\gamma+\lambda}
\langle v_*\rangle^{\gamma+\lambda}$ and
using the regular change of variable $v\to v'$,
\begin{align*}
A_{22}&\lesssim
\iiiint_{\RR^9\times\SS^2}\theta^{-2-2s+\lambda}
|(W_{\gamma+\lambda} F)_*|\
|W_{\ell+\gamma+\lambda} G|
|H'|dvdv_*d\sigma dx.
 \\&\lesssim\int \|W_1 F\|_{L^1_v}\Big\{\iint_{\RR^3\times\SS^2}\theta^{-2-2s+\lambda}
\ |W_{\ell+1} G|
 |H'|dvd\sigma\Big\} dx
 \\&\lesssim\int
\|W_{\ell_0} F\|_{L^2_v}\|W_{\ell+1} G\|_{L^2_v}
 \|H\|_{L^2_v} dx
 \\&=\int
\|W_{\ell_0}\phi_2(x-a)\partial^{\alpha_2}_{\beta_2}
 g^\varepsilon\|_{L^2_v}
\|h^{\alpha_1,\beta_1}_{\ell+1}\|_{L^2_v}\|h^{\alpha,\beta}_{\ell}\|_{L^2_v}dx
 \end{align*}
for $\ell_0>1+3/2=5/2$.
By the Sobolev embedding applied separately as before to the 
cases $|\alpha_1+\beta_1|\le 2$ and $|\alpha_1+\beta_1|>  2$,
We finally obtain
\[
A_{22}\lesssim \|g^\varepsilon_{\ell}\|_{H^{k,\ell}_{\it ul}}^2
\sum_{|\alpha'+ \beta'|\le k}\|h^{\alpha', \beta'}_{\ell+1}\|
.
\]

(2) The case $\gamma+\lambda< 0$:
We shall  use the following split of integral,
\begin{align*}
A_{22}&\lesssim
\iiiint_{\RR^9\times\SS^2}\theta^{-2-2s+\lambda}|v-v_*|^{\gamma+\lambda}
|F_*||(W_{\ell} G)|
 |H'|dvdv_*d\sigma dx
 \\&=\iiiint_{|v-v_*|\ge 1}+\iiiint_{|v-v_*|< 1}=A_{221}+A_{222}.
 \end{align*}
The integral $A_{221}$, since   $|v-v_*|^{\gamma+\lambda}\le 1$ holds,
can be reduced to a special case of (1) with $\gamma+\lambda=0$, implying
\begin{align*}
A_{221}&\lesssim
\iiiint_{\RR^9\times\SS^2}\theta^{-2-2s+\lambda}
|F_*||(W_{\ell} G)|
 |H'|dvdv_*d\sigma dx
 \\&\lesssim\int \| F\|_{L^1_v}\Big\{\iint_{\RR^3\times\SS^2}\theta^{-2-2s+\lambda}
\ |W_{\ell} G|
 |H'|dvd\sigma\Big\} dx
 \\&\lesssim
\int
\|W_{\ell_1}\phi_2(x-a)\partial^{\alpha_2}_{\beta_2}
 g^\varepsilon\|_{L^2_v}
\|h^{\alpha_1,\beta_1}_{\ell}\|_{L^2_v}\|h^{\alpha,\beta}_{\ell}\|_{L^2_v}dx
\\&\lesssim
\int
\|W_{\ell_1}\phi_2(x-a)\partial^{\alpha_2}_{\beta_2}
 g^\varepsilon\|_{L^2_v}
\|h^{\alpha_1,\beta_1}_{\ell}\|_{L^2_v}\|h^{\alpha,\beta}_{\ell+1}\|_{L^2_v}dx
 \end{align*}
 for $\ell_1>3/2$,
 and hence by the Sobolev embedding,
\[
A_{221}
\lesssim \|g^\varepsilon_{\ell}\|_{H^{k,\ell}_{\it ul}}^2
\|h^{\alpha, \beta}_{\ell+1}\|
.
\]
On the other hand
\begin{align*}
A_{222}&\lesssim \iiiint_{\RR^9\times\SS^2, |v-v_*|<1}\theta^{-2-2s+\lambda}|v-v_*|^{\gamma+\lambda}
|F_*||(W_{\ell} G)|
 |H'|dvdv_*d\sigma dx
 \\&\lesssim
 \iiiint_{\RR^6\times\SS^2}\theta^{-2-2s+\lambda}
\Big\{\int_{\RR^3, |v-v_*|<1}|v-v_*|^{\gamma+\lambda}
|F_*|dv_*\Big\}
|(W_{\ell} G)|
 |H'|dvd\sigma dx.
\end{align*}
Clearly, if  $\gamma+\lambda>-3/2$, by the Cauchy-Schwarz inequality,
\begin{align*}
\int_{\RR^3, |v-v_*|<1}&|v-v_*|^{\gamma+\lambda}
|F_*|dv_*
\\&\le \Big(\int_{\RR^3, |v-v_*|<1}|v-v_*|^{2(\gamma+\lambda)}dv_*\Big)^{1/2}
\|F\|_{L^1_v}
\lesssim  \|F\|_{L^1_v},
\end{align*}
so that we get again by the Sobolev embedding

\begin{align*}
 A_{222}&
 \lesssim
  \iiiint_{\RR^6\times\SS^2}\theta^{-2-2s+\lambda}
 |F|_{L^1_v}
 |W_{\ell} G|
  |H'|dvd\sigma dx
\\&\lesssim
  \int |W_{\ell_1}F|_{L^2_v}
 |W_{\ell} G|_{L^2_v}
  |H|_{L^2_v}dx
  \\&\le
\int
\|W_{\ell_1}\phi_2(x-a)\partial^{\alpha_2}_{\beta_2}
 g^\varepsilon\|_{L^2_v}
\|h^{\alpha_1,\beta_1}_{\ell}\|_{L^2_v}\|h^{\alpha,\beta}_{\ell}\|_{L^2_v}dx
\\&\le
\int
\|W_{\ell_1}\phi_2(x-a)\partial^{\alpha_2}_{\beta_2}
 g^\varepsilon\|_{L^2_v}
\|h^{\alpha_1,\beta_1}_{\ell}\|_{L^2_v}\|h^{\alpha,\beta}_{\ell+1}\|_{L^2_v}dx
\\&
\lesssim \|g^\varepsilon_{\ell}\|_{H^{k,\ell}_{\it ul}}^2
\|h^{\alpha, \beta}_{\ell+1}\|
.
\end{align*}
Consequently, we have for both positive and negative  $\gamma+\lambda$
\[
A_{22}\lesssim \|g^\varepsilon_{\ell}\|_{H^{k,\ell}_{\it ul}}^2
\sum_{|\alpha'+ \beta'|\le k}\|h^{\alpha', \beta'}_{\ell+1}\|.
\]
Combine the above estimates to conclude the proof of Lemma \ref{lem3.4}.
\end{proof}
This completes the proof of the part of Lemma \ref{lem410} for $\Xi_1$.

\subsection{Estimate of $\Xi_2$.} Since $|\nabla \phi_1(x)|\le C\phi_2(x)$, we have
\begin{align*}
\Xi_2&=[ \phi_1(x-a)W_\ell\partial^\alpha_\beta, v\cdot\nabla_x]g^\varepsilon
\\&=\phi_1(x-a)W_{\ell}\nabla_x\pa^\alpha_{\beta-1}g^\varepsilon-
(v\cdot\nabla_x\phi_1(x-a))||W_{\ell}\pa^\alpha_{\beta}g^\varepsilon,
\\|\Xi_2|&\le\phi_1(x-a)W_{\ell}|\nabla_x\pa^\alpha_{\beta-1}g^\varepsilon|
+\phi_2(x-a)W_{\ell+1}|\pa^\alpha_{\beta}g^\varepsilon|,
\end{align*}
whence
\begin{align*}
|(\Xi_2, h^{\alpha,\beta}_\ell)|&\le( \|\phi_1(x-a)W_{\ell-1}\nabla_x\pa^\alpha_{\beta-1}g^\varepsilon\|+
 \|\phi_2(x-a)W_{\ell}\pa^\alpha_{\beta}g^\varepsilon\|)\|h^{\alpha,\beta}_{\ell+1}\|
 \\&\lesssim
 \|g^\varepsilon\|_{H^{k,\ell}_{\it ul}}\|h^{\alpha,\beta}_{\ell+1}\|,
\end{align*}
which implies Lemma \ref{lem410} for $\Xi_2$.

\subsection{Estimate of $\Xi_3$.} It holds that
\begin{align*}
|\Xi_3|&\le |
\kappa[ \phi_1(x-a)W_\ell\partial^\alpha_\beta, \langle v\rangle^2]g^\varepsilon|
\\&\lesssim
\kappa\phi_1(x-a)W_\ell(|v||\pa^{\alpha}_{\beta-1}g^\varepsilon|
+|\pa^{\alpha}_{\beta-2}g^\varepsilon|
\end{align*}
whence follows
\begin{align*}
|(\Xi_3, h^{\alpha,\beta}_{\ell})|&\lesssim \kappa
(\|\phi_1(x-a)W_{\ell-1}|v||\pa^{\alpha}_{\beta-1}g^\varepsilon\|
+\|\phi_1(x-a)W_{\ell-1}\pa^{\alpha}_{\beta-2}g^\varepsilon\|)\
\|h^{\alpha,\beta}_{\ell+1}\|
\\&\lesssim
\kappa\|g^\varepsilon\|_{H^{k,\ell}_{\it ul}}\|h^{\alpha,\beta}_{\ell+1}\|.
\end{align*}
This proves the part of Lemma \ref{lem410} for $\Xi_3$, and hence we are done.

\bigskip
\noindent
{\bf Acknowledgements :}
The research of the first author was supported in part by Zhiyuan foundation and Shanghai Jiao Tong University. The research of the second author was
supported by  Grant-in-Aid for Scientific Research No.22540187,
Japan Society of the Promotion of Science.
The last author's research was supported by the General Research
Fund of Hong Kong,
 CityU No.103109, and the Lou Jia Shan Scholarship programme of
Wuhan University.
 Finally the authors would like to
thank the financial support of City University of Hong Kong, Kyoto
University and Wuhan University during each of their stays, mainly starting {}from 2006. These supports have enable the final conclusion through our previous papers.

\smallskip

\end{document}